
\documentclass[12pt]{amsart}

\usepackage{amsfonts,amsmath,amssymb,mathrsfs,amsthm}
\usepackage[dvips]{graphicx,color}
\usepackage[all]{xy}

\oddsidemargin = 0in
\evensidemargin =0in
\textwidth = 6.5in
\topmargin = 0in
\textheight = 8.5in

\newtheorem{theorem}{Theorem}
\newtheorem{proposition}[theorem]{Proposition}

\newtheorem{corollary}[theorem]{Corollary}

\theoremstyle{definition}

\newtheorem{example}[theorem]{Example}

\theoremstyle{remark}
\newtheorem{remark}[theorem]{Remark}

\numberwithin{theorem}{section}
\numberwithin{equation}{section}

\newcommand{\st}{\mathrm{st}}          
\newcommand{\Des}{D}
\newcommand{\Dset}{\operatorname{Des}}

\newcommand{\perm}{\mathfrak{S}}

\newcommand{\pair}[2]{\left\langle #1~|~#2\right\rangle}

\newcommand{\prob}{\mathrm{Prob}}


\newcommand{\comp}{\vDash}              
\newcommand{\Comp}{\operatorname{Comp}}
\newcommand{\co}{\operatorname{co}}

\newcommand{\End}{\mathrm{End}}



\newcommand{\eps}{\lambda}
\newcommand{\Qmap}{\Phi}

\newcommand{\Q}{\mathbb{Q}}
\newcommand{\R}{\mathbb{R}}
\newcommand{\calQ}{\mathcal{Q}}

\newcommand{\QSym}{\calQ}

\newcommand{\Sol}{\mathcal{D}}
\newcommand{\dhat}{\widehat{\Sol}}

\def\field{\Q}

\newcommand{\QSf}{\prob}
\newcommand{\Kbar}{\overline{K}}

\newcommand{\rank}{\operatorname{rank}}


\begin{document}
 \title[Random walks on quasisymmetric functions]{
Random walks on quasisymmetric functions}

\author{Patricia Hersh}
\author{Samuel K. Hsiao}

\date{September 10, 2007}

\address{Department of Mathematics\\
Indiana University\\
Bloomington, IN 47405, USA}
\email{phersh@indiana.edu}

\address{Mathematics Program\\
Bard College\\
Annandale-on-Hudson, NY 12504, USA}
\email{hsiao@bard.edu}

\thanks{The first author was supported in part by NSF grant 0500638.  The second author
was supported in part by an NSF Postdoctoral Research Fellowship}
 

\begin{abstract} 
Conditions are provided under which an endomorphism on quasisymmetric functions gives rise to a left random walk on the descent algebra which is also a lumping of a left random walk on permutations.  Spectral results are also obtained. Several well-studied random walks are now realized this way: Stanley's $QS$-distribution results from endomorphisms given by evaluation maps, $a$-shuffles result from the $a$-th convolution power of the universal character, and the Tchebyshev operator of the second kind introduced recently by Ehrenborg and Readdy yields traditional riffle shuffles. A conjecture of Ehrenborg regarding the spectra for a family of random walks on $ab$-words is proven. A theorem of Stembridge from the theory of enriched $P$-partitions is also recovered as a special case.
\end{abstract}

\maketitle

\section{Introduction}

Quasisymmetric functions have long been used for encoding and manipulating enumerative combinatorial data. They admit a natural graded Hopf algebra structure $\QSym = \bigoplus_{n=0}^\infty \QSym_n$ central to the study of combinatorial Hopf algebras \cite{ABS}.  Their dual relationship to noncommutative symmetric functions \cite{NC1}, as well as to Solomon's descent algebras \cite{Sol,G,MR}, are an important part of the story 
and have inspired a broad literature.

A major goal of this paper is to identify and develop bridges between some of this literature and the body of work surrounding Bidigare, Hanlon, and Rockmore's far-reaching generalizations of Markov chains for various common shuffling and sorting schemes, the main references being \cite{BD, BHR, Brown, Brown2, BrD}.  Stanley first recognized and established a connection between quasisymmetric functions and this work in \cite{QS}.  We deepen this connection and draw combinatorial Hopf algebras into the story by  proving that endomorphisms on quasisymmetric functions which satisfy certain nonnegativity requirements always give rise to random walks both on permutations and also on the descent algebra.  Stanley's $QS$-distribution may be regarded as the special case where the endomorphism corresponds to an evaluation map. Some consequences of this relationship are as follows: (1) whenever a random walk on permutations or the descent algebra arises this way, its transition matrix will be lower triangular with respect to the monomial basis of quasisymmetric functions;  (2) these transition matrices often turn out to be diagonalizable with respect to bases that are quite natural from the viewpoint of quasisymmetric functions;  (3) in some cases, one may directly transfer known spectral results from one setting to the other.   

Our original motivation was to understand the probabilistic behavior of an important endomorphism $\Theta:\QSym\to\QSym$ introduced by Stembridge \cite{Ste} in his development of enriched $P$-partitions. Billera, Hsiao, and van Willigenburg observed \cite{BHW} that $\Theta$ can be represented by a stochastic matrix (after normalization) with unique stationary distribution equal to the distribution of peak sets of random permutations. The associated random walk on peak sets was conjectured to specialize a family of random walks on permutations having uniform stationary distribution, a conjecture which turns out to be correct. 

It was soon pointed out \cite{ABS, BHT} that the $\Theta$-map is dual to a specialization at $q=-1$ of the $A \mapsto (1-q) A$ transformation on noncommutative symmetric functions introduced by Krob, Leclerc, and Thibon  \cite{NC2}. They develop a series of general results about these transformations that include a complete description of their spectral decompositions. They also describe these transformations as acting by multiplication on Solomon's descent algebra, and based on their description it is easy to resolve the conjecture made in \cite{BHW}. The dual relationship between $\Theta$ and a multiplicative operator on the descent algebra turns out to be one instance of a much more general phenomenon, which we develop in Section~\ref{S:random-walks}.

Section ~\ref{bg-section} briefly gives background on quasisymmetric functions, the descent algebra, noncommutative symmetric  functions and lumping of random walks. Section~\ref{S:random-walks} draws together results about characters on combinatorial Hopf algebras and makes some new observations so as  to  characterize  which endomorphisms on quasisymmetric functions give rise to Markov chains. In addition, a characterization of when the stationary distribution  is unique (in which case it is uniform) is given.  In Section~\ref{recursive-section}, it is then shown how in this setting one may read off the eigenvalues, and in fact an eigenbasis is constructed by a  recursive procedure.  Section ~\ref{explicit-matrix-section} describes the resulting transition matrices quite explicitly.

Turning now to applications, Section~\ref{QS-section}  expresses Stanley's $QS$-distribution as the special  case of random walks driven by the endomorphism on $\QSym $ resulting from evaluating the  quasisymmetric functions at a specified  point $(r_1,r_2,\dots )$ where each $r_i$ is a real number.  Section ~\ref{a-section}  deals with the well-studied $a$-shuffles where a deck of cards is split into $a$ (possibly empty) piles which are then shuffled; this random walk results from a very natural endomorphism on $\QSym$, namely the $a$-th convolution power of the universal character.  Connecting all this to the literature on enumeration in posets, we show that Ehrenborg and Readdy's Tchebyshev operator of the second kind on $\QSym$ encodes standard riffle shuffles \cite{ER}, and in Section~\ref{S:ab} we prove a conjecture of Ehrenborg regarding the spectra for a certain family of random walks on $ab$-words that arise from the $r$-Birkoff transform \cite{E04}. In fact, we explicitly describe the transition probabilities of these walks in a way that generalizes a theorem of Stembridge from \cite{Ste}.

For an overview of the literature on shuffling and related topics, the survey by Diaconis \cite{Diaconis} is a very helpful resource. We also recommend \cite{Swe} and \cite{ABS, AM} for further background on Hopf algebras and on combinatorial Hopf algebras, respectively.

%
%

\section{Background}\label{bg-section}

This section reviews background on quasisymmetric functions, noncommutative symmetric functions, and the descent algebra, including bases and products to be used in later sections, as well as a key property of the resulting Markov chains called lumping.  We work over the rational numbers $\field$, although most of our results hold over any field of characteristic $0$. 

\subsection{Quasisymmetric functions} Let $x_1, x_2, \ldots$ be an ordered list of variables. Let $n\ge 0$ and $\alpha = (a_1, \ldots, a_k)$ be a {\it composition} of $n$, that is, a sequence of positive integers that sums to $|\alpha| = n$. The abbreviated notation $\alpha = a_1 \ldots a_k$ will often be used. The {\em monomial  quasisymmetric function} indexed by $\alpha$  is the formal power series
\[ M_\alpha = \sum_{i_1 < \cdots < i_k} x_{i_1}^{a_1}\cdots x_{i_k}^{a_k}. \]
Any linear combination (over $\field$) of monomial quasisymmetric functions is called a {\em quasisymmetric function.}  In order for a formal power series to be a quasisymmetric function, notice that for any composition $\alpha $ and any  two monomials $x_{i_1}^{a_1}x_{i_2}^{a_2} \cdots x_{i_k}^{a_k}$ and $x_{j_1}^{a_1}x_{j_2}^{a_2}\cdots x_{j_k}^{a_k }$ with $i_1 < \cdots < i_k$ and $j_1 < \cdots < j_k $, these monomials must have the same coefficient in the quasisymmetric function; thus, quasisymmetric functions are indexed by the compositions of integers in exactly the way that symmetric functions are indexed by number partitions. They were introduced by Gessel  as generating functions for weights of $P$-partitions \cite{G}.

Let $\Comp(n)$ denote the set of compositions of $n$ and  $\QSym_n$ denote the linear span of $\{M_\alpha\}_{\alpha\in\Comp(n)}$. The vector space $\QSym = \bigoplus_{n\ge 0} \QSym_n$ of quasisymmetric functions has the structure of a graded Hopf algebra: The product is ordinary multiplication of power series and the coproduct is defined on a monomial function by
\[ \Delta_\calQ(M_\alpha) = \sum_{\alpha = \beta \cdot \gamma} M_\beta \otimes M_\gamma,\]
the sum being over all ways of writing $\alpha$ as the concatenation of two (possibly empty) compositions $\beta$ and $\gamma$. The set of (Hopf algebra) endomorphism on $\QSym$ is denoted $\End(\QSym)$.

Define a partial order on $\Comp(n)$ by setting $\alpha \le \beta$ if $\beta$ is a refinement of $\alpha$; that is, $\beta$ is the concatenation of compositions $\beta = \beta_1 \cdots \beta_k$ such that $\alpha = (|\beta_1|, \ldots ,|\beta_k|)$. Given $\alpha = (a_1, \ldots , a_k) \in \Comp(n)$ let us define $S_\alpha\subseteq [n-1]$ by  $S_\alpha = \{ a_1, a_1 + a_2, \ldots, a_1 + \cdots + a_k\}$. The correspondence $\alpha\mapsto S_\alpha$ is an isomorphism of posets between $\Comp(n)$ and the set of subsets of $[n-1]$ under inclusion. The composition corresponding to $S \subseteq [n-1]$ is denoted $\co(S)$. Thus, $\co(S_\alpha) = \alpha$. 

For $\alpha \in \Comp(n)$, define the {\it fundamental quasisymmetric function} $F_\alpha$ by
\begin{equation}\label{E:F-formula}
F_\alpha = \sum_{i_1 \le i_2 \le \cdots \le i_n \atop i_k \in S_{\alpha} \implies i_k < i_{k+1}} x_{i_1} \cdots x_{i_n} = \sum_{\beta\in\Comp(n): \; \beta \ge \alpha} M_\beta.
\end{equation}
By inclusion-exclusion,
\[ M_\alpha = \sum_{\beta\in\Comp(n): \; \beta \ge \alpha} (-1)^{\ell(\beta) - \ell(\alpha)} F_\beta, \]
where $\ell(\alpha)$ denotes the number of parts, or {\it length,} of $\alpha$. Thus $\{F_\alpha\}$ is a basis for $\QSym$.

See \cite{EC2, MR} for further background on quasisymmetric functions.

\subsection{Descent algebra}

For $n\ge 0$, let $\perm_n$ denote the set of permutations of $[n]$. A permutation $\sigma \in \perm_n$ will be represented as a sequence $\sigma = (\sigma_1, \ldots, \sigma_n)$, where $\sigma_i = \sigma(i)$. 
The descent set of $\sigma$ is  defined by $\Dset(\sigma) = \{ i \in [n-1] : \sigma_i > \sigma_{i+1}\}$. The {\em descent composition} of $\sigma$ is defined by $\Des(\sigma) = \co(\Dset(\sigma))$. The set of permutations that have the same descent composition is called a {\it descent class.}

For $\alpha \in \Comp(n)$, define $Y_\alpha\in\Sol_n$ and $X_\alpha\in\Sol_n$ by
\begin{equation}
 Y_\alpha = \sum_{\sigma \in \perm_n: \Des(\sigma) = \alpha} \sigma
 \end{equation}
\begin{equation}\label{E:Xdef}
X_\alpha = \sum_{\beta\in\Comp(n):\beta\le\alpha} Y_\beta = \sum_{\sigma \in \perm_n: \Des(\sigma) \le \alpha} \sigma.
\end{equation}
A well-known result due to Solomon \cite{Sol} asserts that the vector space spanned by  $\{X_\alpha\}_{\alpha\in\Comp(n)}$ (or equivalently $\{Y_\alpha\}_{\alpha\in\Comp(n)}$) is a subalgebra of the group algebra $\field [\perm_n]$.
This subalgebra is called the {\em descent algebra} and we denote it by $\Sol_n$.

Let $\Sol = \bigoplus_{n=0}^\infty \Sol_n$ and $\dhat = \prod_{i=0}^\infty \Sol_i$. An element  $W \in \dhat$ can be represented uniquely as a 
formal series $W = \sum_{n \ge 0} W_n$, where $W_n \in \Sol_n$. 
From this viewpoint $\Sol$ is the subspace of $\dhat$ consisting of those formal series having only finitely many nonzero terms. We will think of $\dhat$ as an algebra with component-wise multiplication:
$\left( \sum W_n \right) \cdot \left(\sum V_n \right) = \sum W_n \cdot V_n.$
Note that the series $X = \sum_{n\ge 0} X_n$ is the identity element of $\dhat$.

Let $\pair{\cdot}{\cdot} : \dhat \times \QSym \to \field$
be the bilinear form defined for any pair of compositions $\alpha,\beta$ by
\[ \pair{X_\alpha}{M_\beta} = \delta_{\alpha,\beta}, \;\;\;\;\text{or equivalently}\;\;\;\;\;
\pair{Y_\alpha}{F_\beta} = \delta_{\alpha,\beta}. \]
We hereby use the pairing $\pair{\cdot}{\cdot}$ to identify
$\dhat$ with the dual vector space of $\QSym$, and $\Sol_n$ with the dual of $\QSym_n$.

\subsection{Noncommutative symmetric functions}
Define a new product $\star$ and coproduct $\Delta_\Sol$ on $\Sol$ by
\begin{equation} \label{E:X-mult}
 X_{\alpha} \star X_\beta = X_{\alpha \cdot \beta}
 \end{equation}
\[ \Delta_\Sol(X_n) = \sum_{ i + j = n} X_i \otimes X_j,\]
The definition of $\Delta_\Sol$ extends to every $X_\alpha$ by requiring $\Delta_\Sol(U\star V) = \Delta_\Sol(U) \star \Delta_\Sol(V)$. There is a natural isomorphism between $(\Sol,\star,\Delta_\Sol)$ and the graded Hopf algebra of noncommutative symmetric functions \cite{NC1}, in which $X_\alpha$ is mapped to the complete symmetric function $S^\alpha$.

The following result is part of Theorem~6.1 in \cite{NC1} (cf.\ \cite{MR}).

\begin{theorem}
$(\Sol,\star,\Delta_\Sol)$ is the graded dual Hopf algebra of $\QSym$ under the pairing $\pair{\cdot}{\cdot}$.
\end{theorem}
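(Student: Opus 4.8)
The plan is to verify directly that $\star$ and $\Delta_\Sol$ are the transposes, under the nondegenerate pairing $\pair{\cdot}{\cdot}$, of the coproduct $\Delta_\calQ$ and the product of $\QSym$ respectively. Since $\QSym$ is a graded connected Hopf algebra whose homogeneous components $\QSym_n$ are finite-dimensional, the graded dual $\bigoplus_{n\ge 0}\QSym_n^{*}$ carries a canonical graded Hopf algebra structure whose product, coproduct, unit, counit and antipode are the transposes of the coproduct, product, counit, unit and antipode of $\QSym$. As $\pair{\cdot}{\cdot}$ makes $\{X_\alpha\}$ and $\{M_\beta\}$ dual bases, it identifies $\Sol_n$ with $\QSym_n^{*}$ as a vector space, so the theorem reduces to showing that the maps $\star$ and $\Delta_\Sol$ postulated on $\Sol$ coincide with the transposed structure maps. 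The unit $X_\emptyset$ of $(\Sol,\star)$ satisfies $\pair{X_\emptyset}{M_\emptyset}=1$ and $\pair{X_\emptyset}{\QSym_n}=0$ for $n\ge 1$, so it is the transpose of the counit of $\QSym$; dually, the projection $\Sol\to\Sol_0=\field$ is the transpose of the unit of $\QSym$; and the antipodes then agree automatically (by uniqueness of the antipode on a connected graded bialgebra). So only two identities remain to be checked.

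First, $\star$ is transpose to $\Delta_\calQ$. Using that $\{X_\alpha\}$ and $\{M_\beta\}$ are dual bases,
\[ \pair{X_\alpha\star X_\beta}{M_\gamma}=\pair{X_{\alpha\cdot\beta}}{M_\gamma}=\delta_{\alpha\cdot\beta,\gamma}, \]
while $\pair{X_\alpha\otimes X_\beta}{\Delta_\calQ(M_\gamma)}=\sum_{\gamma=\mu\cdot\nu}\delta_{\alpha,\mu}\,\delta_{\beta,\nu}$, which also equals $\delta_{\alpha\cdot\beta,\gamma}$ since a composition $\gamma$ admits at most one factorization $\gamma=\mu\cdot\nu$ with $\mu$ of a prescribed length. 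Hence $\star=(\Delta_\calQ)^{*}$.

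Second, $\Delta_\Sol$ is transpose to the product $m$ of $\QSym$; write $m^{*}\colon\Sol\to\Sol\otimes\Sol$ for that transpose. Both $\Delta_\Sol$ and $m^{*}$ are algebra homomorphisms for the product $\star\otimes\star$ on $\Sol\otimes\Sol$ --- the former by the definition recalled above, and the latter because $\QSym$ is a bialgebra, so $m$ is a coalgebra map and its transpose is an algebra map. Now $(\Sol,\star)$ is generated as a unital algebra by the elements $X_n$, $n\ge 1$, since $X_\alpha=X_{a_1}\star\cdots\star X_{a_k}$; so it suffices to check $\Delta_\Sol(X_n)=m^{*}(X_n)$, i.e.\ $\pair{\Delta_\Sol(X_n)}{M_\beta\otimes M_\gamma}=\pair{X_n}{M_\beta M_\gamma}$ for all compositions $\beta,\gamma$. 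The left-hand side is $\sum_{i+j=n}\delta_{(i),\beta}\,\delta_{(j),\gamma}$, which equals $1$ exactly when $\beta=(i)$ and $\gamma=(j)$ with $i+j=n$. The right-hand side is the coefficient of $M_{(n)}$ in $M_\beta M_\gamma$; a monomial $x_i^{\,n}$ can occur in $M_\beta M_\gamma$ only when both $\beta$ and $\gamma$ are single parts (otherwise every monomial of $M_\beta$ or of $M_\gamma$ already involves two distinct variables), and then $M_{(i)}M_{(j)}=M_{(i,j)}+M_{(j,i)}+M_{(i+j)}$ contributes $M_{(n)}$ with coefficient $1$. The two sides agree, so $\Delta_\Sol=m^{*}$.

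The computations in the last two paragraphs are routine. The one point that needs care is the structural remark that $m^{*}$ is a homomorphism for $\star\otimes\star$ --- this is precisely where the bialgebra compatibility of $\QSym$ enters, and it is what permits reducing the verification of $\Delta_\Sol=m^{*}$ to the algebra generators $X_n$ --- together with the bookkeeping that $\Delta_\Sol(X_\alpha)$ and all the pairings stay inside the finite-dimensional graded pieces, so that no completion subtleties (the passage between $\Sol$ and $\dhat$) intervene.
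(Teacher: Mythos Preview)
The paper does not prove this theorem; it is quoted as a known result from \cite{NC1} (with a cross-reference to \cite{MR}) and no argument is supplied. Your proof is correct and self-contained: you verify on the dual bases $\{X_\alpha\},\{M_\beta\}$ that $\star$ transposes $\Delta_\calQ$, then reduce the identity $\Delta_\Sol=m^{*}$ to the algebra generators $X_n$ by using that both maps are $\star$-homomorphisms. The only step requiring care---that $m^{*}$ is multiplicative for $\star$---you correctly trace back to the bialgebra compatibility of $\QSym$ together with the already-established identification $\star=\Delta_\calQ^{*}$. The computation of the coefficient of $M_{(n)}$ in $M_\beta M_\gamma$ is accurate, including the boundary cases where one of $\beta,\gamma$ is empty.
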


Under the isomorphism between $(\Sol,\star,\Delta_\Sol)$ and the Hopf algebra of noncommutative symmetric functions, the usual product on $\Sol$ inherited from the group algebra is opposite to the internal product on noncommutative symmetric functions \cite[Section~5]{NC1}. The next result, which appears as Proposition~5.2 in \cite{NC1}, explicitly relates multiplication in the descent algebra to the product $\star$ and coproduct $\Delta_\Sol$ (and thereby also $\QSym$, by duality). 

\begin{proposition}\label{P:compat}
For $r\ge 2$ let $\Delta_\Sol^{r}$ be defined inductively by $\Delta^{2}_\Sol = \Delta_\Sol$ and $\Delta^{r}_\Sol = \Delta^{r-1}_\Sol\otimes I$, where $I$ is the identity operator on $\Sol$. For any $G, F_1, \ldots, F_r \in \Sol$, we have
\[ G \cdot (F_1 \star \cdots \star  F_r) =   \sum_G (G_{(1)} \cdot F_1) \star \cdots \star (G_{(r)} \cdot F_r).\]
where $\Delta_\Sol^{r}(G) = \sum_G G_{(1)} \otimes \cdots \otimes G_{(r)}$ in Sweedler notation.
\end{proposition}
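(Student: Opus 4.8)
The plan is to reduce the statement, using multilinearity and the coassociativity of $\Delta_\Sol$, to the special case in which each $F_i$ is a single complete function $X_{n_i}$, where it becomes a reformulation of Solomon's multiplication rule for the $X$-basis of $\Sol$.

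First, since $\cdot$, $\star$, and $\Delta_\Sol$ are all linear in each argument and $\{X_\alpha\}$ is a basis of $\Sol$, it suffices to prove the identity for $G=X_\gamma$ and $F_i=X_{\alpha^{(i)}}$. Writing $\alpha^{(i)}=(n^i_1,\dots,n^i_{\ell_i})$ and using $X_\mu\star X_\nu=X_{\mu\cdot\nu}$, the product $F_1\star\cdots\star F_r$ is an $L$-fold $\star$-product of single complete functions, with $L=\ell_1+\cdots+\ell_r$. If the single-row case is known in every arity, then expanding $G\cdot(F_1\star\cdots\star F_r)$ by the arity-$L$ version gives $\sum(G_{(1,1)}\cdot X_{n^1_1})\star\cdots\star(G_{(r,\ell_r)}\cdot X_{n^r_{\ell_r}})$, where $\Delta^L_\Sol(G)=\sum G_{(1,1)}\otimes\cdots\otimes G_{(r,\ell_r)}$; by coassociativity $\Delta^L_\Sol=(\Delta^{\ell_1}_\Sol\otimes\cdots\otimes\Delta^{\ell_r}_\Sol)\circ\Delta^r_\Sol$, and applying the single-row case once more (in arity $\ell_i$) to each $G_{(i)}$ reassembles the inner products as $G_{(i)}\cdot(X_{n^i_1}\star\cdots\star X_{n^i_{\ell_i}})=G_{(i)}\cdot F_i$. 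This deduces the general statement from the single-row one; the arity-$1$ case is trivial since $\Delta^1_\Sol$ is the identity.

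For the single-row case, fix $\gamma=(c_1,\dots,c_m)$ and $n_1,\dots,n_r\ge 0$. From $X_\gamma=X_{c_1}\star\cdots\star X_{c_m}$, the rule $\Delta_\Sol(X_c)=\sum_{a+b=c}X_a\otimes X_b$, and multiplicativity of $\Delta_\Sol$ with respect to $\star$, one gets $\Delta^r_\Sol(X_\gamma)=\sum X_{\gamma_{(1)}}\otimes\cdots\otimes X_{\gamma_{(r)}}$, where the sum runs over nonnegative integer $r\times m$ matrices $N$ with column sums $c_1,\dots,c_m$, and $\gamma_{(i)}$ is the $i$-th row of $N$ with its zero entries deleted. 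Selecting the tensor components of multidegree $(n_1,\dots,n_r)$ means requiring $N$ to have row sums $n_1,\dots,n_r$; since $X_{n_i}$ is the identity of $\Sol_{n_i}$ and $X_{\gamma_{(i)}}\cdot X_{n_i}$ vanishes unless $|\gamma_{(i)}|=n_i$, the right-hand side of the desired identity equals $\sum_N X_{\gamma_{(1)}}\star\cdots\star X_{\gamma_{(r)}}=\sum_N X_{\co(N)}$, where $\co(N)$ is the composition obtained by reading the entries of $N$ row by row, top to bottom, discarding zeros. This is precisely Solomon's product formula for $X_\gamma\cdot X_{(n_1,\dots,n_r)}$, completing the proof.

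The one external ingredient I would invoke, rather than prove from scratch, is Solomon's multiplication rule for the $X$-basis (equivalently, the combinatorial description of the structure constants of the descent algebra $\Sol_n$), as its proof requires a real argument with descents of products of coset representatives in $\perm_n$; see \cite{Sol, MR}. Beyond that, the only point needing care is the bookkeeping of zero parts when passing between the weak compositions indexing the summation in $\Delta_\Sol$ and the genuine compositions indexing the $X$-basis, but deleting a zero entry of an index corresponds bijectively to deleting a zero entry of the matrix $N$, so nothing is lost. Alternatively, one can first reduce to $r=2$ by induction on $r$, and then, with $G=X_\gamma$, $F_1=X_\mu$, $F_2=X_\nu$, verify the identity by matching the coefficient of each $X_\delta$ on the two sides via the bijection that cuts a matrix contributing to $X_\gamma\cdot X_{\mu\cdot\nu}$ into its first $\ell(\mu)$ and last $\ell(\nu)$ rows.
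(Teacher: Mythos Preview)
Your proof is correct. The paper does not actually supply its own argument for this proposition; it simply cites it as Proposition~5.2 of \cite{NC1}. Your approach---reducing by multilinearity to $G=X_\gamma$ and each $F_i$ a single $X_{n_i}$, computing $\Delta_\Sol^r(X_\gamma)$ via the matrix expansion, and then identifying the result with Solomon's product rule $X_\gamma\cdot X_{(n_1,\ldots,n_r)}=\sum_N X_{\co(N)}$---is essentially the standard proof and is what one finds in \cite{NC1} as well. The reduction from general $F_i$ to single-row $F_i$ via coassociativity is clean and not circular, since you only invoke the already-established single-row case (in various arities) when reassembling. The one small imprecision is in the paper's own inductive definition of $\Delta_\Sol^r$ (which should read $(\Delta_\Sol^{r-1}\otimes I)\circ\Delta_\Sol$ rather than $\Delta_\Sol^{r-1}\otimes I$), but you use the correct coassociative iterate, so this does not affect your argument.
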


\subsection{Lumping of random walks}

Suppose that $K$ is the transition probability matrix for a Markov chain with state space $\perm_n$. Suppose also that $K$ has the property that
\begin{equation} \label{E:lumpable}
 \sum_{\sigma\in\perm_n: \Des(\sigma) = \alpha} K(\pi,\sigma) = \sum_{\sigma\in\perm_n: \Des(\sigma) = \alpha} K(\tau,\sigma)
 \end{equation}
for any $\alpha\in\Comp(n)$ and $\pi,\tau\in\perm_n$ such that $\Des(\pi) = \Des(\tau)$. In this case we may define a new Markov chain with state space $\Comp(n)$ and transition probability matrix $\Kbar$ given by 
\begin{equation}\label{E:lump-def}
\Kbar(\Des(\pi),\beta) = \sum_{\sigma\in\perm_n : \Des(\sigma) = \beta} K(\pi, \sigma).
\end{equation}
In other words, $\Kbar(\alpha,\beta)$ is the probability that a permutation with descent composition $\alpha$ moves to some permutation with descent composition $\beta$ in one step of the original Markov chain. We shall say that $\Kbar$ {\em lumps $K$ by descent sets}. Lumping is discussed in \S6.3 in Kemeny and Snell's book \cite{KS}.

Lumping by descent sets occurs when a random walk on $\perm_n$ is driven by a probability distribution that is constant on descent classes, as we now explain. Let $W = \sum_{\sigma\in\perm_n} W(\sigma) \sigma$ be a probability distribution on $\perm_n$ such that $W \in \Sol_n$. Consider the Markov chain with state space $\perm_n$ and transition probability matrix given by $K(\pi, \sigma \pi) = W(\sigma^{-1}).$ Then the expression on the left-hand side of \eqref{E:lumpable} represents the coefficient of $\pi$ in $W\cdot Y_\alpha$ while the right-hand side represents the coefficient of $\tau$. If $\Des(\pi) = \Des(\tau)$ then these coefficients are equal by the fact that $\Sol_n$ is a subalgebra of $\field[\perm_n]$. This means $K$ can be lumped by descent sets, and the column of $\Kbar^m$ indexed by $(n)$ encodes the $m$th convolution power of the distribution $W$:
\begin{equation}\label{E:Kbar-convolve}
 \Kbar^m(\Des(\pi), n) = W^{*m}(\pi).
 \end{equation}

Now we show how the eigenvalues and eigenvectors of the two matrices are related to each  other.

\begin{proposition}
Each eigenvector of $\Kbar$ 
gives rise to an eigenvector for $K$ with the same eigenvalue.
Moreover,  linearly independent eigenvectors are thereby sent to eigenvectors that 
remain linearly independent.
\end{proposition}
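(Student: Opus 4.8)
The plan is to make the correspondence completely explicit by pulling functions back along the descent-composition map $\Des\colon\perm_n\to\Comp(n)$. Regard an eigenvector of $\Kbar$ as a function $\bar v\colon\Comp(n)\to\field$ with $\sum_{\beta\in\Comp(n)}\Kbar(\alpha,\beta)\,\bar v(\beta)=\lambda\,\bar v(\alpha)$ for every $\alpha\in\Comp(n)$, and define $v\colon\perm_n\to\field$ by $v(\pi)=\bar v(\Des(\pi))$, i.e.\ $v$ is the function on permutations that is constant on each descent class with the prescribed values. The claim is that $v$ is an eigenvector of $K$ with the same eigenvalue $\lambda$, and that $\bar v\mapsto v$ preserves linear independence.

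The first step is the eigenvector computation. Fix $\pi\in\perm_n$ and group the sum $\sum_{\sigma\in\perm_n}K(\pi,\sigma)\,v(\sigma)$ according to the descent composition of $\sigma$: since $v(\sigma)=\bar v(\beta)$ whenever $\Des(\sigma)=\beta$, this sum equals $\sum_{\beta\in\Comp(n)}\bar v(\beta)\bigl(\sum_{\sigma:\,\Des(\sigma)=\beta}K(\pi,\sigma)\bigr)$, which by the definition of $\Kbar$ in \eqref{E:lump-def} is $\sum_{\beta\in\Comp(n)}\bar v(\beta)\,\Kbar(\Des(\pi),\beta)=\lambda\,\bar v(\Des(\pi))=\lambda\,v(\pi)$. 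Hence $Kv=\lambda v$. Note that this step uses nothing about $K$ beyond the lumpability hypothesis \eqref{E:lumpable}, which is exactly what makes $\Kbar$ well defined in the first place.

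The second step is linear independence. The assignment $\bar v\mapsto v$ is visibly linear, so it is enough to check that it is injective: if $v\equiv 0$ then $\bar v\equiv 0$. This is immediate because every composition $\alpha\in\Comp(n)$ occurs as $\Des(\pi)$ for some permutation $\pi$, so $\bar v(\alpha)=v(\pi)=0$. Consequently a nonzero $\bar v$ yields a genuinely nonzero eigenvector $v$, and given linearly independent eigenvectors $\bar v_1,\dots,\bar v_k$ of $\Kbar$, any relation $\sum_i c_i v_i=0$ restricts to $\sum_i c_i\bar v_i=0$ on the (surjective) image of $\Des$, forcing all $c_i=0$.

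I do not expect a real obstacle here: the entire content is the single regrouping computation in the first step, plus the standard fact that the descent-composition map is surjective. The only point that genuinely needs care is the convention for how $K$ and $\Kbar$ act on vectors — the argument above is the one adapted to right ("harmonic") eigenvectors, which is the version relevant to reading the spectrum of the lumped chain; I would state and prove the proposition in that form, since the corresponding statement for left eigenvectors would require redistributing mass within each descent class and does not follow from lumpability alone.
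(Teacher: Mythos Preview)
Your argument is correct. Lifting $\bar v$ to $v=\bar v\circ\Des$ (constant on descent classes) and regrouping $\sum_\sigma K(\pi,\sigma)\,v(\sigma)$ by the descent composition of $\sigma$ immediately gives $Kv=\lambda v$ via \eqref{E:lump-def}; injectivity of the linear map $\bar v\mapsto v$ then follows from surjectivity of $\Des$, so linear independence is preserved.

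The paper's construction is different: it assigns the value $a_i/d$ to each permutation in a descent class of size $d$, i.e.\ it \emph{spreads} $\bar v$ uniformly over each class rather than copying it. As you correctly flag in your closing paragraph, that is the lift adapted to left eigenvectors, and checking it is an eigenvector of $K$ does not reduce to the lumpability hypothesis \eqref{E:lumpable} alone---one would need the dual condition that $\sum_{\pi:\Des(\pi)=\alpha}K(\pi,\sigma)$ depends only on $\Des(\sigma)$. Your pull-back construction is the one that works directly from lumpability for right eigenvectors, and in that sense your proof is more transparent and more self-contained than the paper's sketch; your explicit remark about the right/left distinction is exactly the point the paper glosses over.
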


\begin{proof}
If the coordinate indexed by a particular descent class has value 
$a_i$ in a chosen eigenvector of $\Kbar$, then assigning value 
$a_i/d$  for each permutation in the descent class, letting $d$ be the size of the descent class,
yields an eigenvector of $\Kbar$.  
The fact that this is indeed an eigenvector of $K$ follows again from the fact that $K$ admits
a lumping according to descent classes.
\end{proof}

\begin{proposition}
The matrix $K$ is block diagonalizable, with one $(n! - 2^{n-1})$ by $(n! -2^{n-1})$
block and each descent class $D_i$
giving rise to a 1 by 1 block.
\end{proposition}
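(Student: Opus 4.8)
The plan is to read the block structure off from the invariance of the descent algebra under the walk. As explained just above the statement, realized on the group algebra $\field[\perm_n]$ the walk $K$ is the operator of left multiplication by $W$, and $W$ lies in the subalgebra $\Sol_n$; hence $\Sol_n$ is a $K$-invariant subspace (this invariance is exactly the lumpability that produced $\Kbar$), of dimension $2^{n-1}$ with one basis vector $Y_\alpha$ for each composition $\alpha\in\Comp(n)$, equivalently one for each descent class. Taking a basis of $\field[\perm_n]$ that starts with $\{Y_\alpha\}_{\alpha\in\Comp(n)}$ and is completed arbitrarily exhibits $K$ in block triangular form with a top-left $2^{n-1}\times 2^{n-1}$ block --- the matrix of $K|_{\Sol_n}$ --- and a bottom-right $(n!-2^{n-1})\times(n!-2^{n-1})$ block; comparing with \eqref{E:lump-def} (expand $W\cdot Y_\beta$ in the $Y$-basis and read off coefficients) identifies the top-left block with $\Kbar$ itself.

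It remains to split the $\Kbar$-block into $2^{n-1}$ one-dimensional blocks, one per descent class. For this I would use that $K|_{\Sol_n}\cong\Kbar$ is triangular with respect to the refinement order on $\Comp(n)$ --- i.e.\ expanding $W\cdot Y_\beta$ in the $Y$-basis involves only $Y_\gamma$ with $\gamma$ comparable to $\beta$ --- so that after reordering $\{Y_\alpha\}$ compatibly with that order the block becomes triangular, and its diagonal entries (the eigenvalues of $\Kbar$) are naturally indexed by the compositions $\alpha$, hence by the descent classes $D_\alpha$. When $\Kbar$ is moreover diagonalizable --- the situation for the walks analyzed in Section~\ref{recursive-section} --- one obtains a genuine splitting into lines: a basis of eigenvectors of $\Kbar$, being a combination of the $Y_\alpha$ and therefore lying in $\Sol_n$, lifts by the preceding proposition to $2^{n-1}$ linearly independent eigenvectors of $K$, which together with the complementary piece accounting for $\field[\perm_n]/\Sol_n$ give the asserted block form with $2^{n-1}$ blocks of size $1$ and one block of size $n!-2^{n-1}$.

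The invariance of $\Sol_n$, the identification of its block with $\Kbar$, and the counting of dimensions are all routine; the step with real content is the triangularity of $K|_{\Sol_n}$ for the refinement order on compositions (equivalently, of the induced endomorphism of $\QSym_n$ in the monomial basis), which is also what licenses indexing the $1\times 1$ blocks by descent classes. I would establish it by duality: under the pairing $\pair{\cdot}{\cdot}$, left multiplication by $W$ on $\Sol_n$ is transpose to an endomorphism of $\QSym_n$, and Proposition~\ref{P:compat} together with the coproduct on $\QSym$ pins down which monomials $M_\gamma$ can occur in the image of $M_\beta$. That triangularity --- and, for the literal block-\emph{diagonal} statement as opposed to a merely block-triangular one, the attendant task of producing a $K$-invariant complement to $\Sol_n$ inside $\field[\perm_n]$, for which the eigenvector-lifting proposition above is the natural tool --- is the principal obstacle; everything else is bookkeeping.
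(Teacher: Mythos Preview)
Your route and the paper's are genuinely different, and the comparison is instructive. You work with $K$ and the $K$-invariant subspace $\Sol_n=\operatorname{span}\{Y_\alpha\}$, obtaining a block-\emph{triangular} form whose top block is $\Kbar$. The paper instead works with $K^T$ and writes down the complementary $(n!-2^{n-1})$-dimensional subspace directly: it is
\[
V_0=\Bigl\{v:\ \sum_{\pi\in D_i}v_\pi=0\ \text{for every descent class }D_i\Bigr\},
\]
and one checks from the lumping identity that $\sum_{\tau\in D_\alpha}(K^Tv)_\tau=\sum_i\Kbar(i,\alpha)\sum_{\pi\in D_i}v_\pi$, so $V_0$ is $K^T$-invariant. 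Note that $V_0$ is exactly the orthogonal complement of $\Sol_n$ under the standard pairing on $\field[\perm_n]$, so its $K^T$-invariance is literally dual to the $K$-invariance of $\Sol_n$ you used; the two arguments deliver the same block-triangular structure from opposite sides. The paper's explicit $V_0$ is thus the ``invariant complement'' you flagged as the principal obstacle---but for $K^T$ rather than $K$, so it does not by itself upgrade triangular to diagonal any more than your argument does.

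For the $1\times1$ blocks the paper takes a shortcut you avoided: it asserts that each indicator vector $v_{D_i}=\sum_{\pi\in D_i}\pi$ is itself a $K^T$-eigenvector. Your caution here is well placed. One has $(K^Tv_{D_i})_\pi=\sum_{\tau\in D_i}K(\tau,\pi)$, and nothing in the lumping hypothesis forces this to vanish for $\pi\notin D_i$; already for $n=3$ with a generic $W\in\Sol_3$ it does not (e.g.\ take $D_i=\{123\}$ and $\pi=213$). So the paper's mechanism for producing the $1\times1$ blocks does not go through as stated, and your diagnosis---that one really needs diagonalizability of $\Kbar$ (equivalently, of the induced action of $K^T$ on the quotient $\field[\perm_n]/V_0$) to get $2^{n-1}$ honest one-dimensional invariant pieces---is the correct one. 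Your triangularity-in-the-monomial-basis argument is not needed merely to \emph{index} the blocks by descent classes, but it is exactly the ingredient (via Theorem~\ref{T:eigen}) that supplies the missing diagonalizability in the cases the paper cares about.
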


\begin{proof}
The idea is to show how to decompose the vector space upon which $K^T$ acts into  subspaces each of which is carried to itself by $K^T$.   For each descent class $D_i$, notice that $K^T$ has an eigenvector by setting each coordinate indexed by an element of the  descent class to 1 and all other coordinates to 0.  On the other hand, we obtain an $(n! - 2^{n-1})$-dimensional subspace also sent to itself by $K^T$ by considering those vectors where
the coordinates indexed by permutations in any chosen descent class
$D_i$ sum to 0 and all other coordinates are 0.
It follows easily from the definition of lumping that each of these vectors is sent to a vector with
the property that for any descent class, its coordinates which are indexed by permutations in
that descent class sum to 0.
\end{proof}
 
\noindent
{\bf Question:} When our random walk arises from a left action of the descent algebra, does this imply further  structure on $K$?


\section{Random walks resulting from endomorphisms of $\QSym$}
\label{S:random-walks}

This section ties together results from \cite{ABS}, \cite{NC1}, and \cite{R} to deduce a new consequence, namely that endomorphisms of quasisymmetric functions give rise to random walks under mild nonnegativity conditions.

Let $\Qmap\in\End(\QSym)$ and $n\ge 0$.  Define $c_{\alpha,\beta}$, for $\alpha,\beta\in\Comp(n)$ by
\begin{equation}\label{E:cdef}
\Qmap(F_\alpha) = \sum_{\beta\in\Comp(n)} c_{\alpha,\beta} F_\beta.
\end{equation}
Thus, $c_{\alpha,\beta} = \pair{Y_\beta}{\Qmap(F_\alpha)}.$ 

Suppose that the numbers $c_{\alpha,n}, \alpha\in\Comp(n)$, are nonnegative and identically zero. Define a probability measure $\QSf_\Qmap: \perm_n \to \R$ by 
\begin{equation}\label{E:QSf-distribution}
\QSf_\Qmap(\pi) = \frac{c_{\Des(\pi),n}}{\displaystyle{\sum_{\sigma\in\perm_n}} c_{\Des(\sigma),n}}.
\end{equation}
We will call this the {\em $QS^*$-distribution corresponding to $\Qmap$} because of its connection to Stanley's $QS$-distribution \cite{QS}, as explained in Section~\ref{QS-section}. 

Consider the random walk on $\perm_n$ where $\pi$ goes to $\sigma \pi$ with probability $\QSf_\Qmap(\sigma^{-1})$. Denote the corresponding transition probability matrix by $K$, so that
\[ K (\pi, \sigma \pi) = \QSf_\Qmap(\sigma^{-1}).\]
Note that the transpose of $K$ is the transition matrix of the left random walk on $\perm_n$ driven by $\sum_{\sigma\in\perm_n} \QSf_\Qmap(\sigma) \sigma$, where one step takes $\pi$ to $\sigma \pi$ with probability $\QSf(\sigma)$.

Let $\lambda \in \field$ be defined by $\Qmap(M_1) = \eps M_1$. Let $\Qmap_n$ denote the restriction map $\Qmap|_{\QSym_n}$. If $\eps \ne 0$, then define $\Kbar$ to be the transpose of the matrix representing $\frac{1}{\eps^n} \Qmap_n$ relative to the fundamental basis, namely
\[ \Kbar(\alpha,\beta) = \frac{1}{\eps^n} c_{\alpha,\beta}.\]
for all $\alpha,\beta\in\Comp(n)$. If we need to be explicit about about $n$ and $\Qmap$ then we will write $\Kbar^\Qmap_n$. The same goes for $K$.

\begin{theorem}\label{T:end-markov}
Let $\Qmap\in \End(\QSym)$ and $n\ge 0$, and suppose that the numbers $c_{\alpha,n}, \alpha\in\Comp(n)$, are nonnegative and not identically zero. Then  
$\eps^n > 0$ and $\Kbar$ is a stochastic matrix. Furthermore, $\Kbar$ lumps $K$ by descent sets.
\end{theorem}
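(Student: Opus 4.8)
The plan is to establish the three assertions in sequence, extracting each from the Hopf-algebraic structure relating $\Qmap$, the descent algebra, and the pairing $\pair{\cdot}{\cdot}$. First I would show $\eps^n > 0$. The key observation is that $\Qmap$ is a Hopf algebra endomorphism, so it respects the coproduct $\Delta_\calQ$, and one can read off the coefficients $c_{\alpha,\beta}$ recursively in terms of $\lambda$ and lower-degree data; in particular, comparing the action of $\Qmap$ on $F_{(n)} = F_1^{\,n}$ (using that $F_{(n)}$ corresponds to the identity permutation and that $\Qmap$ is multiplicative) with the compatibility in Proposition~\ref{P:compat} shows that $c_{(n),n} = \lambda^n$. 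Since $c_{(n),n}$ is among the numbers assumed nonnegative, $\eps^n \ge 0$; and it cannot be zero because the $c_{\alpha,n}$ are not identically zero while each $c_{\alpha,n}$ is, by a triangularity argument with respect to the refinement order, bounded in terms of powers of $\lambda$ — so $\lambda \ne 0$ and hence $\eps^n > 0$. (Alternatively, and more cleanly: dualize $\Qmap$ to a multiplicative operator on $\Sol$ and use positivity of its action on $X_{(n)}$.)

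Next, $\Kbar$ is stochastic. Nonnegativity of its entries is immediate from $\eps^n > 0$ and $c_{\alpha,\beta} \ge 0$ once we know all the $c_{\alpha,\beta}$ (not just the $c_{\alpha,n}$) are nonnegative; here I would invoke the character-theoretic results of \cite{ABS,NC1,R} alluded to at the start of the section, which say that the nonnegativity hypothesis on the top row propagates. The row sums being $1$ is the substantive point: $\sum_\beta c_{\alpha,\beta}$ is obtained by applying $\Qmap$ to $F_\alpha$ and then summing all fundamental-basis coordinates, i.e.\ pairing $\Qmap(F_\alpha)$ with $\sum_\beta Y_\beta = Y_{(1^n)}$'s appropriate analogue — more precisely, summing the coordinates of a quasisymmetric function in the $F$-basis is the counit-type functional, and since $\Qmap$ commutes with the relevant structure, this functional of $\Qmap(F_\alpha)$ equals $\lambda^{?}$ times that of $F_\alpha$. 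The cleanest route: the map ``sum of $F$-coordinates in degree $n$'' is, up to scalar, evaluation against $X_{(n)}$, and $\Qmap$'s dual sends $X_{(n)}$ to $\lambda^n X_{(n)}$; dividing by $\eps^n$ gives row sums $1$.

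Finally, $\Kbar$ lumps $K$ by descent sets. This follows the template in the ``Lumping of random walks'' subsection: the distribution $\QSf_\Qmap$ is by construction constant on descent classes (it depends on $\pi$ only through $\Des(\pi)$), hence $W = \sum_\sigma \QSf_\Qmap(\sigma)\,\sigma \in \Sol_n$, so the transition matrix $K$ with $K(\pi,\sigma\pi) = \QSf_\Qmap(\sigma^{-1})$ satisfies the lumpability condition \eqref{E:lumpable} exactly as shown there, using that $\Sol_n$ is a subalgebra of $\field[\perm_n]$. It then remains to identify the lumped chain $\Kbar$ defined via \eqref{E:lump-def} with the matrix $\tfrac{1}{\eps^n} c_{\alpha,\beta}$; this is a bookkeeping computation: $\Kbar(\alpha,\beta) = \sum_{\Des(\sigma)=\beta} K(\pi,\sigma)$ for any $\pi$ with $\Des(\pi)=\alpha$ equals the coefficient of $Y_\beta$ in $W \cdot Y_\alpha$ suitably normalized, and by Proposition~\ref{P:compat} together with the definition $c_{\alpha,\beta} = \pair{Y_\beta}{\Qmap(F_\alpha)}$ and the duality between the left action of $\Sol$ and $\End(\QSym)$, this coefficient is precisely $\tfrac{1}{\eps^n} c_{\alpha,\beta}$.

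The main obstacle is the second step: pinning down \emph{why} nonnegativity of the single row $(c_{\alpha,n})_\alpha$ forces nonnegativity of every $c_{\alpha,\beta}$, and simultaneously why the row sums are exactly $1$. Both hinge on correctly translating $\Qmap$ into the language of characters of combinatorial Hopf algebras (a character $\chi$ with $\Qmap$ its ``$\chi$-induced'' endomorphism, in the sense of \cite{ABS}), where $c_{\alpha,n}$ records the value of $\chi$ on the fundamental basis and the full matrix $c_{\alpha,\beta}$ is recovered by an iterated coproduct; nonnegativity and normalization of $\chi$ then give both facts at once. Getting this dictionary precisely right — rather than the degree-by-degree recursion, which works but is messier — is where the care is needed.
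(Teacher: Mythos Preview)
Your overall architecture---dualize $\Qmap$ to left multiplication by a character $X^\Qmap$ on $\Sol_n$, then read off stochasticity and lumping from descent-algebra computations---matches the paper. But several concrete steps are wrong, and one key identity is missing.

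First, the errors. You claim $F_{(n)} = F_1^{\,n}$; this is false (the left side is $h_n$, the right side is $p_1^{\,n}$). You claim $c_{(n),n} = \lambda^n$; this is also false---for instance with $\Qmap = \Theta$ and $n \ge 2$ one has $c_{(n),n} = 2$ while $\lambda^n = 2^n$. Consequently your argument for $\lambda^n > 0$ does not work as written. Separately, in the row-sum computation you write $X_{(n)}$ where you need $X_{1^n}$: the functional ``sum of $F$-coordinates'' is pairing against $\sum_\beta Y_\beta = X_{1^n}$, not against $X_{(n)} = Y_{(n)}$. With that correction your row-sum argument is valid, since indeed $X^\Qmap \cdot X_{1^n} = \lambda^n X_{1^n}$.

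Second, the missing piece. The paper's proof hinges on a single identity you never articulate:
\[
c_{\alpha,\beta} \;=\; \sum_{\tau \in \perm_n:\;\Des(\tau)=\beta} c_{\Des(\pi\tau^{-1}),\,n}
\qquad\text{for any $\pi$ with $\Des(\pi)=\alpha$,}
\]
obtained by expanding $X^\Qmap \cdot Y_\alpha$ using $(X^\Qmap)_n = \sum_\sigma c_{\Des(\sigma),n}\,\sigma$ and the duality $\pair{X^\Qmap \cdot W}{G} = \pair{W}{\Qmap(G)}$. From this one identity everything falls out in the right order: every $c_{\alpha,\beta}$ is a sum of the assumed-nonnegative numbers $c_{\gamma,n}$, hence nonnegative; summing over $\beta$ gives $\sum_\beta c_{\alpha,\beta} = \sum_{\sigma} c_{\Des(\sigma),n}$, so all rows have the same positive sum; computing that common sum as the coefficient of $M_{1^n}$ in $\Qmap(M_{1^n}) = \Qmap(F_{1^n})$ gives $\lambda^n$, whence $\lambda^n > 0$; and the identity is exactly the lumping condition \eqref{E:lump-def}. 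Your proposal gestures at this via ``Proposition~\ref{P:compat} together with the duality,'' but only at the end and only for lumping---it should be the first thing you prove, and the rest should be corollaries of it rather than separate arguments.
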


Let us illustrate Theorem~\ref{T:end-markov} with an example with
Stembridge's $\Theta$-map before turning to the proof. 

\begin{example} \label{Stembridge-example}
Given $\alpha\in\Comp(n)$ for some 
$n\ge 1$, let $\Lambda(\alpha) = \{ i \in S_\alpha ~|~ i \ne 1$ and $ i -1 \ne S_\alpha\}$. For instance, if $\alpha = 1121134$ then $S_\alpha = \{1,2,4,5,6,9\}$ and $\Lambda(\alpha) = \{4,9\}.$ Thus, if $\alpha$ is the descent composition of a permutation $\pi\in \perm_n$, then $\Lambda(\alpha)$ is the {\em peak set} of $\pi$, namely the set $\{ i \in [2,n-1]~|~\pi_{i-1} < \pi_i > \pi_{i+1}\}.$  According to \cite[Proposition~3.5]{Ste}, $\Theta$ can be defined in terms of the fundamental basis by
\[ \Theta(F_\alpha) = 2^{|\Lambda(\alpha)| + 1} \sum_{\beta\in\Comp(n) \atop \Lambda(\alpha) \subseteq S_\beta \triangle (S_\beta+1)} F_\beta\]
where $T + 1 = \{t + 1 ~|~ t\in T\}$ and $\triangle$ stands for the symmetric difference: $A \triangle B = (A - B) \cup (B - A)$. It is well known that $\Theta$ is a Hopf algebra homomorphism \cite[Example~4.9]{ABS}.

The corresponding $QS^*$-distribution is given by
\begin{equation}\label{E:Stemb-QS}
 \QSf_\Theta(\pi) = \begin{cases} \frac{1}{2^{n-1}} & \text{if the peak set of $\pi$ is empty}\\
0 & \text{otherwise.}\end{cases}
\end{equation}
This is the probability that a deck of $n$ cards is in arrangement $\pi$ after an inverse face-up face-down shuffle \cite{BD}: Remove a subset of cards from the deck, letting all subsets have the same chance of being selected, and place the packet face down on top of the remaining cards. The row of $\Kbar$ indexed by $(n)$ gives the distribution of descent sets after performing one face-up face-down shuffle: Cut the deck into two packets according to the binomial distribution, flip the top packet over so the cards are facing up, then shuffle the two packets; the probability of ending up with a permutation with descent composition $\alpha$ is $\Kbar(n,\alpha)$. For instance,
\[ \Kbar^\Theta_3 = \;\;\;
\begin{tabular}{c|cccc}
    & $3$ & $12 $ & $21$ & $111$ \\
    \hline 
    $3$ &  1/4 & 1/4 & 1/4 & 1/4 \\
       $12$ & 1/4 & 1/4 & 1/4 & 1/4  \\
      $21$ & 0 & 1/2 & 1/2 & 0  \\
      $111$ & 1/4 & 1/4 & 1/4 & 1/4
      \end{tabular}
\]

Each entry, say $\Kbar(21,12) = 1/2$, can be explained in terms of lumping $K$ by descents as follows. Pick any permutation $\pi\in\perm_3$ such that $\Des(\pi) = 21$, say $\pi = {\bf 132}$ (short for $\pi_1 = 1, \pi_2 = 3, \pi_3 = 2$). Then $\Kbar(21,12)$ should be the probability a permutation $\sigma\in\perm_3$ chosen with probability $\QSf_\Theta(\sigma)$ will have the property that $\Des(\sigma^{-1} \pi) = 12$. 

Stembridge  \cite[Theorem~3.6]{Ste} provides an alternate probabilistic interpretation of $\Theta$: Given $\pi\in\perm_n$, independently assign a $+$ or $-$ sign to every $\pi_i$, letting each sign occur with probability $1/2$. Then the probability that the resulting signed permutation has descent composition $\beta$ is $\Kbar(\Des(\pi),\beta)$. We generalize Stembridge's result in Section~\ref{S:ab}.
\end{example}

The proof of Theorem~\ref{T:end-markov} amounts to showing that $\Qmap_n$ is, after normalization, dual to the operator on $\Sol_n$ given by $W\mapsto\left(\sum_{\sigma\in\perm_n} \QSf_\Qmap(\sigma) \, \sigma \right) \cdot W.$ 
We begin by setting up a bijection between $\End(\QSym)$ and those series in $\widehat{\Sol}$ that encode characters of $\QSym$. Recall that we have identified the dual vector space of $\QSym$ with $\dhat$, the set of formal infinite series $\sum_{n \ge 0} W_n$ such that $W_n \in \Sol_n$. Thus, a character of $\QSym$ is a series $W\in \widehat{\Sol}$ such that $\pair{W}{1} = 1$ and 
$\pair{W}{FG} = \pair{W}{F} \pair{W}{G}$ for all $F,G\in\QSym$. 

A character of fundamental importance is the series $X = \sum_{n\ge 0} X_n$, where $X_n$ is the identity permutation in $\perm_n$  (see \eqref{E:Xdef}).
This is called the {\it universal character}.
 It follows from the definition of $X$ that 
\[\pair{X}{M_\alpha} = \pair{X}{F_\alpha} = \begin{cases}
1 & \text{if $\alpha = ( ) $ or $\alpha = (n)$}\\
0 & \text{otherwise.}
\end{cases}
\]
Thus, for any quasisymmetric function $F(x_1, x_2, \ldots)$, 
\[\pair{X}{F(x_1,x_2,\ldots)} = F(1,0,0,\ldots).\]
For $\Qmap\in\End(\QSym)$ and $W\in\dhat$, define the series $W^\Qmap \in \dhat$ by 
\[\pair{W^\Qmap}{\cdot} = \pair{W}{\Qmap(\cdot)}.\]
In the theory of combinatorial Hopf algebras, $X$ satisfies a universal property \cite[Theorem~4.1]{ABS}, of which the following is an immediate corollary:

\begin{proposition}
 \label{P:universal}
The correspondence $\Qmap \mapsto X^\Qmap$ is a bijection between $\End(\QSym)$ and the set of characters of $\QSym$. Moreover,
\begin{equation}\label{E:univ}
 \Qmap(M_\alpha) = \sum \pair{X^\Qmap}{M_{\beta_1}} \cdots \pair{X^\Qmap}{M_{\beta_m}} M_{(|\beta_1|, \ldots, |\beta_m|)},
\end{equation}
where the sum is over all sequences of compositions $\beta_1, \ldots, \beta_m$ such that $\beta_1 \cdots \beta_m = \alpha$.
\end{proposition}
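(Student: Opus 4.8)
The plan is to derive this as a corollary of the universal property of the pair $(\QSym, X)$ stated as \cite[Theorem~4.1]{ABS}, which asserts that for any combinatorial Hopf algebra $(H,\zeta)$ there is a unique morphism of combinatorial Hopf algebras $\Psi: H \to (\QSym, X)$, equivalently, that composition with the character $X$ sets up a bijection between Hopf algebra morphisms $H\to\QSym$ and characters of $H$. Here I specialize to $H = \QSym$. First I would check that the assignment $\Qmap\mapsto X^\Qmap$ does land in the set of characters: since $\Qmap$ is a Hopf algebra endomorphism it is in particular an algebra map, so $\pair{X^\Qmap}{FG} = \pair{X}{\Qmap(FG)} = \pair{X}{\Qmap(F)\Qmap(G)} = \pair{X}{\Qmap(F)}\pair{X}{\Qmap(G)} = \pair{X^\Qmap}{F}\pair{X^\Qmap}{G}$, and $\pair{X^\Qmap}{1} = \pair{X}{\Qmap(1)} = \pair{X}{1} = 1$ because $\Qmap$ preserves the unit; hence $X^\Qmap$ is a character.

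Next I would establish injectivity and surjectivity of $\Qmap\mapsto X^\Qmap$. Both follow from the universal property: given any character $\zeta$ of $\QSym$, the pair $(\QSym,\zeta)$ is a combinatorial Hopf algebra, so there is a \emph{unique} Hopf algebra morphism $\Psi:(\QSym,\zeta)\to(\QSym,X)$ with $X\circ\Psi = \zeta$, i.e.\ $\pair{X}{\Psi(\cdot)} = \zeta$, i.e.\ $X^\Psi = \zeta$. Uniqueness of $\Psi$ gives injectivity of our correspondence (two endomorphisms inducing the same character must coincide), and existence of $\Psi$ gives surjectivity. So $\Qmap\mapsto X^\Qmap$ is a bijection $\End(\QSym)\xrightarrow{\sim}\{\text{characters of }\QSym\}$.

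Finally, formula \eqref{E:univ} is the explicit description of the inverse map, and I would obtain it directly from the construction of $\Psi$ in \cite[Theorem~4.1]{ABS}: for a combinatorial Hopf algebra $(H,\zeta)$ the canonical morphism to $\QSym$ sends $h\in H_n$ to $\sum_{\alpha=(a_1,\dots,a_m)\models n}\zeta_{a_1,\dots,a_m}(h)\,M_\alpha$, where $\zeta_{a_1,\dots,a_m}$ denotes the composite of the iterated coproduct $\Delta^{(m-1)}$ landing in $H_{a_1}\otimes\cdots\otimes H_{a_m}$ followed by $\zeta^{\otimes m}$. Applying this with $H = \QSym$, $\zeta = X^\Qmap$, and $h = M_\alpha$, and using that the iterated coproduct of $M_\alpha$ in $\QSym$ is $\sum M_{\beta_1}\otimes\cdots\otimes M_{\beta_m}$ over all ways of writing $\alpha = \beta_1\cdots\beta_m$ as an ordered concatenation of (possibly empty) compositions, yields exactly \eqref{E:univ}; one checks that $\Psi = \Qmap$ here because $X^\Psi = X^\Qmap$ forces $\Psi=\Qmap$ by the injectivity just proved. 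The only mildly delicate point, and the one I expect to require the most care, is the bookkeeping of empty compositions in the iterated coproduct versus the statement's sum ``over all sequences of compositions $\beta_1,\dots,\beta_m$ with $\beta_1\cdots\beta_m=\alpha$'': one must confirm that allowing empty $\beta_i$ (which contribute factors $\pair{X^\Qmap}{M_{()}} = 1$ and an empty part that is dropped in $(|\beta_1|,\dots,|\beta_m|)$) does not overcount, i.e.\ that the net effect matches the grouping in the coproduct-power formula — this is routine but is where the two formulations have to be reconciled.
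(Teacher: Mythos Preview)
Your proposal is correct and matches the paper's approach exactly: the paper states this proposition as ``an immediate corollary'' of \cite[Theorem~4.1]{ABS} and gives no further proof, so you have simply supplied the details that the paper leaves implicit. Your concern about empty $\beta_i$ is harmless --- since $(|\beta_1|,\ldots,|\beta_m|)$ must be a genuine composition, the sum in \eqref{E:univ} is implicitly over nonempty $\beta_i$, which is precisely what the graded projection in the ABS construction enforces.
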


\begin{remark}\label{R:triangular}
It follows from Proposition~\ref{P:universal} that the matrix relative to the monomial basis for $\Qmap$ as a linear operator on $\QSym_n$ is lower triangular, provided that the ordering of the basis elements is a linear extension of the partial ordering on $\Comp(n)$.
\end{remark}

Next we develop some properties of characters to be used shortly. The following characterization is proven in \cite[Theorem~3.2(ii)-(iii)]{R}. 

\begin{proposition}\label{P:grouplike}
A series $W\in \widehat{\Sol}$ is a character of $\QSym$ if and only if  $\Delta_\Sol(W_n) = \sum_{i=0}^n W_i \otimes W_{n-i}$ for all $n$ (i.e., $W$ is group-like for $\Delta_\Sol$).
\end{proposition}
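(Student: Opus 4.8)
The plan is to translate both conditions through the pairing $\pair{\cdot}{\cdot}$ into the dual statement on $\QSym$ and use that $(\Sol,\star,\Delta_\Sol)$ is the graded dual Hopf algebra of $\QSym$. Recall from \eqref{E:X-mult} that $X_\beta \star X_\gamma = X_{\beta\cdot\gamma}$, so $\star$ is dual to the coproduct $\Delta_\calQ(M_\alpha)=\sum_{\alpha=\beta\cdot\gamma} M_\beta\otimes M_\gamma$, i.e.\ $\pair{U\star V}{M_\alpha} = \sum_{\alpha=\beta\cdot\gamma}\pair{U}{M_\beta}\pair{V}{M_\gamma}$. Dually, the product of power series in $\QSym$ is dual to $\Delta_\Sol$, meaning $\pair{W_n}{FG} = \sum_i \pair{W_i}{F_{(1)}}\pair{W_{n-i}}{G_{(2)}}$ when $\Delta_\Sol(W_n)=\sum_i W_i\otimes W_{n-i}$ — more precisely, for homogeneous $F\in\QSym_p$, $G\in\QSym_q$ with $p+q=n$, $\pair{W_n}{FG}$ equals $(\pair{\cdot}{F}\otimes\pair{\cdot}{G})$ applied to the $(p,q)$-graded component of $\Delta_\Sol(W_n)$.

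First I would record what it means for $W=\sum W_n$ to be a character: $\pair{W}{1}=1$ and $\pair{W}{FG}=\pair{W}{F}\pair{W}{G}$ for all $F,G\in\QSym$. Since the pairing is graded, $\pair{W}{F}=\pair{W_p}{F}$ for $F\in\QSym_p$, so the multiplicativity condition says exactly: for all $p,q$ and all $F\in\QSym_p$, $G\in\QSym_q$,
\[ \pair{W_{p+q}}{FG} = \pair{W_p}{F}\,\pair{W_q}{G}. \]
On the other hand, the group-like condition $\Delta_\Sol(W_n)=\sum_{i=0}^n W_i\otimes W_{n-i}$, paired against $F\otimes G$ with $F\in\QSym_p$, $G\in\QSym_q$, $p+q=n$, picks out exactly the $(p,q)$-component $W_p\otimes W_q$, giving $(\pair{\cdot}{F}\otimes\pair{\cdot}{G})(\Delta_\Sol(W_n)) = \pair{W_p}{F}\pair{W_q}{G}$. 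But by the duality of $\Delta_\Sol$ with multiplication in $\QSym$, the left side is precisely $\pair{W_n}{FG}$. Hence the two conditions coincide on all homogeneous $F,G$, and since $\QSym$ is graded these determine everything. For the unit/counit normalization: $\pair{W}{1}=\pair{W_0}{1}$, and $W_0\in\Sol_0$ is a scalar multiple of the empty permutation; the counit of $\QSym$ (the coefficient of the empty composition) is dual to the fact that $\pair{W_0}{1}=1$ is equivalent to $W_0$ being the identity of $\Sol_0$, which is also what group-likeness forces in degree $0$ (taking $n=0$ gives $\Delta_\Sol(W_0)=W_0\otimes W_0$, and combined with the counit axiom this pins $W_0=X_0$). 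So I would handle degree $0$ separately as a short remark and then run the graded-component argument for $n\ge 1$.

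The main obstacle is bookkeeping rather than conceptual: one must be careful that the pairing between $\Sol\otimes\Sol$ and $\QSym\otimes\QSym$ is the one induced componentwise from $\pair{\cdot}{\cdot}$, and that $\Delta_\Sol$ as defined on $\Sol$ (first on the $X_n$, then extended multiplicatively for $\star$) is genuinely the graded dual of the power-series product on $\QSym$ — but this is exactly the content of Theorem~6.1 of \cite{NC1} quoted above, so I may invoke it. A secondary subtlety is that $\dhat=\prod_i\Sol_i$ is a completion, so $\Delta_\Sol$ on an element of $\dhat$ should be read as acting component-by-component ($\Delta_\Sol(W)=\sum_n\Delta_\Sol(W_n)$), and one should note that each $\Delta_\Sol(W_n)$ is a finite sum in $\bigoplus_{i+j=n}\Sol_i\otimes\Sol_j$, so no convergence issue arises; the equivalence is then tested degree by degree and there is nothing further to check. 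With these identifications in place the proof is a two-line matching of coefficients in each bidegree.
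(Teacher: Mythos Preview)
Your argument is correct. The paper does not actually supply its own proof of this proposition; it simply cites \cite[Theorem~3.2(ii)-(iii)]{R}. Your approach---translating both the character condition and the group-like condition through the graded pairing and invoking the Hopf duality of Theorem~6.1 of \cite{NC1}---is the natural way to prove it directly, and the bookkeeping you flag (the degree-$0$ case via the counit, componentwise interpretation of $\Delta_\Sol$ on $\dhat$, and non-degeneracy of the induced pairing on tensors in each finite-dimensional bidegree) is exactly what needs to be checked. One small clarification: for the converse direction you implicitly use that the pairing $\Sol_i\otimes\Sol_j$ with $\QSym_i\otimes\QSym_j$ is perfect in each bidegree, so that agreement on all $F\otimes G$ forces equality of the elements of $\bigoplus_{i+j=n}\Sol_i\otimes\Sol_j$; this is immediate from finite-dimensionality, but worth stating.
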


We now derive a useful formula for left multiplication in $\Sol$ by $X^\Qmap$. 

\begin{proposition} \label{P:leftmult}
For $\Qmap\in \End(\QSym)$ and $\alpha \in \Comp(n)$, 
\begin{equation}\label{E:leftmult}
X^\Qmap \cdot X_\alpha 
= (X_\alpha)^\Qmap.
\end{equation}
\end{proposition}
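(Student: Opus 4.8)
The plan is to prove \eqref{E:leftmult} by pairing both sides against an arbitrary monomial quasisymmetric function $M_\gamma$ and showing the two numbers agree for all $\gamma$; since $\pair{\cdot}{\cdot}$ identifies $\dhat$ with the full dual of $\QSym$, this suffices. The right-hand side is easy: by definition of $W^\Qmap$ we have $\pair{(X_\alpha)^\Qmap}{M_\gamma} = \pair{X_\alpha}{\Qmap(M_\gamma)}$. So the whole proof reduces to computing $\pair{X^\Qmap \cdot X_\alpha}{M_\gamma}$ and matching it with $\pair{X_\alpha}{\Qmap(M_\gamma)}$.

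First I would write $\alpha = (a_1,\ldots,a_k)$ and use \eqref{E:X-mult} to express $X_\alpha = X_{a_1}\star\cdots\star X_{a_k}$. Then I would apply Proposition~\ref{P:compat} with $G = X^\Qmap$ (more precisely its degree-$n$ component, $n = |\gamma|$) and $F_i = X_{a_i}$, obtaining
\[
X^\Qmap \cdot (X_{a_1}\star\cdots\star X_{a_k})
= \sum (G_{(1)}\cdot X_{a_1})\star\cdots\star(G_{(k)}\cdot X_{a_k}),
\]
where $\Delta_\Sol^{k}(G) = \sum G_{(1)}\otimes\cdots\otimes G_{(k)}$. Now invoke Proposition~\ref{P:grouplike}: since $X^\Qmap$ is a character, it is group-like for $\Delta_\Sol$, so iterating gives $\Delta_\Sol^{k}(X^\Qmap)_n = \sum_{n_1+\cdots+n_k = n} (X^\Qmap)_{n_1}\otimes\cdots\otimes(X^\Qmap)_{n_k}$. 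Each factor $G_{(i)}\cdot X_{a_i}$ then becomes $(X^\Qmap)_{n_i}\cdot X_{a_i}$, and since $X_{a_i}$ is the identity of $\Sol_{a_i}$, the product $(X^\Qmap)_{n_i}\cdot X_{a_i}$ vanishes unless $n_i = a_i$, in which case it equals $(X^\Qmap)_{a_i}$. Hence the sum collapses to the single term $(X^\Qmap)_{a_1}\star\cdots\star(X^\Qmap)_{a_k}$.

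It remains to check that $\pair{(X^\Qmap)_{a_1}\star\cdots\star(X^\Qmap)_{a_k}}{M_\gamma} = \pair{X_\alpha}{\Qmap(M_\gamma)}$. For this I would use the duality of $\star$ with $\Delta_\calQ$ (the theorem on $(\Sol,\star,\Delta_\Sol)$ being the graded dual of $\QSym$): the left side equals $\sum \pair{(X^\Qmap)_{a_1}}{M_{\gamma_{(1)}}}\cdots\pair{(X^\Qmap)_{a_k}}{M_{\gamma_{(k)}}}$ summed over the iterated coproduct $\Delta_\calQ^{k}(M_\gamma) = \sum M_{\gamma_{(1)}}\otimes\cdots\otimes M_{\gamma_{(k)}}$, i.e.\ over all ways to write $\gamma = \gamma_{(1)}\cdots\gamma_{(k)}$ with $|\gamma_{(i)}| = a_i$. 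On the other hand, $\pair{X_\alpha}{\Qmap(M_\gamma)}$ is, by definition of the pairing, the coefficient of $M_\alpha$ in $\Qmap(M_\gamma)$, and Proposition~\ref{P:universal} formula \eqref{E:univ} gives exactly $\Qmap(M_\gamma) = \sum \pair{X^\Qmap}{M_{\beta_1}}\cdots\pair{X^\Qmap}{M_{\beta_m}} M_{(|\beta_1|,\ldots,|\beta_m|)}$; extracting the coefficient of $M_\alpha = M_{(a_1,\ldots,a_k)}$ forces $m = k$ and $|\beta_i| = a_i$, reproducing the same sum. Since $\pair{(X^\Qmap)_{a_i}}{M_{\beta_i}} = \pair{X^\Qmap}{M_{\beta_i}}$ when $|\beta_i| = a_i$, the two expressions coincide.

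I expect the main obstacle to be purely bookkeeping: keeping the iterated coproducts $\Delta_\Sol^{k}$ and $\Delta_\calQ^{k}$ and their compatibility with the pairing straight, and making sure the group-like property is applied in the correct iterated form. There is no serious conceptual difficulty once Propositions~\ref{P:compat}, \ref{P:universal}, and \ref{P:grouplike} are in hand; the key insight that makes everything collapse is that each $X_{a_i}$ is the identity of the homogeneous component $\Sol_{a_i}$, which kills all cross terms in the coproduct expansion. One could alternatively phrase the proof more slickly by noting that left multiplication by a group-like element of $\dhat$ is the adjoint of the convolution-type operator $\Qmap$, but the pairing-against-$M_\gamma$ argument above is the most transparent.
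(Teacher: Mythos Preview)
Your proposal is correct and follows essentially the same route as the paper's proof: factor $X_\alpha = X_{a_1}\star\cdots\star X_{a_k}$, combine Proposition~\ref{P:compat} with the group-like property of $X^\Qmap$ (Proposition~\ref{P:grouplike}) to reduce $X^\Qmap\cdot X_\alpha$ to $(X^\Qmap)_{a_1}\star\cdots\star(X^\Qmap)_{a_k}$, then pair against $M_\gamma$ via the $\star$/$\Delta_\calQ$ duality and match with the coefficient of $M_\alpha$ in $\Qmap(M_\gamma)$ using \eqref{E:univ}. The only cosmetic difference is that the paper collapses the coproduct sum in one line by noting $X^\Qmap\cdot X_{a_i} = (X^\Qmap)_{a_i}$ directly, whereas you spell out the vanishing of the cross terms explicitly.
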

\begin{proof}
Let $\alpha = a_1 \ldots a_k \in \Comp(n)$. By applying Proposition~\ref{P:grouplike} and then Proposition~\ref{P:compat}, we obtain
\[
 X^\Qmap \cdot X_\alpha
=  X^\Qmap \cdot (X_{a_1} \star \cdots \star X_{a_k}) \\
 =  (X^\Qmap \cdot X_{a_1}) \star \cdots \star (X^\Qmap \cdot X_{a_k}).
\]
Since $X_{a_i}$ is the identity permutation in $\perm_{a_i}$, we have $X^\Qmap \cdot X_{a_i} = (X^\Qmap)_{a_i}$. For $\beta\in\Comp(n)$,
\[
\pair{ (X^\Qmap)_{a_1} \star \cdots \star (X^\Qmap)_{a_k}}{M_\beta} =
\pair{ (X^\Qmap)_{a_1} \otimes \cdots \otimes (X^\Qmap)_{a_k}}{\Delta_\calQ^{k}(M_\beta)}.
\]
The right-hand side vanishes if $\beta \not\ge \alpha$. If $\beta\ge \alpha$, then there exist compositions $\beta_1, \ldots, \beta_k$ such that $\beta=\beta_1 \cdots \beta_k$ and $a_i = |\beta_i|$, and we have
\[\pair{ (X^\Qmap)_{a_1} \otimes \cdots \otimes (X^\Qmap)_{a_k}}{\Delta_\calQ^{k}(M_\beta)} = 
\pair{X^\Qmap}{M_{\beta_1}} \cdots \pair{X^\Qmap}{M_{\beta_k}}.\] 
According to \eqref{E:univ}, this is the coefficient of $M_\alpha$ in the monomial basis expansion of $\Qmap(M_\beta)$. This coefficient is given by $\pair{X_\alpha}{\Qmap(M_\beta)}$, or  equivalently $\pair{(X_\alpha)^\Qmap}{M_\beta}$.

We have proved that $X^\Qmap \cdot X_\alpha$ and $(X_\alpha)^\Qmap$ agree on the monomial basis, and hence that they are equal.
\end{proof}

\begin{proposition}\label{P:transpose}
If $\Qmap\in \End(\QSym)$ then the character $X^\Qmap$ satisfies
\begin{equation}\label{E:transpose}
\pair{X^\Qmap \cdot W}{ G }= \pair{W}{\Qmap(G)}
\end{equation}
for all $W\in \widehat{\Sol}$ and $G\in\QSym$. 
\end{proposition}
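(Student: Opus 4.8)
The plan is to reduce the claimed identity $\pair{X^\Qmap \cdot W}{G} = \pair{W}{\Qmap(G)}$ to the special case $W = X_\alpha$, which is exactly Proposition~\ref{P:leftmult}, by exploiting bilinearity and the structure of $\dhat$. Both sides of \eqref{E:transpose} are, for fixed $\Qmap$, bilinear in the pair $(W, G)$. Since $\{M_\beta\}$ spans $\QSym$, it suffices to verify the identity when $G = M_\beta$ ranges over the monomial basis. On the other side, $W = \sum_{n\ge 0} W_n$ with $W_n \in \Sol_n$, and the pairing $\pair{\cdot}{M_\beta}$ only sees the homogeneous component of $W$ of degree $|\beta|$; moreover $\{X_\alpha\}_{\alpha\in\Comp(m)}$ is a basis of $\Sol_m$. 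So the whole statement follows once we know it holds for $W = X_\alpha$ and $G = M_\beta$ for all compositions $\alpha,\beta$.

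Next I would establish the identity in that special case. By Proposition~\ref{P:leftmult}, $X^\Qmap \cdot X_\alpha = (X_\alpha)^\Qmap$. Pairing against $M_\beta$ and unwinding the definition of $(\cdot)^\Qmap$, namely $\pair{W^\Qmap}{\cdot} = \pair{W}{\Qmap(\cdot)}$, gives
\[
\pair{X^\Qmap \cdot X_\alpha}{M_\beta}
= \pair{(X_\alpha)^\Qmap}{M_\beta}
= \pair{X_\alpha}{\Qmap(M_\beta)},
\]
which is exactly $\pair{X_\alpha}{\Qmap(M_\beta)}$, the desired right-hand side with $W = X_\alpha$ and $G = M_\beta$. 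This chain is essentially a restatement of what was proved inside Proposition~\ref{P:leftmult}, so the argument is short.

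Finally I would assemble the general case: writing $W = \sum_n W_n$ and expanding each $W_n = \sum_{\alpha\in\Comp(n)} a_\alpha X_\alpha$, and using that $X^\Qmap \cdot (-)$ is linear and degree-preserving (it acts componentwise in $\dhat$, and $X^\Qmap \cdot X_\alpha \in \Sol_n$ for $\alpha\in\Comp(n)$), both sides of \eqref{E:transpose} are obtained from the special case by the same finite linear combination once $G = M_\beta$ is fixed; then extend linearly in $G$. The only point requiring a little care — and the mild ``obstacle'' — is the interchange of the (possibly infinite) sum $\sum_n W_n$ with the pairing: this is harmless because for fixed $G\in\QSym$ (hence $G$ of bounded degree, being a finite linear combination of $M_\beta$'s) only finitely many terms $W_n$ contribute to either $\pair{X^\Qmap\cdot W}{G}$ or $\pair{W}{\Qmap(G)}$, since $\Qmap$ preserves the grading and the pairing is graded. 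So no genuine convergence issue arises, and the proof is complete.
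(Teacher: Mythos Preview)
Your proof is correct and follows essentially the same approach as the paper: reduce to the case $W = X_\alpha$, invoke Proposition~\ref{P:leftmult} together with the definition $\pair{(X_\alpha)^\Qmap}{\cdot} = \pair{X_\alpha}{\Qmap(\cdot)}$, and then extend by linearity. The paper's version is a one-line proof that omits the details you spell out (in particular your careful remark that only finitely many homogeneous components contribute once $G$ is fixed), but the underlying argument is identical.
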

\begin{proof}
By Proposition~\ref{P:leftmult}, \eqref{E:transpose} holds for $W = X_\alpha$ for all $\alpha$, hence it holds in general by linearity.
\end{proof}

\begin{remark}
It is shown in \cite[Lemma~5.1]{Sch} that if $W \in \dhat$ is group-like for $\Delta_\Sol$ then the operator on $\Sol$ given by $V \mapsto W\cdot V$ is a Hopf algebra endomorphism of $(\Sol, \star, \Delta_\Sol)$. This assertion and its converse are direct consequences of Proposition~\ref{P:transpose} (together with Proposition~\ref{P:grouplike}).
\end{remark}

The preceding result says that $X^\Qmap$, as an operator on $\Sol$ acting by left multiplication, is dual to $\Qmap$. Notice that
$\pair{X^\Qmap}{F_\alpha} = \pair{X^\Qmap \cdot Y_n}{F_\alpha} = \pair{Y_n}{\Qmap(F_\alpha)} = c_{\alpha,n},$
so that the homogeneous component of $X^\Qmap$ of degree $n$ is
\begin{equation}\label{E:Xphi-c}
(X^\Qmap)_n =  \sum_{\alpha\in\Comp(n)} c_{\alpha,n} Y_\alpha = \sum_{\sigma\in\perm_n} c_{\Des(\sigma),n} \, \sigma.
\end{equation}

\begin{example}
For the $\Theta$-map, we have 
\[ (X^\Theta)_n = 2 \cdot (Y_n + Y_{(1,n-1)} + Y_{(1^2,n-2)} + \cdots + Y_{1^n}) = 2 \cdot \sum_{\sigma\in\perm_n : \Lambda(\sigma) = \emptyset} \sigma \; , \]
where $\Lambda(\sigma)$ stands for the peak set of $\sigma$. 
The fact that $\Theta$ is dual to $X^\Theta$ was noticed in \cite[Remarks~7.11]{ABN} and \cite{BHT}.
\end{example}

Proceeding with the proof of Theorem~\ref{T:end-markov},
we have
\[
\sum_{\beta\in\Comp(n)} c_{\alpha,\beta} Y_\beta =  X^\Qmap \cdot Y_\beta =
 \sum_{\sigma,\tau\in\perm_n:\Des(\tau) = \beta} c_{D(\sigma),n} \cdot \sigma\tau = \sum_{\pi,\tau\in\perm_n:\Des(\tau) = \beta} c_{D(\pi\tau^{-1}),n} \cdot \pi,
\]
which leads to the identity
 \begin{equation}\label{E:L-identity}
  c_{\alpha,\beta} = \sum_{\tau\in\perm_n: \Des(\tau) = \beta} c_{\Des(\pi\tau^{-1}), n}.
\end{equation}
where $\pi$ is any permutation  such that $\Des(\pi) = \alpha$
In particular, $c_{\alpha,\beta}\ge 0$ for all $\alpha,\beta\in\Comp(n)$. One consequence of \eqref{E:L-identity} is $\sum_{\beta \in\Comp(n)} c_{\alpha,\beta} = \sum_{\tau\in\perm_n} c_{\Des(\pi\tau^{-1}), n} = \sum_{\sigma\in\perm_n} c_{\Des(\sigma), n}$, which means every row of the matrix $c$ has the same (positive) sum. 

Let us show that the sum of the row indexed by $1^n$ is $\eps^n$. We have
\[
 \Qmap(M_{1^n}) = \Qmap(F_{1^n}) = \sum_{\alpha\in\Comp(n)} c_{1^n,\alpha} F_\alpha  = \sum_{\alpha\in\Comp(n)} c_{1^n,\alpha} \sum_{\beta\ge \alpha} M_\beta.
\]
The coefficient of $M_{1^n}$ in the last expression is $\sum_{\alpha\in\Comp(n)} c_{1^n,\alpha}.$ On the other hand, by \eqref{E:univ} this coefficient is $\pair{X^\Qmap}{M_{1}}^n = \eps^n$. 

An interesting consequence is the identity
\begin{equation}
\sum_{\sigma\in\perm_n} c_{\Des(\sigma),n} = \eps^n,
\end{equation}
which in turn implies $\eps^n > 0$ and
\begin{equation}\label{E:QSf-simplified}
\QSf_\Qmap(\pi) =  \Kbar(\Des(\pi),n).
\end{equation}

To complete the proof of Theorem~\ref{T:end-markov}, it remains to show that $\Kbar$ lumps $K$ by descents. This requires checking that for any $\beta\in\Comp(n)$,
\begin{equation}\label{E:lump-identity}
 \sum_{\sigma\in\perm_n : \Des(\sigma) = \beta} c_{\Des(\pi\sigma^{-1}),n} = \sum_{\sigma\in\perm_n: \Des(\sigma) = \beta} c_{\Des(\tau\sigma^{-1}),n}
 \end{equation}
for all $\pi,\tau\in\perm_n$ such that $\Des(\pi) = \Des(\tau)$, and 
\[ \frac{1}{\eps^n} c_{\Des(\pi),\beta} = \sum_{\sigma\in\perm_n: \Des(\sigma) = \beta} K (\pi,\sigma).\]
Both of these identities follow directly from \eqref{E:L-identity}, completing the proof.

\subsection{Stationary distribution}

\begin{theorem}\label{uniform-dist}
Suppose that the hypotheses of Theorem~\ref{T:end-markov} are satisfied and  that $c_{\alpha,n} > 0$ for some $\alpha\in\Comp(n)\setminus \{n, 1^n\}$. Then $K$ has a unique stationary distribution given by the uniform distribution on permutations, and $\Kbar$ has a unique stationary distribution equal to the distribution of descent sets in $\perm_n$.
\end{theorem}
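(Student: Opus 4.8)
The plan is to prove the statement about $\Kbar$ first and then transfer it to $K$ by the lumping relationship, since the latter is easier to handle combinatorially. By Theorem~\ref{T:end-markov} we already know $\Kbar$ is stochastic; by Remark~\ref{R:triangular} (applied to $\frac{1}{\eps^n}\Qmap_n$) the matrix $\Kbar$ is lower triangular with respect to any linear extension of the refinement order on $\Comp(n)$, hence its eigenvalues are the diagonal entries $\Kbar(\alpha,\alpha) = c_{\alpha,\alpha}/\eps^n$. Since $\Qmap$ is a Hopf algebra endomorphism, \eqref{E:univ} forces the diagonal coefficient $c_{\alpha,\alpha}$ to be the product $\pair{X^\Qmap}{M_{a_1}}\cdots\pair{X^\Qmap}{M_{a_k}}$ over the parts $a_i$ of $\alpha$; in particular $c_{1^n,1^n} = \eps^n$, so $\Kbar(1^n,1^n) = 1$, and more generally $\Kbar(\alpha,\alpha) = \prod_i \pair{X^\Qmap}{M_{a_i}}/\eps^{a_i}$. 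The key point will be to show that under the hypothesis $c_{\alpha_0,n}>0$ for some $\alpha_0 \notin\{n,1^n\}$, every eigenvalue $\Kbar(\alpha,\alpha)$ with $\alpha \ne 1^n$ has absolute value strictly less than $1$, so that the only eigenvalue on the unit circle is the simple eigenvalue $1$ coming from $\alpha = 1^n$ — wait, but $\Kbar(n,n)$ should also equal... actually $\Kbar(n,n) = \pair{X^\Qmap}{M_n}/\eps^n$, and the hypothesis does not immediately force this to be small. So the cleaner route is the following.

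Rather than chase eigenvalues on the diagonal, I would argue directly that $\Kbar$ is irreducible, or at least that it has a unique closed communicating class containing $(n)$, and identify the stationary distribution explicitly. The distribution $\pi_0$ on $\Comp(n)$ assigning to $\alpha$ the value $|\{\sigma\in\perm_n:\Des(\sigma)=\alpha\}|/n!$ is stationary for $\Kbar$ because $K$ — the random walk $\pi\mapsto\sigma\pi$ with $\sigma^{-1}$ drawn from $\QSf_\Qmap$ — is a random walk on the group $\perm_n$ and hence has the uniform distribution as a stationary distribution, and $\Kbar$ lumps $K$, so the pushforward of uniform under $\Des$ is stationary for $\Kbar$; this uses only Theorem~\ref{T:end-markov}. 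For uniqueness I would show $K$ is irreducible: the support of $\QSf_\Qmap$ is $\{\sigma:\Des(\sigma)\le\alpha_0^{*}\}$-ish — more precisely $\QSf_\Qmap(\sigma) = \Kbar(\Des(\sigma),n) > 0$ exactly when $c_{\Des(\sigma),n}>0$, and by \eqref{E:univ} $c_{\gamma,n}>0$ iff every part of $\gamma$ can be "seen" by the character, i.e. iff $\prod_i \pair{X^\Qmap}{M_{g_i}}>0$ for the parts $g_i$ of $\gamma$. The hypothesis gives us one such $\gamma=\alpha_0$ with at least two parts, one of which is $\ge 2$ — no: $\alpha_0 \ne 1^n$ means some part is $\ge 2$, and $\alpha_0\ne n$ means $\alpha_0$ has $\ge 2$ parts. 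From $c_{\alpha_0,n}>0$ and multiplicativity we deduce $\pair{X^\Qmap}{M_j}>0$ for some $j\ge 2$ and (taking $j'=1$, which is always $>0$ since $\pair{X^\Qmap}{M_1}=\eps>0$) we get $c_{\gamma,n}>0$ for every composition $\gamma$ all of whose parts lie in $\{1,j\}$. The support of $\QSf_\Qmap$ therefore contains permutations with those descent compositions, and I would argue this set generates $\perm_n$ as a group (or that the walk is irreducible on $\perm_n$): having an element with a descent at position $j$ surrounded by ascents, together with conjugates/products, suffices to move between any two permutations. Irreducibility of $K$ on the group then gives uniqueness of the uniform stationary distribution, and lumping transfers uniqueness to $\Kbar$ with stationary distribution the descent-set distribution.

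The main obstacle I anticipate is the group-generation (irreducibility) step: translating "$\pi\mapsto\sigma\pi$ with $\sigma$ of descent composition supported on parts in $\{1,j\}$" into a proof that the walk connects all of $\perm_n$. The cleanest way is probably to show the support of $\QSf_\Qmap$ contains a generating set of $\perm_n$ (e.g. produce an adjacent transposition or a small-support cycle as a word in allowed $\sigma$'s), or alternatively to bypass irreducibility of $K$ entirely and work with $\Kbar$: show that from any composition $\alpha$ one can reach $(n)$ in finitely many $\Kbar$-steps (lower-triangularity makes "moving up toward $1^n$" easy, but we need to come back down, which is exactly where the positivity of $c_{\gamma,n}$ for $\gamma$ with a part $j\ge2$ is used, since one $\Kbar$-step from $1^n$ lands on $\gamma$ with positive probability and repeated coarsening then reaches $(n)$). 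Establishing that $(n)$ is reachable from every state, and that $(n)$ lies in a single recurrent class, then yields uniqueness of the stationary distribution for $\Kbar$ directly, and the $K$-statement follows since any stationary distribution of $K$ is constant on descent classes (it projects to the unique stationary distribution of $\Kbar$) and, being stationary for a random walk on a group whose support projects onto a generating-enough set, must be uniform. I would present the $\Kbar$-reachability argument as the core of the proof and derive the $K$-uniqueness as a corollary.
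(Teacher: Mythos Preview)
Your overall architecture---uniform is always stationary for a group random walk, uniqueness follows once the support of $\QSf_\Qmap$ generates $\perm_n$, and lumping transfers the result to $\Kbar$---is exactly the paper's strategy. The gap is in how you try to identify the support.

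The claim ``by \eqref{E:univ}, $c_{\gamma,n}>0$ iff $\prod_i \pair{X^\Qmap}{M_{g_i}}>0$'' is false. Equation \eqref{E:univ} describes $\Qmap$ in the \emph{monomial} basis, whereas $c_{\gamma,n}=\pair{X^\Qmap}{F_\gamma}$ is a fundamental-basis coefficient; $F_\gamma$ does not factor as a product of $M_{g_i}$'s, so no such multiplicativity holds. Concretely, for Stembridge's $\Theta$ one has $\pair{X^\Theta}{M_1}=2$ and $\pair{X^\Theta}{M_2}=0$, yet $c_{(1,2),3}=2>0$ (see Example~\ref{Stembridge-example}). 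So from $c_{\alpha_0,n}>0$ you cannot deduce $\pair{X^\Qmap}{M_j}>0$ for some part $j\ge 2$, and the rest of your support analysis collapses. (Relatedly, your abandoned first attempt assumed $\Kbar$ is triangular; it is not---triangularity holds only in the monomial basis, and conjugating by the upper-triangular $M\leftrightarrow F$ change of basis destroys it, as the $\Kbar^\Theta_3$ table shows.)

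The paper avoids describing the support altogether by proving a clean group-theoretic lemma: \emph{any single descent class other than $\{e\}$ or $\{w_0\}$ already generates $\perm_n$}. Since $\QSf_\Qmap$ is constant on descent classes, the hypothesis $c_{\alpha_0,n}>0$ with $\alpha_0\notin\{(n),1^n\}$ puts the entire class $\{\sigma:\Des(\sigma)=\alpha_0\}$ in the support, and that class alone generates. The proof of the lemma is elementary: if $\sigma$ lies in the class and the values $j,j+1$ are not adjacent in the word of $\sigma$, then $(j,j+1)\sigma$ lies in the same class, so $(j,j+1)$ is in the subgroup generated; one then chases maximal runs of consecutive values to pick up the remaining adjacent transpositions. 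This replaces your support calculation and makes the generation step immediate.

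Finally, the direction ``$\Kbar$ has a unique stationary distribution $\Rightarrow$ $K$ does'' that you sketch at the end does not work: a stationary distribution for $K$ projects to one for $\Kbar$, but nothing forces it to be constant on descent classes. You must argue irreducibility of $K$ directly (via generation) and then push the result \emph{down} to $\Kbar$, not the other way around.
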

\begin{proof}
The proof has two parts, namely show (1) that for $w \in \R[\perm_n]$ in which each permutation has nonnegative coefficient between 0 and 1 so that  these coefficients add up to 1, the Markov chain resulting from left action by $w$ has a one dimensional space of fixed vectors spanned by the vector in which all coordinates are equal, provided that the set of permutations  appearing with nonzero coefficient in $w$ generate $\perm_n$, and (2) that any single descent  class other than those containing only the identity or only the longest element in $\perm_n$ will  meet this condition of generating $\perm_n$. Our added assumption about $c_{\alpha,n}$ implies that there exists some $\tau\in\perm_n$, which is neither identity nor the longest permutation, such that $\QSf_\Qmap(\tau)> 0$. Since $K^T$ is the matrix for the left action of $\sum_{\sigma\in\perm_n} \QSf_\Qmap(\sigma) \sigma$ on $\perm_n$, it follows from (1) and (2) that $K^T$, and hence $K$, has unique stationary distribution equal to the uniform distribution on permutations. Our assertion about $\Kbar$ follows from elementary properties of lumping.

First we prove (1). Each permutation $\sigma $
appearing with nonzero coefficient $a_{\sigma }$ in $w$ gives a left 
action on $\perm_n$, so for each such $\sigma $
we make a directed graph with $n!$ vertices, using directed edges to 
indicate for each element of $\perm_n$ where it is sent by $\sigma$.   Now combine these graphs for the various $\sigma$ with
nonzero $a_{\sigma }$, i.e. using one vertex set of size $n!$ and the union of all directed edges
for all such $\sigma $.  Choosing a set of $\sigma $ that generate $\perm_n$ implies that this directed
graph has a directed path from each of its $n!$ vertices to all its other vertices.
For $u$ any eigenvector with eigenvalue 1, we have  
$w u = u$, which translates to an equation for each of the $n!$ coordinates in $u$.  
These equations are indexed by permutations, so consider the equation indexed by 
some $\gamma \in \perm_n$.   For $u= \sum_{\sigma \in \perm_n} b_{\sigma } \sigma$, this equation
may be written as 
\[ b_{\gamma } = \sum_{\sigma\in \perm_n}  a_{\sigma }b_{\sigma^{-1}(\gamma )},\]
namely a sum in which the nonzero terms exactly come from permutations at the tails of 
arrows with head at $\gamma $.  This 
expresses $b_{\gamma }$ as a convex combination (i.e. a weighted average) 
of these coefficients $b_{\sigma^{-1}(\gamma )}$, implying $b_{\gamma }$ is neither 
the smallest nor largest value among these coefficients unless all are 
equal.  However, chasing the directed graph around, we see that every permutation is expressed
as a convex combination of others in such a way that no coefficient
$b_{\gamma }$ may be smallest or largest among these coefficients, hence all
must be equal, so we are done. 
If the permutations do not generate $\perm_n$, then the graph will have multiple 
components (resulting from multiple orbits in the left action on $\perm_n$), and we get a basis for the eigenspace with eigenvalue 1 by taking as basis vectors the sums over permutations in any one orbit.

Now to (2).   Consider a permutation $\sigma $ in our chosen descent class $D$.  By assumption, there must be some values $j,j+1$ not appearing consecutively in the one-line notation expression for $\sigma $, this implies $(j,j+1)\sigma \in D$, implying $(j,j+1)$ is in the subgroup $G_D$  of $\perm_n$
generated by the elements of $D$.  Thus, we get all adjacent transpositions except those
$(i,i+1)$ for $i,i+1$ in consecutive positions in $\sigma $, but then consider a maximal segment of
consecutive values appearing consecutively in $\sigma $.  This segment must either be strictly 
increasing or strictly decreasing, and without loss of generality assume the former.  Thus, the 
segment takes the form $i,i+1,\dots ,i+r$, and by assumption we either have $i\ne 1$ or have 
$i+r\ne n$.  Thus, we may swap either the values $i-1,i$ or the values $i+r,i+r+1$ to obtain another
permutation $\pi'$  in our descent class which has strictly shorter segment and which 
may be used to show either that the adjacent transposition
$(i,i+1)$ is in $G_D$ or else that $(i+r,i+r+1)\in G_D$.   Continuing in this manner, one may show
that all adjacent transpositions are in $G_D$, implying $G_D = \perm_n$, as desired.
\end{proof}

\subsection{Lumping by peak sets and other statistics}
We establish variants of Theorem~\ref{T:end-markov} and Theorem~\ref{uniform-dist} from which the probabilistic interpretation of the $\Theta$-map given in \cite{BHW} can be deduced.
  
 \newcommand{\alhat}{\widehat{\alpha}}
 \newcommand{\behat}{\widehat{\beta}}
 \newcommand{\gahat}{\widehat{\gamma}}
 \newcommand{\Khat}{\widehat{K}}
  
Let $\Qmap\in \End(\QSym)$ and $n\ge 0$. Let us say that $\Qmap_n$ has the {\it partitioning property} if the following two conditions hold. First is that there exists an equivalence relation on $\Comp(n)$ such that $\Qmap(F_\alpha) = \Qmap(F_\beta)$ whenever $\widehat{\alpha} = \widehat{\beta}$, where $\widehat{\alpha}$ denotes the equivalence class containing $\alpha$. Let $E$ denote the set of equivalence classes and let $\phi_{\widehat{\alpha}} = \Qmap(F_\alpha)$ for all $\alhat\in E$. The second condition is that $\{\phi_{\widehat{\alpha}}\}_{\alhat\in E}$ is a basis for $\Theta(\QSym_n)$. If $\Qmap_n$ has full rank then it trivially has the partitioning property. A nontrivial example is the $\Theta$-map: the equivalence class containing $\alpha \in \Comp(n)$ is given by $\alhat = \{ \beta\in\Comp(n)~|~\Lambda(\beta) = \Lambda(\alpha)\}.$ An open question is whether there are any other nontrivial endomorphisms with the partitioning property.

Suppose that $\Qmap_n$ has the partitioning property. Then define $d_{\alhat,\behat} \in \field$ by 
\[ \Qmap(\phi_{\alhat})  =  \sum_{\behat \in E} d_{\alhat, \behat} \; \phi_{\behat}\]
and if $\eps^n \ne 0$ as well then define
\[\widehat{K}(\alhat, \behat) = \frac{1}{\eps^n} d_{\alhat, \behat}.\] 
For $\behat \in E$ define $Y_{\behat} \in \Sol_n$ by
\[Y_{\behat} = \sum_{\alpha\in\behat} Y_\alpha.\]
It follows from the duality between $\Qmap$ and $X^\Qmap$ (Proposition~\ref{P:transpose}) that
\begin{equation} \label{E:left-mult-eqclass} X^{\Qmap} \cdot Y_{\behat} = \sum_{\alhat\in E} d_{\alhat,\behat} Y_{\alhat}
\end{equation}
A direct consequence is the following:
\begin{proposition}  \label{P:ideal}
If $\Qmap_n$ has the partitioning property then the subspace of $\Sol_n$ spanned by $\{Y_{\alhat}\}_{\alpha\in E}$ is the right ideal $X^{\Qmap} \cdot \Sol_n$.
\end{proposition}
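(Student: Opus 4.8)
The plan is to prove the two inclusions separately: $X^{\Qmap}\cdot\Sol_n\subseteq V$ by a direct computation, and $V\subseteq X^{\Qmap}\cdot\Sol_n$ by a dimension count. Here I write $V$ for the span of $\{Y_{\alhat}\}_{\alhat\in E}$ in $\Sol_n$, and I recall that $X^{\Qmap}\cdot\Sol_n$ denotes the image of the operator $W\mapsto X^{\Qmap}\cdot W$ on $\Sol_n$, which by Proposition~\ref{P:transpose} is precisely the transpose of $\Qmap_n\colon\QSym_n\to\QSym_n$ under the pairing $\pair{\cdot}{\cdot}$.

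For the first inclusion I would begin by recording that the coefficients $c_{\gamma,\beta}$ of \eqref{E:cdef} depend on $\gamma$ only through its class $\gahat$: the hypothesis $\Qmap(F_\gamma)=\Qmap(F_{\gamma'})$ whenever $\gahat=\gahat'$ gives $c_{\gamma,\beta}=c_{\gamma',\beta}$ for every $\beta$ by linear independence of the fundamental basis, so one may write $c_{\gahat,\beta}$ unambiguously. Then, using Proposition~\ref{P:transpose} in the form $\pair{X^{\Qmap}\cdot Y_\beta}{F_\gamma}=\pair{Y_\beta}{\Qmap(F_\gamma)}=c_{\gamma,\beta}$ together with $\pair{Y_\alpha}{F_\beta}=\delta_{\alpha,\beta}$, expanding $X^{\Qmap}\cdot Y_\beta$ in the basis $\{Y_\gamma\}$ yields $X^{\Qmap}\cdot Y_\beta=\sum_{\gamma}c_{\gamma,\beta}Y_\gamma=\sum_{\gahat\in E}c_{\gahat,\beta}Y_{\gahat}\in V$ for every $\beta\in\Comp(n)$. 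Since $\{Y_\beta\}_{\beta\in\Comp(n)}$ spans $\Sol_n$, this gives $X^{\Qmap}\cdot\Sol_n\subseteq V$.

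For the reverse inclusion I would argue by dimension. The vectors $\{Y_{\alhat}\}_{\alhat\in E}$ have pairwise disjoint supports with respect to the basis $\{Y_\alpha\}$ of $\Sol_n$, hence are linearly independent, so $\dim V=|E|$. On the other hand, because $W\mapsto X^{\Qmap}\cdot W$ is the transpose of $\Qmap_n$, we have $\dim(X^{\Qmap}\cdot\Sol_n)=\rank\Qmap_n$, and $\rank\Qmap_n=|E|$ since the second condition of the partitioning property says $\{\phi_{\alhat}\}_{\alhat\in E}$ is a basis for the image of $\Qmap_n$. Two subspaces of the same finite dimension, one contained in the other, must coincide, which completes the proof. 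That $X^{\Qmap}\cdot\Sol_n$ is genuinely a right ideal needs no separate argument, since it is the image of a left-multiplication operator on the algebra $\Sol_n$ (using Solomon's theorem that $\Sol_n$ is closed under the group-algebra product).

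The place where care is needed — the "main obstacle," such as it is — is the reverse inclusion, and in particular the fact that it cannot be extracted from \eqref{E:left-mult-eqclass} alone. That identity only shows $X^{\Qmap}\cdot V\subseteq V$, and the span of $\{X^{\Qmap}\cdot Y_{\behat}\}_{\behat\in E}$ it produces is the column space of the matrix $(d_{\alhat,\behat})$, which a priori could be a proper subspace of $V$. It is passing instead through all of $\Sol_n$ and invoking the rank identity $\dim(X^{\Qmap}\cdot\Sol_n)=\rank\Qmap_n=|E|$ — the transpose statement of Proposition~\ref{P:transpose} combined with the second clause of the partitioning property — that forces equality; as a byproduct this shows the matrix $(d_{\alhat,\behat})$ is in fact invertible.
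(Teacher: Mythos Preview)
Your proof is correct and follows the same line as the paper, which offers only the terse justification ``A direct consequence [of \eqref{E:left-mult-eqclass}]'' without further detail. You have filled in exactly what is needed: the first inclusion from duality applied to each $Y_\beta$ (a refinement of \eqref{E:left-mult-eqclass}), and the reverse inclusion from the rank identity $\dim(X^{\Qmap}\cdot\Sol_n)=\rank\Qmap_n=|E|$, correctly observing that the second clause of the partitioning property is what makes this dimension count work.
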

Specializing to the $\Theta$-map, $X^\Theta\cdot\Sol_n$ is just Nyman's peak algebra \cite{N}, which was shown to be a right ideal in \cite{Sch, ABN} and implicitly in \cite{NC2}.  

Further implications of \eqref{E:left-mult-eqclass} are the following variants of Theorems~\ref{T:end-markov} and \ref{uniform-dist}:
\begin{theorem}\label{T:peak-lump}
Suppose that $\Qmap_n$ has the partitioning property and the numbers $c_{\alpha, n}, \alpha\in\Comp(n)$ are nonnegative and not identically zero. Then $\widehat{K}$ is a stochastic matrix and it lumps $K$ by equivalence classes.
\end{theorem}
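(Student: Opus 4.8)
The plan is to mimic the proof of Theorem~\ref{T:end-markov}, transferring all assertions through the duality between $\Qmap$ and left multiplication by $X^\Qmap$, but now working modulo the equivalence relation on $\Comp(n)$ furnished by the partitioning property. First I would observe that the numbers $d_{\alhat,\behat}$ are genuinely well-defined: since $\{\phi_{\behat}\}_{\behat\in E}$ is a basis for $\Qmap(\QSym_n)$ and $\Qmap(\phi_{\alhat}) = \Qmap(\Qmap(F_\alpha))$ again lies in $\Qmap(\QSym_n)$, the expansion coefficients $d_{\alhat,\behat}$ exist and are unique. Next I would record the dual statement \eqref{E:left-mult-eqclass}, which is already asserted in the excerpt as a consequence of Proposition~\ref{P:transpose}: pairing $Y_{\behat}$ with $\Qmap(F_\gamma)$ and using that $\Qmap(F_\gamma)$ depends only on $\widehat\gamma$, one gets $\pair{X^\Qmap\cdot Y_{\behat}}{F_\gamma} = \pair{Y_{\behat}}{\Qmap(F_\gamma)} = \sum_{\alpha\in\behat}\pair{Y_\alpha}{\phi_{\widehat\gamma}}$, and matching this against $\sum_{\alhat} d_{\alhat,\behat} Y_{\alhat}$ paired with $F_\gamma$ reduces to the defining expansion of $\Qmap(\phi_{\behat})$ in the $\phi$-basis, read off via the pairing $\pair{Y_{\alhat}}{\phi_{\behat}} = \pair{Y_{\alhat}}{\Qmap(F_\beta)}$.

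Granting \eqref{E:left-mult-eqclass}, stochasticity of $\widehat K$ follows almost formally from what is already proved for $\Kbar$. Summing \eqref{E:left-mult-eqclass} over the coefficient vectors, or equivalently collapsing the identity \eqref{E:L-identity} for the $c_{\alpha,\beta}$ along equivalence classes, one finds that each $d_{\alhat,\behat}$ is a nonnegative sum of $c_{\alpha',\beta'}$-type quantities, so $d_{\alhat,\behat}\ge 0$; and the row sums satisfy $\sum_{\behat} d_{\alhat,\behat} = \sum_{\behat}\sum_{\beta\in\behat} c_{\gamma,\beta} = \sum_{\beta\in\Comp(n)} c_{\gamma,\beta}$ for any representative $\gamma\in\alhat$, which equals $\eps^n$ by the row-sum computation carried out in the proof of Theorem~\ref{T:end-markov}. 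Hence $\widehat K(\alhat,\behat) = \eps^{-n} d_{\alhat,\behat}$ is nonnegative with row sums $1$.

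It then remains to check that $\widehat K$ lumps $K$ by equivalence classes, i.e.\ that the aggregated transition probabilities $\sum_{\sigma:\,\widehat{\Des(\sigma)}=\behat} K(\pi,\sigma)$ depend on $\pi$ only through $\widehat{\Des(\pi)}$ and equal $\widehat K(\widehat{\Des(\pi)},\behat)$. Using $K(\pi,\sigma) = \QSf_\Qmap(\sigma\pi^{-1})$ and the simplification \eqref{E:QSf-simplified}, this unwinds exactly as in the lumping step of Theorem~\ref{T:end-markov}: the sum over $\sigma$ in a fixed equivalence class $\behat$ becomes $\sum_{\beta\in\behat}\sum_{\sigma:\Des(\sigma)=\beta} c_{\Des(\pi\sigma^{-1}),n}/\eps^n = \sum_{\beta\in\behat} \Kbar(\Des(\pi),\beta)$, and by \eqref{E:left-mult-eqclass} this collapse of the $\beta$-index within $\behat$ is precisely $\widehat K(\widehat{\Des(\pi)},\behat)$, manifestly a function of $\widehat{\Des(\pi)}$ alone. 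I expect the main obstacle to be purely bookkeeping: verifying cleanly that summing the $c_{\alpha,\beta}$-identity \eqref{E:L-identity} over $\beta$ within an equivalence class is compatible with the $\phi$-basis expansion defining $d_{\alhat,\behat}$ — in other words, that the two ways of grouping (by the equivalence relation on the source side versus the target side) genuinely agree, which is where the second clause of the partitioning property (that $\{\phi_{\alhat}\}$ is a basis) is essential, and must be invoked to rule out spurious relations among the $\phi_{\alhat}$ that would make $d_{\alhat,\behat}$ ill-defined.
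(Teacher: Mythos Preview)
Your proposal is correct and follows precisely the route the paper indicates: the paper does not spell out a proof but simply states that Theorem~\ref{T:peak-lump} is a ``further implication of \eqref{E:left-mult-eqclass},'' i.e.\ that one should rerun the argument for Theorem~\ref{T:end-markov} after collapsing along equivalence classes, which is exactly what you do. One small slip: you write $K(\pi,\sigma)=\QSf_\Qmap(\sigma\pi^{-1})$, whereas the paper's convention gives $K(\pi,\sigma)=\QSf_\Qmap(\pi\sigma^{-1})$; your subsequent computation with $c_{\Des(\pi\sigma^{-1}),n}$ is nonetheless consistent with the correct formula, so this is only a typo. It may also be worth making explicit the identity $d_{\alhat,\behat}=\sum_{\beta\in\behat}c_{\alpha,\beta}$ (for any representative $\alpha\in\alhat$), obtained by expanding $\Qmap(\phi_{\alhat})=\sum_\gamma c_{\alpha,\gamma}\phi_{\gahat}$ and regrouping; this single line simultaneously yields nonnegativity, the row-sum computation, and the match with $\sum_{\beta\in\behat}\Kbar(\Des(\pi),\beta)$, streamlining the bookkeeping you flag as the main obstacle.
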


\begin{theorem} $\widehat{K}$ has a unique stationary distribution if and only if there is some $\alpha\in\Comp(n) \setminus \{n, 1^n\}$ such that $c_{\alpha,n}\ne 0$. In this case the stationary distribution is the lumped version of the uniform distribution on permutations.
\end{theorem}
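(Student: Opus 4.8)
The plan is to prove the two implications separately; I assume $n\ge 3$ throughout, since for $n\le 2$ one has $\Comp(n)\setminus\{n,1^n\}=\emptyset$ and the statement degenerates. First I would show the condition is \emph{sufficient}. Suppose $c_{\alpha,n}\ne 0$ for some $\alpha\in\Comp(n)\setminus\{n,1^n\}$. The standing hypothesis of Theorem~\ref{T:peak-lump} forces $c_{\alpha,n}\ge 0$, so in fact $c_{\alpha,n}>0$, and Theorem~\ref{uniform-dist} then gives that $K$ has the uniform distribution on $\perm_n$ as its unique stationary distribution. A finite Markov chain whose unique stationary distribution has full support is irreducible, so $K$ is irreducible; and by Theorem~\ref{T:peak-lump}, $\widehat{K}$ is a lumping of $K$, while a lumping of an irreducible chain is again irreducible. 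Hence $\widehat{K}$ has a unique stationary distribution, and it must be the pushforward of the uniform distribution under $\pi\mapsto\widehat{\Des(\pi)}$, since that pushforward is always a stationary distribution of the lumped chain. This is exactly the lumped version of the uniform distribution.

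Next I would prove \emph{necessity} by the contrapositive: assuming $c_{\alpha,n}=0$ for every $\alpha\in\Comp(n)\setminus\{n,1^n\}$, I will exhibit more than one stationary distribution for $\widehat{K}$. Let $w_0=(n,n-1,\dots,1)$ be the unique permutation with $\Des(w_0)=1^n$; then $w_0^{-1}=w_0$ and $\Dset(w_0\pi)=[n-1]\setminus\Dset(\pi)$, so left multiplication by $w_0$ induces the complementation involution $\alpha\mapsto\alpha^{c}:=\co([n-1]\setminus S_\alpha)$ on $\Comp(n)$. Since the identity $e$ and $w_0$ are the only permutations whose descent composition lies in $\{n,1^n\}$, the measure $\QSf_\Qmap$ is supported on $\{e,w_0\}$; by \eqref{E:QSf-simplified} put $p=c_{(n),n}/\eps^n$ and $q=c_{1^n,n}/\eps^n$, so $p,q\ge 0$ and $p+q=\QSf_\Qmap(e)+\QSf_\Qmap(w_0)=1$. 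Substituting $c_{\gamma,n}=0$ for $\gamma\notin\{n,1^n\}$ into \eqref{E:L-identity} restricts the summation index $\tau$ there to $\{\pi,w_0\pi\}$ and collapses the identity to $c_{\alpha,\beta}=\eps^n(p\,\delta_{\alpha,\beta}+q\,\delta_{\alpha^{c},\beta})$, that is,
\[
\Qmap(F_\alpha)=\eps^n\bigl(p\,F_\alpha+q\,F_{\alpha^{c}}\bigr)\qquad(\alpha\in\Comp(n)),
\]
so $\tfrac1{\eps^n}\Qmap_n=pI+qR$, where $R$ is the linear involution $F_\alpha\mapsto F_{\alpha^{c}}$ of $\QSym_n$.

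From this closed form the structure of $\widehat{K}$ follows from a finite case check on $p$ and $q$. If $p\ne q$ (which subsumes $q=0$ and $p=0$), then $pI+qR$ is invertible — its eigenvalues are $p+q=1$ and $p-q\ne 0$ — so $\{\Qmap(F_\alpha)\}_{\alpha}$ is a basis of $\QSym_n$, the equivalence relation underlying the partitioning property is trivial, $\lvert E\rvert=2^{n-1}$, and the induced involution $\iota\colon\alhat\mapsto\widehat{\alpha^{c}}$ on $E$ is just complementation, which is fixed-point-free since $S\ne[n-1]\setminus S$ for $n\ge 2$; then $\widehat{K}(\alhat,\gahat)=p\,\delta_{\alhat,\gahat}+q\,\delta_{\iota(\alhat),\gahat}$ exhibits $\widehat{K}$ as a block-diagonal matrix with $2^{n-2}$ two-by-two blocks (diagonal entries $p$, off-diagonal entries $q$), hence with at least $2^{n-2}\ge 2$ recurrent classes. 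If instead $p=q=\tfrac12$, then $pF_\alpha+qF_{\alpha^{c}}=pF_\beta+qF_{\beta^{c}}$ exactly when $\{\alpha,\alpha^{c}\}=\{\beta,\beta^{c}\}$, so the equivalence classes are the $2^{n-2}$ complementation pairs, $\iota$ is the identity on $E$, and $\widehat{K}$ is the $2^{n-2}\times 2^{n-2}$ identity matrix, again with at least $2^{n-2}\ge 2$ recurrent classes. In every case $\widehat{K}$ lacks a unique stationary distribution, completing the contrapositive.

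The main obstacle I foresee lies in the necessity half: first, recognizing that \eqref{E:L-identity} collapses to the closed form $\Qmap(F_\alpha)=\eps^n(pF_\alpha+qF_{\alpha^{c}})$; and second, pinning down the partitioning-property equivalence relation — hence $\lvert E\rvert$ and the shape of $\widehat{K}$ — in the symmetric case $p=q$, where the classes fuse in complementary pairs, as against the generic case $p\ne q$. Once that dichotomy is established, counting recurrent classes is routine, and the sufficiency half is essentially an appeal to Theorem~\ref{uniform-dist} together with the stability of irreducibility under lumping.
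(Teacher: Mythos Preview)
Your argument is correct. The paper does not supply a proof of this theorem; it simply presents it as a ``variant'' of Theorem~\ref{uniform-dist} following from \eqref{E:left-mult-eqclass}, so there is little to compare against beyond that framing. Your sufficiency argument---invoke Theorem~\ref{uniform-dist}, note that a finite chain with a unique full-support stationary distribution is irreducible, and that irreducibility survives lumping---is exactly the ``elementary properties of lumping'' the paper gestures at in the proof of Theorem~\ref{uniform-dist}. For the converse you go further than the paper does: you compute $\Qmap_n$ explicitly from \eqref{E:L-identity}, pin down the equivalence relation forced by the partitioning property (correctly observing that the two conditions together force $\alpha\sim\beta\Leftrightarrow\Qmap(F_\alpha)=\Qmap(F_\beta)$), and then read off the recurrent classes of $\widehat{K}$ in each of the cases $p\ne q$ and $p=q$. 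This is more than the paper supplies, and it is sound. Your caveat that the statement should be read with $n\ge 3$ is also well taken: for $n=2$ one has $\Comp(2)\setminus\{2,1^2\}=\emptyset$, yet when $0<q<p$ the $2\times 2$ matrix $\widehat{K}$ is irreducible, so the biconditional fails as literally stated.
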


These results can be interpreted for the $\Theta$-map as follows, elucidating the results of \cite[\S3.1]{BHW}.
The map $\frac{1}{2^n}\Theta|_{\Pi_n}$ is dual to the left action of  $\sum_{\sigma\in\perm_n} \QSf_\Theta(\sigma) \sigma = \frac{1}{2^{n-1}} \sum_{\sigma\in\perm_n: \Lambda(\sigma) = \emptyset} \sigma $ on the right ideal $X^\Theta \cdot \Sol_n$. Thus $\frac{1}{2^n}\Theta|_{\Pi_n}$ gives rise to a random walk on peak sets in which the peak set of a permutation $\pi$ steps to the peak set of $\sigma^{-1}\pi$ with probability $\QSf_\Theta(\sigma)$. The (unique) stationary distribution is the distribution of peak sets of random permutations.


\section{Spectral decomposition}\label{recursive-section}

This section describes the eigenvalues and gives a recursive process for constructing a basis of eigenvectors for all the nonzero eigenspaces. 
See Macdonald's book \cite[Ch.\ VI Section 4]{Mac} for an earlier instance of this type of recursive eigenvector construction arising in a different context. We show that our eigenvectors are primitive elements. 

\subsection{Eigenvalues and eigenvectors}

Given $\Qmap\in \End(\QSym)$, let
\[\eps_n = \pair{X^\Qmap}{M_n} \;\;\; \text{ and } \;\;\;
\eps_{\alpha} = \eps_{a_1} \cdots \eps_{a_k}\]
for every $n\ge 0$ and every composition $\alpha = (a_1, \ldots, a_k)$. By \eqref{E:univ}, $\eps_\alpha$ is the coefficient of $M_\alpha$ in the monomial expansion of $\Qmap(M_\alpha)$ and in particular $\eps_1 = \eps$. 

Since $\Qmap$ is triangular relative to the monomial basis (Remark~\ref{R:triangular}), we obtain the following description of its eigenvalues.

\begin{proposition}\label{P:eigenvalues}
For any $\Qmap\in\End(\QSym)$ and $n\ge 0$, the eigenvalues of $\Qmap_n$, taking into account multiplicities, are $(\eps_\alpha)_{\alpha\in\Comp(n)}$. 
\end{proposition}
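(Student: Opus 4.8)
The plan is to exploit the triangularity of $\Qmap_n$ with respect to the monomial basis, established in Remark~\ref{R:triangular}. First I would fix a linear extension $\beta^{(1)}, \beta^{(2)}, \ldots$ of the refinement partial order on $\Comp(n)$, so that $\beta^{(i)} \le \beta^{(j)}$ implies $i \le j$. Writing the matrix $M$ of $\Qmap_n$ with respect to the monomial basis $\{M_{\beta^{(i)}}\}$ ordered this way, Remark~\ref{R:triangular} says $M$ is lower triangular. Since the eigenvalues of a triangular matrix (with multiplicity, as roots of the characteristic polynomial) are exactly its diagonal entries, it remains only to identify those diagonal entries.

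The diagonal entry in the row/column indexed by $\alpha$ is, by definition of $M$, the coefficient of $M_\alpha$ in the monomial expansion of $\Qmap(M_\alpha)$. By \eqref{E:univ} in Proposition~\ref{P:universal}, this coefficient equals
\[
\sum_{\beta_1 \cdots \beta_m = \alpha,\; (|\beta_1|,\ldots,|\beta_m|) = \alpha} \pair{X^\Qmap}{M_{\beta_1}} \cdots \pair{X^\Qmap}{M_{\beta_m}},
\]
where the sum is over factorizations $\beta_1 \cdots \beta_m = \alpha$ whose block-length sequence $(|\beta_1|, \ldots, |\beta_m|)$ equals $\alpha$ itself. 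If $\alpha = (a_1, \ldots, a_k)$, the only such factorization is the one where $m = k$ and each $\beta_i = (a_i)$ is a single part; any coarser grouping would give a block-length sequence strictly coarser than $\alpha$, contradicting that it equals $\alpha$. Hence the coefficient is $\pair{X^\Qmap}{M_{a_1}} \cdots \pair{X^\Qmap}{M_{a_k}} = \eps_{a_1} \cdots \eps_{a_k} = \eps_\alpha$, exactly as claimed in the statement (this is already noted in the paragraph preceding Proposition~\ref{P:eigenvalues}).

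Putting these together: the eigenvalues of $\Qmap_n$, counted with algebraic multiplicity, are the diagonal entries $(\eps_\alpha)_{\alpha \in \Comp(n)}$. The only point requiring a little care — and the closest thing to an obstacle — is the bookkeeping that identifies the diagonal entry as precisely $\eps_\alpha$: one must confirm that the sum in \eqref{E:univ} contributing to the coefficient of $M_\alpha$ in $\Qmap(M_\alpha)$ collapses to the single all-singletons factorization, which follows from the observation that the map sending a factorization to its block-length composition only produces compositions $\ge \alpha$ in the refinement order, with equality forcing each block to be a single part. Everything else is the standard fact about triangular matrices, so the proof is short.
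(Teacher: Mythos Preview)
Your proof is correct and follows essentially the same approach as the paper: the paper simply invokes Remark~\ref{R:triangular} for triangularity in the monomial basis and notes (in the paragraph just before the proposition) that \eqref{E:univ} forces the diagonal entry at $\alpha$ to be $\eps_\alpha$. You have spelled out the same argument in more detail, including the verification that the only factorization contributing to the $M_\alpha$-coefficient of $\Qmap(M_\alpha)$ is the all-singletons one.
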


Conversely, by Theorem~\ref{T:matrix} some of the eigenvalues can be arbitrarily prescribed:
\begin{proposition}\label{P:u-eigen}
For any list of real numbers $u_1, u_2, u_3, \ldots$, there exists $\Qmap\in\End(\QSym)$ such that $\lambda_i = u_i$ for all $i$.
\end{proposition}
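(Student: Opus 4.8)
The plan is to translate the statement into the language of characters and then build the required endomorphism by a triangular recursion. By Proposition~\ref{P:universal} the assignment $\Qmap\mapsto X^\Qmap$ is a bijection between $\End(\QSym)$ and the set of characters of $\QSym$, and $\eps_n=\pair{X^\Qmap}{M_n}$; so it suffices, given an arbitrary sequence of reals $u_1,u_2,\dots$, to produce a single character $W$ of $\QSym$ with $\pair{W}{M_n}=u_n$ for all $n\ge1$, and then take $\Qmap$ to be the endomorphism with $X^\Qmap=W$. By Proposition~\ref{P:grouplike} a character is the same thing as a group-like series $W=\sum_{n\ge0}W_n\in\widehat{\Sol}$, and since $\{X_\alpha\}$ and $\{M_\beta\}$ are dual bases, $\pair{W}{M_n}=\pair{W_n}{M_n}$ is precisely the coefficient of $X_n$ when $W_n$ is expanded in the basis $\{X_\alpha\}_{\alpha\in\Comp(n)}$. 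Thus the problem becomes: build a group-like $W\in\widehat{\Sol}$ whose degree-$n$ component has prescribed $X_n$-coefficient $u_n$.

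To do this I would use the noncommutative power sums. Under the identification of $(\Sol,\star,\Delta_\Sol)$ with $\NSym$ sending $X_\alpha$ to $S^\alpha$, let $\Psi_n\in\Sol_n$ be the power sum of the first kind, determined by $\Psi_n=n\,X_n-\sum_{k=1}^{n-1}X_{n-k}\star\Psi_k$. Two standard facts from \cite{NC1}: each $\Psi_n$ is primitive for $\Delta_\Sol$; and, because every $X_{n-k}\star\Psi_k$ with $1\le k\le n-1$ is supported on compositions of length $\ge 2$, the $X_n$-coefficient of $\Psi_n$ equals $n$, which is invertible in characteristic $0$. Now work in the complete algebra $(\widehat{\Sol},\star)$, in which the $\star$-exponential of any element of $\prod_{m\ge1}\Sol_m$ converges, and put
\[ W=\exp_\star\!\Big(\sum_{m\ge1}t_m\,\Psi_m\Big) \]
for scalars $t_m$ to be determined. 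Since $\sum_m t_m\Psi_m$ is primitive, $W$ is group-like (in a complete graded connected Hopf algebra over a field of characteristic $0$ the exponential carries primitives to group-like elements). Expanding $W=\sum_{j\ge0}\tfrac1{j!}\big(\sum_m t_m\Psi_m\big)^{\star j}$ and collecting the degree-$n$ term, the only contribution involving $t_n$ is the linear term $t_n\Psi_n$, because for $j\ge2$ any product $\Psi_{m_1}\star\cdots\star\Psi_{m_j}$ of total degree $n$ has each $m_i\le n-1$. Hence the $X_n$-coefficient of $W_n$ equals $n\,t_n+q_n(t_1,\dots,t_{n-1})$ for a fixed expression $q_n$ in the earlier parameters (with $q_1=0$). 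Choosing $t_1=u_1$ and, inductively, $t_n=\tfrac1n\big(u_n-q_n(t_1,\dots,t_{n-1})\big)$ makes $\pair{W}{M_n}=u_n$ for every $n\ge1$, completing the construction; unwinding the reductions, the endomorphism $\Qmap$ attached to $W$ then satisfies $\eps_n=u_n$ for all $n$, i.e.\ $\lambda_i=u_i$.

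The only non-formal ingredient is the existence, in each degree $n$, of a primitive element of $(\Sol,\star,\Delta_\Sol)$ with nonzero $X_n$-coefficient, and this is precisely what $\Psi_n$ supplies; I regard this as the ``hard'' (though well-known) point, since $X_n$ itself is \emph{not} primitive for $n\ge2$, so some genuine structural input is unavoidable. If one prefers to avoid $\exp_\star$, the same argument runs degree by degree: set $W_1=u_1X_1$ (automatically primitive), and at stage $n$ first solve $\overline{\Delta}_\Sol(W_n)=\sum_{i=1}^{n-1}W_i\otimes W_{n-i}$ for some $W_n\in\Sol_n$ — solvable because truncations of group-like series always extend — and then add a suitable scalar multiple of $\Psi_n$ to arrange the correct $X_n$-coefficient; the remaining bookkeeping is routine.
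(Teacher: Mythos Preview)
Your argument is correct, but it takes a noticeably different route from the paper. The paper simply cites Theorem~\ref{T:matrix}: since $\QSym$ is a polynomial algebra freely generated by $\{M_\alpha : \alpha\in\mathcal{L}\}$ and every one-part composition $(n)$ is Lyndon, a character of $\QSym$ may be defined by assigning arbitrary values to the $M_\alpha$ for Lyndon $\alpha$; in particular one may set $\pair{X^\Qmap}{M_n}=u_n$ directly, with no recursion needed. Your approach instead works on the dual side, building a group-like series in $\widehat{\Sol}$ as the $\star$-exponential of a primitive series assembled from the noncommutative power sums $\Psi_n$, and then solving a triangular system for the coefficients $t_n$. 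Both arguments ultimately hinge on the freeness of $\QSym$ (equivalently, of $(\Sol,\star)$): the paper exploits it via the Lyndon generating set, while you exploit it via the existence of a primitive in each degree with nonzero leading term. The paper's route is shorter and avoids the exponential machinery; your route is more intrinsically Hopf-algebraic and would transplant to other graded connected Hopf algebras without first identifying a polynomial generating set.
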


The main theorem of this section is as follows:

\begin{theorem}\label{T:eigen}
Let $\Qmap\in\End(\QSym)$ and $n\ge 0$. Suppose that for every $m\le n$ such that $\eps_m\ne 0$, we have
\begin{equation} \label{E:eigen-cond}
 \text{\it $\eps_m \ne \eps_\beta$ for all $\beta\in\Comp(m)\setminus\{(m)\}.$}
 \end{equation}
Suppose also that $\rank(\Qmap_n )= \#\{\alpha\in\Comp(n)~|~\eps_\alpha\ne 0\}$.  Then $\Qmap_n$ is diagonalizable,  and by duality so is the operator $(X^\Qmap)_n$.
\end{theorem}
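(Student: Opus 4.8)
The plan is to diagonalize $\Qmap_n$ by exhibiting, for each composition $\alpha \in \Comp(n)$ with $\eps_\alpha \ne 0$, an eigenvector $v_\alpha$ with eigenvalue $\eps_\alpha$, and then showing that these vectors are linearly independent. Since $\rank(\Qmap_n) = \#\{\alpha : \eps_\alpha \ne 0\}$, the zero eigenspace has the correct dimension automatically, so producing enough independent eigenvectors with nonzero eigenvalues will force diagonalizability. The construction I would use is recursive on the length $\ell(\alpha)$ (or equivalently on $n$), following the model of Macdonald cited in the text. For $\alpha = (n)$, condition \eqref{E:eigen-cond} says $\eps_n$ is a simple eigenvalue of $\Qmap_n$ (its $\eps$-multiplicity among $\Comp(n)$ is one), and $\Qmap(M_n) = \eps_n M_n$ already gives the eigenvector $v_{(n)} = M_n$.

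For the recursive step, suppose eigenvectors $v_\beta$ have been constructed in lower degrees $m < n$. Given $\alpha = (a_1,\dots,a_k) \in \Comp(n)$ with $k \ge 2$ and $\eps_\alpha \ne 0$, I would try to produce $v_\alpha$ as a product (in $\QSym$) of the lower-degree eigenvectors $v_{(a_1)}, \dots, v_{(a_k)}$ — or more precisely, project $v_{(a_1)} \cdots v_{(a_k)}$ onto the $\eps_\alpha$-eigenspace. The key structural input is Proposition~\ref{P:universal}: the formula \eqref{E:univ} shows that $\Qmap$ acts triangularly on the monomial basis and, crucially for this step, that $\Qmap$ is "nearly multiplicative" in the sense controlled by the concatenation structure of compositions. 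Concretely, because $\Qmap$ is a Hopf algebra endomorphism, $\Qmap(v_{(a_1)}\cdots v_{(a_k)}) = \Qmap(v_{(a_1)})\cdots\Qmap(v_{(a_k)})$, and if each $v_{(a_i)}$ were an honest eigenvector of $\Qmap_{a_i}$ with eigenvalue $\eps_{a_i}$ we would get an eigenvector with eigenvalue $\prod \eps_{a_i} = \eps_\alpha$. The subtlety is that $\Qmap_{a_i}$ need not be diagonalizable — but condition \eqref{E:eigen-cond} only asks that $\eps_{a_i}$, when nonzero, be a \emph{simple} eigenvalue, which is exactly what is needed for the one-dimensional eigenspace $\field\, v_{(a_i)}$ to exist and be $\Qmap$-invariant. (If some $\eps_{a_i} = 0$, the product has eigenvalue $0$ and lands in the kernel, consistent with the rank hypothesis.)

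So the concrete construction is: for $\alpha$ with all $\eps_{a_i} \ne 0$, set $v_\alpha$ to be the component of $v_{(a_1)} \cdots v_{(a_k)}$ lying in the $\eps_\alpha$-generalized eigenspace of $\Qmap_n$; one checks using \eqref{E:univ} that the leading term (with respect to the refinement order) of $v_{(a_1)}\cdots v_{(a_k)}$ in the monomial basis is a nonzero multiple of $M_\alpha$, so that the $v_\alpha$ are triangular with respect to $\{M_\alpha\}$ and hence linearly independent. Triangularity also shows that $v_\alpha$ is not killed by passing to the $\eps_\alpha$-eigenspace. Finally, to upgrade "generalized eigenspace" to "eigenspace," I would argue that the triangular family $\{v_\alpha\}_{\eps_\alpha \ne 0}$ together with a basis of $\ker \Qmap_n$ (dimension $\#\{\alpha : \eps_\alpha = 0\}$ by the rank hypothesis) spans $\QSym_n$; since each $v_\alpha$ is a genuine eigenvector by the multiplicativity argument above, this is a basis of eigenvectors, so $\Qmap_n$ is diagonalizable. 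Diagonalizability of $(X^\Qmap)_n$ then follows because, by Proposition~\ref{P:transpose}, it is (the transpose of) $\Qmap_n$ acting on the dual space, and a matrix is diagonalizable iff its transpose is.

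The main obstacle I anticipate is verifying cleanly that the projection of $v_{(a_1)}\cdots v_{(a_k)}$ onto the $\eps_\alpha$-eigenspace is nonzero and has $M_\alpha$-leading term — i.e., that the triangularity survives the projection. This requires knowing that all other monomials $M_\gamma$ appearing in $v_{(a_1)}\cdots v_{(a_k)}$ satisfy $\gamma > \alpha$ (strict refinement), which follows from the fact that each $v_{(a_i)} \in \QSym_{a_i}$ has $M_{(a_i)}$ as its unique minimal monomial by the same recursive triangularity, combined with the shuffle-product formula for multiplying monomial quasisymmetric functions. A secondary point needing care is the bookkeeping when some $a_i$ have $\eps_{a_i} = 0$ versus all nonzero — but these fall into the kernel and are handled by the rank count rather than by explicit eigenvectors, so the argument localizes to the "fully nonzero" compositions.
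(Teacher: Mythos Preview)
Your approach has a fatal gap: the product in $\QSym$ is commutative, so $v_{(a_1)}\cdots v_{(a_k)}$ depends only on the multiset $\{a_1,\ldots,a_k\}$, not on the composition $\alpha=(a_1,\ldots,a_k)$. Thus $v_{21}=M_2M_1=M_1M_2=v_{12}$, and more generally you produce at most one eigenvector per \emph{partition} of $n$, whereas you need one per \emph{composition} with $\eps_\alpha\ne 0$. The triangularity claim you flag as the ``main obstacle'' is in fact false: the quasi-shuffle expansion of $M_{a_1}\cdots M_{a_k}$ contains $M_{(n)}$ (from merging all parts) as well as $M_\sigma$ for every rearrangement $\sigma$ of $\alpha$, so there is neither a unique minimal nor a unique maximal monomial, and in particular $M_\alpha$ is not a leading term in any useful sense. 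Your products of one-part eigenvectors land in the symmetric-function subalgebra $\Sym_n\subseteq\QSym_n$, and that subalgebra is simply too small.

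The paper sidesteps exactly this problem by passing to the dual side. It constructs eigenvectors $Z_m\in\Sol_m$ for the operator $W\mapsto X^\Qmap\cdot W$ via the recursion \eqref{E:eigen-recursion}, and then sets $Z_\alpha=Z_{a_1}\star\cdots\star Z_{a_k}$ using the \emph{noncommutative} concatenation product $\star$ on $\Sol$. Because $X^\Qmap$ acts as a $\star$-algebra map (this is where Proposition~\ref{P:compat} and the group-like property of $X^\Qmap$ enter), the product $Z_\alpha$ is an eigenvector with eigenvalue $\eps_\alpha$; and because $\star$ is noncommutative, different orderings of the parts give genuinely different elements, with $Z_\alpha$ triangular against the $X$-basis. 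So the multiplicativity idea you have is right, but it must be executed on the noncommutative side $(\Sol,\star)$ rather than in commutative $\QSym$.
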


\begin{example}
For the $\Theta$-map we have $\eps_m = 0$ if $m$ is even and $\eps_\beta = 2^{\ell(\beta)} \ne 2 = \eps_m$ whenever $m$ is odd and $\beta\in\Comp(m)\setminus\{(m)\}$. The dimension of $\Theta(\QSym_n)$ equals the Fibonacci number $f_n$ (with $f_0 = f_1 = f_2 = 1$) \cite{Ste}, or the number of compositions of $n$ with only odd parts. Thus $\Theta_n$ is diagonalizable for all $n$.
\end{example}

\begin{remark}
The Hopf subalgebra $\Pi = \Theta(\QSym)$ of $\QSym$ is Stembridge's {\em peak algebra}, and the diagonalizability of the restriction of $\Theta$ to $\Pi_n$ was proved in \cite{BHW} by a direct argument. 

More generally, a technique for diagonalizing operators on $\Sol$ that correspond to various transformations of alphabets for noncommutative symmetric functions is described in \cite[\S3 \& Note~5.20]{NC2}. A prominent example is the $A \mapsto (1-q)A$ transform, which yields the operator $W \mapsto X^{\Theta} \cdot W$ when $q = -1$ (cf.\ \cite[Section~3]{BHT}). The eigenvectors are constructed from a unique family of Lie idempotents. Here we describe a way to recursively construct eigenvectors (for a broader family of operators) that turn out to be Lie quasi-idempotents and satisfy a uniqueness property. 
\end{remark}

Our construction of eigenvectors is as follows. For each $m$ such that $\eps_m\ne 0$ and \eqref{E:eigen-cond} holds, define $Z_m\in\Sol_m$ recursively by $\pair{Z_m}{M_m} = 1$ and 
\begin{equation}\label{E:eigen-recursion}
 \pair{Z_m}{M_\beta}  =  \frac{1}{\eps_m - \eps_\beta} \; \sum_{\alpha < \beta} 
\pair{Z_m}{M_\alpha} \pair{X^\Qmap \cdot X_\alpha}{M_\beta}  \text{ if $\beta\in\Comp(m)\setminus\{(m)\}$.}
\end{equation}
For $\alpha = a_1 \ldots a_k \in \Comp(n)$ such that $Z_{a_1}, \ldots, Z_{a_k}$ are all defined, define $Z_\alpha\in\Sol_n$ by
 \[Z_\alpha = Z_{a_1} \star \cdots \star Z_{a_k}.\]
Note that if $Z_\alpha$ is defined then $\eps_\alpha\ne 0$.
\begin{proposition}\label{P:eigenZ}
If $Z_\alpha$ is defined then
\begin{equation}\label{E:eigenZ}
X^\Qmap \cdot Z_\alpha = \eps_\alpha \, Z_\alpha.
\end{equation}
\end{proposition}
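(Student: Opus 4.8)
The plan is to reduce to the case where $\alpha$ consists of a single part, using the group-like property of $X^\Qmap$, and then to settle the single-part case by comparing coefficients against the monomial basis and invoking the defining recursion~\eqref{E:eigen-recursion}.

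\emph{Reduction to one part.} Write $\alpha = a_1 \ldots a_k$ and $n = |\alpha|$. Since $Z_\alpha = Z_{a_1} \star \cdots \star Z_{a_k} \in \Sol_n$ and multiplication in $\dhat$ is component-wise, $X^\Qmap \cdot Z_\alpha = (X^\Qmap)_n \cdot Z_\alpha$. By Proposition~\ref{P:grouplike}, $X^\Qmap$ is group-like for $\Delta_\Sol$, so iterating gives $\Delta_\Sol^{k}\big((X^\Qmap)_n\big) = \sum_{i_1 + \cdots + i_k = n} (X^\Qmap)_{i_1} \otimes \cdots \otimes (X^\Qmap)_{i_k}$. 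Applying Proposition~\ref{P:compat} with $G = (X^\Qmap)_n$ and $F_j = Z_{a_j}$, and noting that $(X^\Qmap)_i \cdot Z_{a_j}$ vanishes unless $i = a_j$ (the two factors lie in different homogeneous components of $\dhat$ otherwise), only the term with $i_j = a_j$ for all $j$ survives, so
\[ X^\Qmap \cdot Z_\alpha = (X^\Qmap \cdot Z_{a_1}) \star \cdots \star (X^\Qmap \cdot Z_{a_k}). \]
Granting the one-part identity $X^\Qmap \cdot Z_m = \eps_m Z_m$ (proved below), bilinearity of $\star$ then yields $X^\Qmap \cdot Z_\alpha = \eps_{a_1} \cdots \eps_{a_k}\,(Z_{a_1} \star \cdots \star Z_{a_k}) = \eps_\alpha Z_\alpha$. (Equivalently, one may induct on $k$ using only the case $r = 2$ of Proposition~\ref{P:compat}.)

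\emph{The one-part case.} Fix $m$ with $\eps_m \ne 0$ satisfying~\eqref{E:eigen-cond}, so that $Z_m$ is defined. Expand $Z_m = \sum_{\alpha \in \Comp(m)} \pair{Z_m}{M_\alpha}\, X_\alpha$ using that $\{X_\alpha\}$ is dual to $\{M_\beta\}$. Then for each $\beta \in \Comp(m)$,
\[ \pair{X^\Qmap \cdot Z_m}{M_\beta} = \sum_{\alpha \in \Comp(m)} \pair{Z_m}{M_\alpha}\, \pair{X^\Qmap \cdot X_\alpha}{M_\beta}. \]
By Proposition~\ref{P:leftmult} and the definition of $(\cdot)^\Qmap$, we have $\pair{X^\Qmap \cdot X_\alpha}{M_\beta} = \pair{X_\alpha}{\Qmap(M_\beta)}$, which by triangularity of $\Qmap$ (Remark~\ref{R:triangular}) vanishes unless $\alpha \le \beta$ and equals $\eps_\beta$ when $\alpha = \beta$. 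Separating the $\alpha = \beta$ term gives
\[ \pair{X^\Qmap \cdot Z_m}{M_\beta} = \eps_\beta \pair{Z_m}{M_\beta} + \sum_{\alpha < \beta} \pair{Z_m}{M_\alpha}\, \pair{X^\Qmap \cdot X_\alpha}{M_\beta}. \]
For $\beta = (m)$ the sum is empty and the right side is $\eps_m \cdot 1 = \eps_m \pair{Z_m}{M_m}$. For $\beta \in \Comp(m) \setminus \{(m)\}$, the recursion~\eqref{E:eigen-recursion} says precisely that the sum equals $(\eps_m - \eps_\beta)\pair{Z_m}{M_\beta}$, so the right side collapses to $\eps_m \pair{Z_m}{M_\beta}$. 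In all cases $\pair{X^\Qmap \cdot Z_m}{M_\beta} = \eps_m \pair{Z_m}{M_\beta}$; since this holds for every $\beta \in \Comp(m)$, we conclude $X^\Qmap \cdot Z_m = \eps_m Z_m$.

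\emph{Main obstacle.} There is no deep difficulty; the content is bookkeeping with the pairing. The step needing care is the reduction: one must correctly use group-likeness to express $\Delta_\Sol^{k}\big((X^\Qmap)_n\big)$ and observe that all mixed-degree terms drop out, so that left multiplication by $X^\Qmap$ distributes factor-by-factor over the $\star$-product $Z_\alpha$. The one-part computation is then just an unwinding of the recursion, with triangularity used to truncate the inner sum to $\alpha \le \beta$.
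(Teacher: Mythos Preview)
Your proof is correct and follows essentially the same approach as the paper: reduce to the one-part case via the fact that left multiplication by $X^\Qmap$ distributes over $\star$, then verify $X^\Qmap \cdot Z_m = \eps_m Z_m$ by expanding $Z_m$ in the $X$-basis, invoking triangularity, and applying the defining recursion~\eqref{E:eigen-recursion}. The only cosmetic difference is that the paper states the reduction in one line (``$X^\Qmap$ is dual to $\Qmap$, so it is an algebra map for $\star$''), whereas you spell this out explicitly via Propositions~\ref{P:grouplike} and~\ref{P:compat}---but that is precisely the content of the paper's claim.
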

\begin{proof}
Since $X^\Qmap$ is dual to $\Qmap$, it is an algebra map with respect to $\star$ and so $X^\Qmap \cdot Z_{a_1 \ldots a_k} = (X^\Qmap \cdot Z_{a_1}) \star \cdots \star (X^\Qmap \cdot Z_{a_k})$. Therefore it suffices to prove that $X^\Qmap \cdot Z_n = \eps_n \, Z_n$, or equivalently that $\pair{X^\Qmap \cdot Z_n}{M_\beta} = \eps_n \pair{Z_n}{M_\beta}$ for all $\beta \in \Comp(n)$. This is done as follows:
\begin{eqnarray*}
 \pair{X^\Qmap \cdot Z_n}{M_\beta} 
 & = & \pair{X^\Qmap \cdot \sum_{\alpha \in \Comp(n)} \pair{Z_n}{M_\alpha} X_\alpha}{M_\beta} \\
 & = & \sum_{\alpha \le \beta} \pair{Z_n}{M_\alpha} \pair{X^\Qmap \cdot X_\alpha}{M_\beta} \\
 & = & \eps_\beta \pair{Z_n}{M_\beta} + \sum_{\alpha < \beta}
      \pair{Z_n}{M_\alpha} \pair{X^\Qmap \cdot X_\alpha}{M_\beta} \\
 & = & \eps_\beta \pair{Z_n}{M_\beta} + (\eps_n - \eps_\beta) \pair{Z_n}{M_\beta} \\
 & = & \eps_n \pair{Z_n}{M_\beta}.
\end{eqnarray*}
In the second equality, the sum is over $\alpha\le\beta$ because the operator $X^\Qmap$ is triangular in the $X$-basis (see Remark~\ref{R:triangular}).
\end{proof}

To finish the proof of Theorem~\ref{T:eigen}, note that because of our assumption about $\rank(\Qmap_n)$, it suffices to exhibit a set of linearly independent eigenvectors with cardinality equalling the number of nonzero eigenvalues counted with multiplicities. If \eqref{E:eigen-cond} holds for every $m \le n$ such that $\lambda_m\ne 0$,  then $Z_\alpha$ is defined whenever $\eps_\alpha\ne 0$. Moreover, the $Z_\alpha$'s are linearly independent because of their triangular relation to the $X$-basis. This completes the proof.

\begin{remark}
The recursion \eqref{E:eigen-recursion} was designed to make the calculation in the previous proof go through. However, it is easy to see that \eqref{E:eigen-recursion} is also a necessary condition for $Z_n$ to be an eigenvector with eigenvalue $\eps_n$.  The following makes this precise.
\end{remark}

\begin{proposition}
Let $\Qmap\in\End(\QSym)$ and $m\ge 0$. Suppose that $\eps_m\ne 0$ and
\eqref{E:eigen-cond} holds, and that $Z'_m$ is any element of $\Sol_m$ satisfying $X^\Qmap \cdot Z'_m = \eps_m\, Z'_m$ and $\pair{Z'_m}{M_m}\ne 0$. Then $Z'_m = \pair{Z'_m}{M_m} Z_m.$
\end{proposition}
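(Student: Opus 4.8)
The plan is to show that any eigenvector $Z'_m$ of $X^\Qmap$ with eigenvalue $\eps_m$ and $\pair{Z'_m}{M_m}\neq 0$ must, after normalizing so that $\pair{Z'_m}{M_m}=1$, satisfy exactly the defining recursion \eqref{E:eigen-recursion}; by induction on the poset $\Comp(m)$ this forces $Z'_m = Z_m$. So without loss of generality assume $\pair{Z'_m}{M_m}=1$ (the general case follows by rescaling, since the eigenvector equation is linear). Write $Z'_m = \sum_{\alpha\in\Comp(m)} \pair{Z'_m}{M_\alpha} X_\alpha$, using the $X$-basis.

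First I would compute $\pair{X^\Qmap\cdot Z'_m}{M_\beta}$ for an arbitrary $\beta\in\Comp(m)$, expanding exactly as in the proof of Proposition~\ref{P:eigenZ}: using that $X^\Qmap$ is triangular in the $X$-basis (Remark~\ref{R:triangular}), we get
\[
\pair{X^\Qmap\cdot Z'_m}{M_\beta} = \sum_{\alpha\le\beta} \pair{Z'_m}{M_\alpha}\,\pair{X^\Qmap\cdot X_\alpha}{M_\beta} = \eps_\beta\pair{Z'_m}{M_\beta} + \sum_{\alpha<\beta}\pair{Z'_m}{M_\alpha}\,\pair{X^\Qmap\cdot X_\alpha}{M_\beta},
\]
where I use $\pair{X^\Qmap\cdot X_\beta}{M_\beta}=\eps_\beta$ (the diagonal entry, which follows from \eqref{E:univ} since it is the coefficient of $M_\beta$ in $\Qmap(M_\beta)$, namely $\eps_\beta$). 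On the other hand, the eigenvector hypothesis gives $\pair{X^\Qmap\cdot Z'_m}{M_\beta} = \eps_m\pair{Z'_m}{M_\beta}$. Equating the two expressions yields $(\eps_m-\eps_\beta)\pair{Z'_m}{M_\beta} = \sum_{\alpha<\beta}\pair{Z'_m}{M_\alpha}\,\pair{X^\Qmap\cdot X_\alpha}{M_\beta}$.

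Now I would argue by induction on $\beta$ in the partial order on $\Comp(m)$ that $\pair{Z'_m}{M_\beta} = \pair{Z_m}{M_\beta}$. The base case is $\beta=(m)$, the minimum element, where both sides equal $1$ by normalization. For $\beta\neq(m)$, condition \eqref{E:eigen-cond} guarantees $\eps_m\neq\eps_\beta$, so we may divide: $\pair{Z'_m}{M_\beta} = \frac{1}{\eps_m-\eps_\beta}\sum_{\alpha<\beta}\pair{Z'_m}{M_\alpha}\,\pair{X^\Qmap\cdot X_\alpha}{M_\beta}$. By the inductive hypothesis each $\pair{Z'_m}{M_\alpha}$ with $\alpha<\beta$ equals $\pair{Z_m}{M_\alpha}$, so the right-hand side equals the right-hand side of \eqref{E:eigen-recursion}, which is precisely $\pair{Z_m}{M_\beta}$. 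Hence $Z'_m = Z_m$, and in the unnormalized case $Z'_m = \pair{Z'_m}{M_m}\,Z_m$.

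I do not anticipate a serious obstacle here; the argument is essentially the proof of Proposition~\ref{P:eigenZ} read in reverse. The one point deserving slight care is the identification of the diagonal matrix entry $\pair{X^\Qmap\cdot X_\beta}{M_\beta} = \eps_\beta$, which comes from combining Proposition~\ref{P:leftmult} (so that $X^\Qmap\cdot X_\beta = (X_\beta)^\Qmap$) with \eqref{E:univ} (the coefficient of $M_\beta$ in $\Qmap(M_\beta)$ is $\eps_\beta$); and making sure the induction is genuinely anchored at the unique minimal composition $(m)$ and propagates upward through the refinement order, which is exactly the order in which \eqref{E:eigen-recursion} is well-defined.
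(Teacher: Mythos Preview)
Your proof is correct and is exactly the argument the paper has in mind: the paper does not give a formal proof of this proposition, but the remark immediately preceding it says that the recursion \eqref{E:eigen-recursion} is also a necessary condition, obtained by reading the calculation in the proof of Proposition~\ref{P:eigenZ} in reverse. Your write-up carries this out precisely, including the careful justification of the diagonal entry $\pair{X^\Qmap\cdot X_\beta}{M_\beta}=\eps_\beta$ and the induction up the refinement order from the minimum $(m)$.
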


\begin{example} Assuming $Z_1, Z_2,$ and $Z_3$ are defined, we have
\[ Z_1 = X_1, \quad \quad Z_2 = X_2 - \frac{1}{2} X_{11}, \]
\[ Z_3 = X_3 + \frac{u_{12}}{u_3-u_1u_2} \, X_{12} + \frac{u_1 u_2 - u_3 - u_{12}}{u_3 - u_1 u_2} \,X_{21} + \frac{1}{3} X_{111}.\]
\end{example}

\subsection{Primitive elements}
An element $W\in\Sol_n$ is called a {\it primitive element} if $\Delta_\Sol(W) = 1\otimes W + W\otimes 1$. If $W$ is primitive then it is a Lie quasi-idempotent in the sense of \cite[Theorem~3.1]{NC2} (cf.\ \cite[Theorems~3.1 and 3.2]{R}).

\begin{proposition}
Let $\Qmap\in\End(\QSym)$ and $n\ge 0$, and suppose that $Z_m$ is defined for all $m\le n$. Then $Z_n$ is a primitive element.
\end{proposition}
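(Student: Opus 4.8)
The plan is to combine two ingredients established above. First, since $X^\Qmap$ is a character of $\QSym$, it is group-like for $\Delta_\Sol$ (Proposition~\ref{P:grouplike}), so by the remark following Proposition~\ref{P:transpose} the operator $L := X^\Qmap\cdot(-)$ on $\Sol$ is a Hopf algebra endomorphism of $(\Sol,\star,\Delta_\Sol)$; in particular it is a coalgebra map, $\Delta_\Sol\circ L = (L\otimes L)\circ\Delta_\Sol$, and it preserves the grading. Second, $L(Z_n) = \eps_n Z_n$ by Proposition~\ref{P:eigenZ}, and because $Z_n$ is defined we know $\eps_n\ne 0$ and that condition \eqref{E:eigen-cond} holds at $m=n$.

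Decompose the coproduct into bidegree-homogeneous pieces, $\Delta_\Sol(Z_n) = \sum_{i=0}^n A_i$ with $A_i\in\Sol_i\otimes\Sol_{n-i}$. Applying $\Delta_\Sol$ to $L(Z_n)=\eps_n Z_n$ and comparing the component of each bidegree $(i,n-i)$ gives $(L\otimes L)(A_i) = \eps_n A_i$ for every $i$. The crux is to show $A_i = 0$ when $0<i<n$. For this I would use that, by Remark~\ref{R:triangular} and duality, $L$ restricted to $\Sol_m$ is triangular in the $X$-basis with diagonal entries $(\eps_\gamma)_{\gamma\in\Comp(m)}$ (cf.\ Proposition~\ref{P:eigenvalues}), so that $L\otimes L$ on $\Sol_i\otimes\Sol_{n-i}$ is triangular in the basis $\{X_\gamma\otimes X_\delta\}$ — with respect to any linear extension of the product order on pairs $(\gamma,\delta)$ — with diagonal entries $\eps_\gamma\eps_\delta = \eps_{\gamma\delta}$. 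When $0<i<n$, the concatenation $\gamma\delta$ has at least two parts, hence lies in $\Comp(n)\setminus\{(n)\}$, so \eqref{E:eigen-cond} at $m=n$ (using $\eps_n\ne 0$) yields $\eps_{\gamma\delta}\ne\eps_n$. Thus $L\otimes L - \eps_n\,\mathrm{id}$ is triangular with nonzero diagonal, hence invertible on $\Sol_i\otimes\Sol_{n-i}$, forcing $A_i=0$.

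It then remains to identify the two surviving terms. Since $\Sol_0 = \field\cdot 1$, we may write $A_0 = 1\otimes B$ and $A_n = C\otimes 1$ with $B,C\in\Sol_n$, so $\Delta_\Sol(Z_n) = 1\otimes B + C\otimes 1$. Applying the counit of $(\Sol,\star,\Delta_\Sol)$ — which is the projection onto $\Sol_0$, annihilating $\Sol_m$ for $m\ge 1$ — to either tensor factor and invoking the counit axiom $(\epsilon\otimes\mathrm{id})\circ\Delta_\Sol = \mathrm{id} = (\mathrm{id}\otimes\epsilon)\circ\Delta_\Sol$ identifies $B = C = Z_n$. Hence $\Delta_\Sol(Z_n) = 1\otimes Z_n + Z_n\otimes 1$, so $Z_n$ is primitive.

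I expect the only genuine bookkeeping obstacle to be making the triangularity of $L\otimes L$ precise: one must fix a linear extension of the product partial order on pairs $(\gamma,\delta)$ and verify that the off-diagonal contributions to $(L\otimes L)(X_\gamma\otimes X_\delta)$ indeed lie strictly below $X_\gamma\otimes X_\delta$ in it. Everything else is short, and no hypothesis beyond "$Z_m$ defined for $m\le n$" is needed — in fact only the case $m=n$ of \eqref{E:eigen-cond} together with $\eps_n\ne 0$ is used.
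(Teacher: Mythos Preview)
Your argument is correct and is, at bottom, the same eigenvalue-separation idea as the paper's, but executed on the other side of the duality. The paper pairs $\Delta_\Sol(Z_n)$ against $Z^*_\alpha\otimes Z^*_\beta$ in the dual eigenbasis of $\QSym_n$, rewrites this as $\pair{Z_n}{Z^*_\alpha Z^*_\beta}$ via Hopf duality, and uses that $\Qmap$ is an algebra map so $Z^*_\alpha Z^*_\beta$ is an eigenvector with eigenvalue $\eps_{\alpha\cdot\beta}\ne\eps_n$ when both factors are nontrivial. You stay in $\Sol$, invoke that $L$ is a coalgebra map, and use triangularity of $L\otimes L$ in the $X$-basis to see that $\eps_n$ is not an eigenvalue on $\Sol_i\otimes\Sol_{n-i}$ for $0<i<n$. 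These are dual versions of one another.

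Your route has two small advantages worth noting. First, it genuinely uses only the case $m=n$ of \eqref{E:eigen-cond} together with $\eps_n\ne 0$, whereas the paper's proof needs the full hypothesis ``$Z_m$ defined for all $m\le n$'' in order to have the eigenbasis $(Z^*_\alpha)_{\alpha\in\Comp(n)}$ available. Second, you close the argument with the counit axiom to identify the coefficients on $1\otimes Z_n$ and $Z_n\otimes 1$; the paper's proof stops after showing the other components vanish and leaves this (easy) step implicit. The triangularity bookkeeping you flag is indeed routine: since $L(X_\gamma)\in\operatorname{span}\{X_{\gamma'}:\gamma'\ge\gamma\}$ with coefficient $\eps_\gamma$ on $X_\gamma$, the tensor product $L\otimes L$ is upper triangular in any linear extension of the product order with diagonal entries $\eps_\gamma\eps_\delta$.
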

\begin{proof}
Let $(Z^*_\alpha)_{\alpha\in\Comp(n)} \subseteq \QSym$ denote the dual basis of $(Z_\alpha)_{\alpha\in\Comp(n)}$, so that $\pair{Z_\alpha}{Z^*_\beta} = \delta_{\alpha,\beta}$. Then $(Z^*_\alpha)$ form a basis of eigenvectors for $\Qmap_n$.  For any pair of compositions $\alpha, \beta$ such that $|\alpha|+|\beta| = n$, we have 
\[\pair{\Delta_\Sol(Z_n)}{Z^*_\alpha \otimes Z^*_\beta} = \pair{Z_n}{Z^*_\alpha Z^*_\beta}\]
by the Hopf algebraic duality between $\Sol$ and $\QSym$. Notice that $Z^*_\alpha Z^*_\beta$ is an eigenvector of $\Qmap$ with eigenvalue $\eps_{\alpha} \eps_{\beta} = \eps_{\alpha\cdot\beta}$. This eigenvalue equals $\eps_n$ if and only if $\alpha = (n)$ or $\beta = (n)$. Therefore the expansion of $Z^*_\alpha Z^*_\beta$ in the basis $(Z^*_\gamma)$ involves the element $Z^*_n$ if and only if $\alpha$ or $\beta$ is $(n)$. 
\end{proof}

In \cite[Theorem~3.16]{NC2} it is shown how to obtain a family of orthogonal idempotents of the descent algebra from a sequence of Lie quasi-idempotents. In particular, such a family can be obtained from our sequence $(Z_n)_{n\ge 0}$. See \cite[p.\ 919 Remark]{Brown} for a related result on obtaining orthogonal idempotents of the descent algebra.


\section{Describing endomorphisms by explicit matrices}\label{explicit-matrix-section}
We will describe how endomorphisms of $\QSym$ can be represented by triangular matrices with polynomial entries. An algorithm is suggested for computing these polynomials.

\subsection{Matrices representing endomorphisms}
A composition is called {\em Lyndon} if it is lexicographically smaller than all of its nontrivial cyclic rearrangements. Let $\mathcal{L}$ denote the set of Lyndon compositions. The set $\{ M_\alpha~|~\alpha \in \mathcal{L}\}$ freely generates $\QSym$ as an algebra, that is, $\QSym = \field[M_\alpha~|~\alpha\in\mathcal{L}]$
 \cite[\S 6, Example~1]{H}
(cf.\ \cite[Corollary~2.2]{MR}). For example,
the composition $21$ is not Lyndon, but we can express $M_{21}$ uniquely as a polynomial  in the monomial functions indexed by Lyndon compositions:
\[M_{21} = M_1 M_2 - M_{1 2} - M_3.\]
We call this the Lyndon expansion of $M_{12}$.

Recall from \eqref{E:univ} that the entries of the matrix for $\Qmap$ relative to the monomial basis are polynomials in the values $\pair{X^\Qmap}{M_\alpha}$. Since $\pair{X^\Qmap}{\cdot} : \QSym \to \field$ is an algebra map, $\pair{X^\Qmap}{M_\alpha}$ is a polynomial in the values $\pair{X^\Qmap}{M_\beta}$ for $\beta\in\mathcal{L}$. To describe these polynomials,
we introduce commutative variables $u_\alpha, \alpha\in\mathcal{L}$ and define $A_n(\beta,n)$, where $\beta \in \Comp(n)$, to be the polynomial obtained by substituting $u_\alpha$ for $M_\alpha$ in the Lyndon expansion of $M_\beta$.
For instance, $A(21,3) = u_1 u_2 - u_{12} - u_3$.

For  $n\ge 1$ and $\alpha,\beta\in\Comp(n)$, let  $A_n(\beta, \alpha) = 0$ if $\alpha\not\le\beta$ and
\[ A_n(\beta, \alpha) = A_{a_1}(\beta_1,a_1) A_{a_2}(\beta_2, a_2) \cdots A_{a_k}(\beta_k, a_k) \]
if $\alpha\le\beta$, where $\beta_1, \ldots, \beta_k$ are the compositions such that $\beta = \beta_1 \cdots \beta_k$ and $\alpha = (|\beta_1|,|\beta_2|,\ldots, |\beta_k|)$.  In addition, we set $A_0 = (1)$. The matrices $A_1, A_2$, and $A_3$ are
\[ A_1 = \begin{tabular}{c|c}
& $1$ \\
\hline
$1$ & $u_1$ 
\end{tabular}
\;\;\;\;\;\;\;\;\;\;
A_2 = 
\begin{tabular}{c|cc}
   & $2$ & $11$  \\
   \hline 
     $2$ &  $u_2$ & $0$  \\
    $11$ & $\frac{1}{2} (u_1^2 - u_2)$ & $u_1^2$
\end{tabular} 
\]
\begin{equation} \label{E:d4matrix}
A_3 = \begin{tabular}{c|cccc}
    & $3$ & $12$ & $21$ & $111$  \\
    \hline 
      $3$ &  $u_3$ & $0$ & $0$ & $0$  \\
      $12$ & $u_{12}$ & $u_1 u_2$ & $0$ & $0$ \\
      $21$ & $u_1 u_2 - u_{12} - u_3$ & $0$ &  $u_1 u_2$ &  $0$ \\
      $111$ & $\frac{1}{6} u_1^3 - \frac{1}{2} u_1 u_2 + \frac{1}{3} u_3$ & 
          $\frac{1}{2} (u_1^2 - u_2) u_1$ & $\frac{1}{2}(u_1^2 - u_2) u_1$ & 
          $u_1^3$ 
\end{tabular}
\end{equation}

The preceding discussion leads to this refinement of Proposition~\ref{P:universal}:
\begin{theorem}\label{T:matrix}
For any assignment of real numbers to the variables $u_\alpha$, the linear operator $\Qmap$ on $\QSym$ defined by
\begin{equation}\label{E:matrix}
\Qmap(M_\beta) = \sum_{\alpha \le \beta} A_n(\beta,\alpha) M_\alpha \;\;\;\text{for all $n\ge 0$ and $\beta\in\Comp(n)$}
\end{equation}
is a endomorphism of $\QSym$. Moreover, every endomorphism of $\QSym$ has the form \eqref{E:matrix}.
\end{theorem}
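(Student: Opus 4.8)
The plan is to verify the two halves of Theorem~\ref{T:matrix} separately, leaning heavily on Proposition~\ref{P:universal} to translate everything into the language of characters. For the second assertion (every endomorphism has the form \eqref{E:matrix}), I would start from an arbitrary $\Qmap\in\End(\QSym)$ and use Proposition~\ref{P:universal} to write $\Qmap(M_\beta)$ as a sum over factorizations $\beta=\beta_1\cdots\beta_m$ with coefficient $\prod_i\pair{X^\Qmap}{M_{\beta_i}}$ on $M_{(|\beta_1|,\dots,|\beta_m|)}$. Since $\pair{X^\Qmap}{\cdot}$ is an algebra map and $\{M_\alpha:\alpha\in\mathcal L\}$ freely generates $\QSym$, each value $\pair{X^\Qmap}{M_\gamma}$ is the polynomial in the $\pair{X^\Qmap}{M_\alpha}$, $\alpha\in\mathcal L$, obtained by substituting into the Lyndon expansion of $M_\gamma$ — this is exactly how $A_n(\gamma,n)$ was defined. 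Then I would set $u_\alpha:=\pair{X^\Qmap}{M_\alpha}$ for $\alpha\in\mathcal L$, collect the contributions to a fixed $M_\alpha$ (grouping the factorizations $\beta=\beta_1\cdots\beta_k$ by the single coarser composition $\alpha=(|\beta_1|,\dots,|\beta_k|)$ they induce), and check that the resulting coefficient is precisely $A_n(\beta,\alpha)=\prod_j A_{a_j}(\beta_j,a_j)$, which vanishes unless $\alpha\le\beta$. This is essentially a bookkeeping identity: the multiplicativity of $\pair{X^\Qmap}{\cdot}$ lets the product over the fine factorization factor through the parts of $\alpha$.

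For the first assertion I would run the argument in reverse. Given an arbitrary real assignment to the variables $u_\alpha$, $\alpha\in\mathcal L$, first define a character: since $\QSym=\field[M_\alpha:\alpha\in\mathcal L]$ is a polynomial algebra, there is a unique algebra homomorphism $\zeta:\QSym\to\field$ with $\zeta(M_\alpha)=u_\alpha$ for $\alpha\in\mathcal L$, and $\zeta(1)=1$, so $\zeta$ is a character. Let $W\in\dhat$ be the series representing $\zeta$ under the pairing, and let $\Qmap$ be the endomorphism with $X^\Qmap=W$ provided by Proposition~\ref{P:universal}. By the computation above (now read left to right), this $\Qmap$ satisfies $\Qmap(M_\beta)=\sum_{\alpha\le\beta}A_n(\beta,\alpha)M_\alpha$, because $\zeta(M_\gamma)$ equals the Lyndon-expansion substitution $A_{|\gamma|}(\gamma,|\gamma|)$ evaluated at the chosen $u_\alpha$ — this is where the specific definition of the matrices $A_n$ is used. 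Hence the operator defined by \eqref{E:matrix} coincides with a genuine endomorphism, proving it is one.

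I expect the main obstacle to be purely notational rather than conceptual: carefully matching the factorization-indexed sum in \eqref{E:univ} with the block-product structure of $A_n(\beta,\alpha)$, and in particular confirming that the substitution rule ``replace $M_\alpha$ by $u_\alpha$ in the Lyndon expansion of $M_\beta$'' is well-defined and compatible with the recursive block formula. One should note that because distinct Lyndon compositions of the same size $n$ index algebraically independent generators, the polynomial $A_n(\beta,n)$ is unambiguous; and because $\pair{X^\Qmap}{M_\gamma}$ for $\gamma$ not Lyndon is forced by multiplicativity, no compatibility conditions among the $u_\alpha$ are needed — any real assignment works. A minor cross-check worth including is that the triangularity built into $A_n(\beta,\alpha)=0$ for $\alpha\not\le\beta$ is consistent with Remark~\ref{R:triangular}, which it is by construction.
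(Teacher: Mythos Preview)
Your proposal is correct and follows essentially the same approach as the paper: the paper presents Theorem~\ref{T:matrix} as a direct refinement of Proposition~\ref{P:universal}, with the proof being the discussion preceding the statement, and you have simply spelled out that discussion in both directions. One small clarification: when you speak of ``grouping the factorizations $\beta=\beta_1\cdots\beta_k$ by the coarser composition $\alpha$ they induce,'' note that for each $\alpha\le\beta$ there is exactly one such factorization (the cut points of $\beta$ are determined by the partial sums of $\alpha$), so no nontrivial grouping occurs---the coefficient of $M_\alpha$ is a single product $\prod_j\pair{X^\Qmap}{M_{\beta_j}}$, which is precisely $\prod_j A_{a_j}(\beta_j,a_j)$ under the substitution $u_\gamma\mapsto\pair{X^\Qmap}{M_\gamma}$.
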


\begin{example}\label{E:construct-endo}
We apply Theorem~\ref{T:matrix} to construct an endomorphism that satisfies the nonnegativity hypotheses of Theorem~\ref{T:end-markov} and hence gives rise to a $QS^*$-distribution. 

Let $c_{\alpha,\beta}$ be as in \eqref{E:cdef}. By a change of basis,
\[ c_{\alpha,n} = \sum_{\beta\ge\alpha} A_n(\beta,n). \]
For $n=3$, this gives
\[ \begin{array}{ll}
c_{3,3} = \frac{1}{6} u_1^3 + \frac{1}{2} u_1 u_2 + \frac{1}{3} u_3 \quad & 
 c_{12,3} = \frac{1}{6} u_1^3 - \frac{1}{2} u_1 u_2 + u_{12} + \frac{1}{3} u_3 \\
c_{21,3} = \frac{1}{6} u_1^3 + \frac{1}{2} u_1 u_2 - u_{12} - \frac{2}{3} u_3 \quad &
 c_{111,3} = \frac{1}{6} u_1^3 - \frac{1}{2} u_1 u_2 + \frac{1}{3} u_3 
 \end{array} \]
Let us choose a specialization of the variables so that the $c_{\alpha,3}$ are nonnegative and not identically zero. For instance, set $u_1 = 2, u_2 = 1/2, u_3 = 2, u_{12} = -1$, and assign arbitrary values to all other $u_\alpha$. Then $c_{3,3} = 5/2$, $c_{12,3} = 1/2$, $c_{21,3} = 3/2$, $c_{111,3} = 3/2$, and the corresponding $QS^*$-distribution is (using \eqref{E:QSf-simplified})
\[ \sum_{\sigma\in\perm_3} \QSf_\Qmap(\sigma) \sigma = \frac{1}{2^3} \cdot \sum_{\alpha\in\Comp(3)} c_{\alpha,3} \; Y_\alpha = \frac{5}{16} \cdot {\bf 123} + \frac{1}{16} \cdot ({\bf 213}+{\bf 312}) + \frac{3}{16} \cdot ({\bf 132} +  {\bf 231} + {\bf 321}).\]
We can quickly find the eigenvalues for the transition probability matrix $\Kbar$ of the lumped random walk by reading off the diagonal entries of $A_n$ and multiplying by $1/2^3$; they are $1, 1/4, 1/8, 1/8$.
\end{example}

\subsection{Algorithm for computing $A_n$}
One might hope for a reasonable algorithm for computing the polynomials appearing in the matrix $A_n$, which amounts to computing the polynomial expansion of an arbitrary $M_\alpha$ in terms of  those $M_\beta$ with $\beta\in\mathcal{L}$.   However, the Lyndon expansion for $M_\alpha $ is often exponential in length, precluding the existence of an efficient algorithm.

An inefficient recursive algorithm is as follows: for $\alpha = (a_1,\dots , a_l)$ choose the earliest $k$ such that $(a_k, a_{k+1},\dots , a_{l-1}, a_l)$ is Lyndon.  Then  $M_{(a_1,\dots ,a_{k-1})}M_{(a_k,\dots , a_l )}$ is a sum over all ways of quasi-shuffling 
$(a_1,\dots , a_{k-1})$ with $(a_k,\dots , a_l)$, in the sense of \cite{H}. This gives a straightening law on monomial quasisymmetric functions which may be used to reduce any monomial quasisymmetric function into a sum of products of  Lyndon ones in a finite number of steps;  one summand in $M_{(a_1,\dots , a_{k-1})}M_{( a_k,\dots , a_l )}$
is $M_{\alpha }$ while all other summands either have strictly fewer parts than 
$\alpha $ or else 
are lexicographically smaller of the same length as $\alpha $; in either case,
they are closer to Lyndon. 

%
%

\section{Connections to the QS-distribution and random walks of Bidigare, Hanlon, and Rockmore}\label{QS-section}

Throughout this section, let $(r_1, r_2, \ldots)$ be an infinite sequence of nonnegative real numbers summing to $1$. The {\it $QS$-distribution} on $\perm_n$ may be defined \cite[Theorem~2.1]{QS} as the probability distribution on $\perm_n$ in which a permutation $\pi$ is selected with probability 
\[\prob_{QS}(\pi) = F_{\Des(\pi^{-1})}(r_1, r_2, \ldots).\]

The link between the $QS$-distribution and the $QS^*$-distribution comes from the observation that the evaluation map on $\QSym$ sending $x_i$ to $r_i$ for all $i$ is a character, and so by Proposition~\ref{P:transpose} there is a unique $\Qmap\in\End(\QSym)$ such that 
\begin{equation}\label{E:evaluation}
\pair{X^\Qmap}{G(x_1, x_2, \ldots)}= G(r_1, r_2, \ldots) \quad \text{for all $G\in\QSym$.}
\end{equation}
This leads to the following result:

\begin{theorem}\label{QS-theorem} 
There exists  $\Qmap\in\End(\QSym)$ such that for every permutation $\pi$,
 \begin{equation} \label{E:QS}
  \QSf_\Qmap(\pi) = \prob_{QS}(\pi^{-1})
 \end{equation}
In other words, the transition probability matrix of the left random walk on $\perm_n$ driven by the $QS$-distribution is $K^\Qmap_n$, with $K^\Qmap_n(\pi,\sigma\pi) = \prob_{QS}(\sigma)$ for all $\sigma,\pi\in\perm_n$.
\end{theorem}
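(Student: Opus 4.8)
The plan is to take for $\Qmap$ precisely the endomorphism singled out by \eqref{E:evaluation}. Since the evaluation map $G \mapsto G(r_1, r_2, \ldots)$ is multiplicative on formal power series and sends $1$ to $1$, it is a character of $\QSym$; hence by Proposition~\ref{P:universal} (equivalently, by the universal property of the universal character $X$) there is a unique $\Qmap\in\End(\QSym)$ with $\pair{X^\Qmap}{G} = G(r_1, r_2, \ldots)$ for all $G\in\QSym$. Everything after this is a matter of unwinding definitions.

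First I would check that this $\Qmap$ satisfies the hypotheses of Theorem~\ref{T:end-markov}. Using the identity $\pair{X^\Qmap}{F_\alpha} = c_{\alpha,n}$ recorded just below \eqref{E:Xphi-c}, we get $c_{\alpha,n} = F_\alpha(r_1, r_2, \ldots)$, which is a nonnegative real number because $F_\alpha$ is a power series with nonnegative coefficients and the $r_i$ are nonnegative; moreover it is not identically zero, since $c_{(n),n} = F_{(n)}(r_1,r_2,\ldots)$ has the monomial $\sum_i x_i^n$ as a summand and $\sum_i r_i^n > 0$ because the $r_i$ are nonnegative with positive sum. Thus $\QSf_\Qmap$ is well defined and Theorem~\ref{T:end-markov} applies.

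Next I would pin down the normalization. Here $\eps = \pair{X^\Qmap}{M_1} = \sum_i r_i = 1$, so $\eps^n = 1$, and consequently $\sum_{\sigma\in\perm_n} c_{\Des(\sigma),n} = \eps^n = 1$ by the identity established along the way to Theorem~\ref{T:end-markov} (see also \eqref{E:QSf-simplified}). Therefore
\[ \QSf_\Qmap(\pi) = c_{\Des(\pi),n} = F_{\Des(\pi)}(r_1, r_2, \ldots) = F_{\Des((\pi^{-1})^{-1})}(r_1, r_2, \ldots) = \prob_{QS}(\pi^{-1}), \]
which is \eqref{E:QS}. Finally, substituting $\sigma^{-1}$ for the driving permutation in the defining relation $K^\Qmap_n(\pi,\sigma\pi) = \QSf_\Qmap(\sigma^{-1})$ gives $K^\Qmap_n(\pi,\sigma\pi) = \prob_{QS}(\sigma)$, which is the last assertion.

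I do not expect a substantial obstacle: once the evaluation character is identified, the argument is bookkeeping. The only points that require a moment's care are tracking the inverse in $\Des(\pi^{-1})$ in the definition of $\prob_{QS}$, and confirming that the normalizing sum equals exactly $1$ rather than some power of $\sum_i r_i$ — which is precisely where the standing hypothesis $\sum_i r_i = 1$ is used.
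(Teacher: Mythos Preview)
Your proof is correct and follows essentially the same route as the paper: take $\Qmap$ to be the endomorphism corresponding to the evaluation character, use $c_{\alpha,n}=\pair{X^\Qmap}{F_\alpha}=F_\alpha(r_1,r_2,\ldots)$, and invoke $\eps=\sum_i r_i=1$ to get $\QSf_\Qmap(\pi)=F_{\Des(\pi)}(r_1,r_2,\ldots)=\prob_{QS}(\pi^{-1})$. Your version is slightly more explicit in checking the nonnegativity hypothesis and in spelling out the final $K^\Qmap_n$ assertion, but there is no substantive difference.
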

\begin{proof} 
Choose $\Qmap$ so that \eqref{E:evaluation} holds. By \eqref{E:Xphi-c} and \eqref{E:QSf-simplified}, 
\[
F_{\Des(\pi)}(r_1, r_2, \ldots) = \pair{X^\Qmap}{F_{\Des(\pi)}}  = \eps^n \; \QSf_\Qmap(\pi).
\]
We have $\eps = \pair{X^\Qmap}{M_1} = M_1(r_1, r_2, \ldots) = \sum r_i = 1$, completing the proof.
\end{proof}

\begin{remark} The $QS$-distribution is an instance of a transformation of alphabets in the context of noncommutative symmetric functions \cite{NC2}. This viewpoint is developed in further in \cite[Section~3.6]{NC6}, where the main results of \cite{QS} are deduced from the theory of free quasisymmetric functions.
\end{remark}

Let us  describe more explicitly the entries of $K$ and $\Kbar$. Let $\delta_1, \delta_2, \ldots$ be i.i.d.\ random variables such that 
\[ \prob(\delta_i = j-1) = r_j \;\;\;\;\text{for $j$ a positive integer.}\]
Given a sequence of distinct numbers $\pi =(\pi_1, \ldots, \pi_n)$, let $\st(\pi)$ denote the standardization of $\pi$, that is, the unique permutation $(\sigma_1, \ldots, \sigma_n) \in \perm_n$ such that for all $i,j \le n$, $\pi_i < \pi_j$ if and only if $\sigma_i < \sigma_j$. For instance, $\st((-2,1,3,-4)) = (2,3,4,1).$ 

\begin{theorem} \label{T:Kmatrix-ashuffle}
Let $\Qmap$ be as in Theorem~\ref{QS-theorem}. For all $\sigma, \tau\in\perm_n$ and $\beta\in\Comp(n)$, we have
\[ K^\Qmap_n(\sigma,\tau) = \prob( \st((\sigma_1 + \delta_1 n, \sigma_2 + \delta_2 n, \ldots, \sigma_n + \delta_n n)) = \tau)\]
and
\[ \Kbar^\Qmap_n(\Des(\sigma), \beta) = \prob( \Des((\sigma_1 + \delta_1 n, \sigma_2 + \delta_2 n, \ldots, \sigma_n + \delta_n n)) = \beta).\]
\end{theorem}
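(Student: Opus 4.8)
The plan is to reduce the probabilistic statement to the formula $K^\Qmap_n(\pi,\sigma\pi) = \prob_{QS}(\sigma^{-1})$ from Theorem~\ref{QS-theorem}, together with the defining formula $\prob_{QS}(\rho) = F_{\Des(\rho^{-1})}(r_1,r_2,\ldots)$ for the $QS$-distribution. The key observation is that the random vector $(\sigma_1 + \delta_1 n, \ldots, \sigma_n + \delta_n n)$, where the $\delta_i$ are i.i.d.\ with $\prob(\delta_i = j-1) = r_j$, is exactly a "randomized unshuffle": the $\delta_i$ record which of countably many packets the card in position $i$ lands in, and within a packet the relative order of the cards is preserved (this is the role of adding $\delta_i n$ rather than $\delta_i$ alone, which keeps the blocks separated while retaining the original order inside each block). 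Standardizing this vector thus produces the permutation obtained by an inverse $QS$-shuffle of $\sigma$.

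First I would make precise the combinatorial meaning of $\st((\sigma_1 + \delta_1 n, \ldots, \sigma_n + \delta_n n))$. Fix a realization $(d_1,\ldots,d_n)$ of $(\delta_1,\ldots,\delta_n)$. The positions $i$ get sorted first by the value of $d_i$ and, among positions with equal $d_i$, by the value of $\sigma_i$; reading off $\sigma$ in this sorted order and standardizing gives the resulting permutation $\tau$. The crucial point is a descent-set identity: given $\sigma$ and $\tau$, the set of sequences $(d_1,\ldots,d_n)$ producing $\tau$ is characterized by a condition on the descent set of $\tau^{-1}$ — namely, $d_{\tau^{-1}(1)} \le d_{\tau^{-1}(2)} \le \cdots \le d_{\tau^{-1}(n)}$ with a strict inequality forced exactly at the descents of $\tau^{-1}$ (wherever $\st$ is forced to "break" because $\sigma$ is decreasing there). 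Summing the probability $\prod_i r_{d_i+1}$ over all such weakly increasing sequences with the prescribed strict ascents reproduces precisely the defining sum for $F_{\Des(\tau^{-1})}(r_1,r_2,\ldots)$ in \eqref{E:F-formula}, after reindexing $i_j = d_{\tau^{-1}(j)} + 1$.

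Putting these together: for fixed $\sigma$, summing over all $\tau$ and matching descent sets shows $\prob(\st((\sigma_1+\delta_1 n, \ldots, \sigma_n + \delta_n n)) = \tau) = F_{\Des(\tau^{-1})}(r_1,r_2,\ldots) = \prob_{QS}(\tau\sigma^{-1})$, provided the argument is carried out relative to $\sigma$ (the presence of $\sigma$ in the starting vector has the effect of left-translating, so that one is really computing the distribution of $\tau\sigma^{-1}$). By Theorem~\ref{QS-theorem} this equals $K^\Qmap_n(\sigma,\tau)$, giving the first formula. The second formula then follows immediately by summing over all $\tau$ with $\Des(\tau) = \beta$ and invoking the lumping identity \eqref{E:lump-def}, since $\Des(\cdot)$ of the output vector depends only on which $\tau$ one lands in.

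The main obstacle I expect is the bookkeeping in the descent-set identity: one must verify carefully that the strict-ascent positions of the weakly increasing sequence $(d_{\tau^{-1}(1)}, \ldots, d_{\tau^{-1}(n)})$ are forced to include exactly $\Dset(\tau^{-1})$, and that distinct valid $(d_1,\ldots,d_n)$ are in bijection (via $j \mapsto d_{\tau^{-1}(j)}+1$) with the index tuples $i_1 \le \cdots \le i_n$ appearing in \eqref{E:F-formula} for $\alpha = \Des(\tau^{-1})$. This is a standard but delicate "standardization versus $P$-partition" argument; once it is in place, everything else is assembly from results already established.
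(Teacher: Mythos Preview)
Your approach is essentially the paper's: both compute $K^\Qmap_n(\sigma,\tau)=F_{\Des(\sigma\tau^{-1})}(r_1,r_2,\ldots)$ and then identify the event $\{\st((\sigma_1+\delta_1 n,\ldots,\sigma_n+\delta_n n))=\tau\}$ with a weakly increasing rearrangement of the $\delta_i$'s having strict inequalities at the descents of $\sigma\tau^{-1}$, hence the same probability (the paper sets $\pi=\sigma\tau^{-1}$ at the outset and invokes the i.i.d.\ property to match the two events, which streamlines the bookkeeping you flag as the ``main obstacle''). One slip to fix: the forced strict ascents in $(d_{\tau^{-1}(1)},\ldots,d_{\tau^{-1}(n)})$ occur at the descents of $\sigma\tau^{-1}$, not of $\tau^{-1}$ --- your parenthetical ``because $\sigma$ is decreasing there'' is right, but the resulting sum is $F_{\Des(\sigma\tau^{-1})}$, not $F_{\Des(\tau^{-1})}$; with that correction your chain $F_{\Des(\sigma\tau^{-1})}=\prob_{QS}(\tau\sigma^{-1})=K^\Qmap_n(\sigma,\tau)$ goes through exactly as in the paper.
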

\begin{proof}
Let $\pi = \sigma \tau^{-1}.$ By Theorem~\ref{QS-theorem},
\[ K^\Qmap_n(\sigma,\tau) =  F_{\Des(\pi)}(r_1, r_2, \ldots) = \sum_{i_1 \le i_2 \le \cdots \le i_n \atop i_k \in S_{\Des(\pi)} \implies i_k < i_{k+1}} r_{i_1} \cdots r_{i_n}.\]
In other words, $K(\sigma,\tau)$ is the probability that
\begin{equation}\label{E:delta1}
 \delta_1 \le \delta_2 \le \cdots \le \delta_n \;\;\;\text{ and } \;\;\; \delta_i < \delta_{i+1} \text{ whenever $i \in S_{\Des(\pi)}$.}
 \end{equation}
Now consider instead the probability that
$\st((\sigma_1 + \delta_1 n, \sigma_2 + \delta_2 n, \ldots, \sigma_n + \delta_n n))=\tau=\pi^{-1}\sigma.$ Let $i_1, \ldots, i_n$ be indices such that $\sigma_{i_j} = \pi_j$ for all $j$. Multiplying $\sigma$ on the left by $\pi^{-1}$ has the effect of replacing the occurrence of $\sigma_{i_j}$ in $(\sigma_1, \ldots, \sigma_n)$ by the value $j$. The resulting permutation equals $\st((\sigma_1 + \delta_1 n, \sigma_2 + \delta_2 n, \ldots, \sigma_n + \delta_n n))$ if and only if $\pi_1 + \delta_{i_1} n < \pi_2 + \delta_{i_2} n < \cdots < \pi_n + \delta_{i_n} n$, and this occurs if and only if
\begin{equation}\label{E:delta2}
\delta_{i_1} \le \delta_{i_2} \le \cdots \le \delta_{i_n} \;\;\;\text{and}\;\;\; \delta_{i_j} < \delta_{i_{j +1}} \text{ whenever $j \in S_{\Des(\pi)}$.}
 \end{equation}
Since the $\delta_i$'s are i.i.d.,  events \eqref{E:delta1} and \eqref{E:delta2} occur with equal probability. The formula for $\Kbar$ follows via lumping.
\end{proof}

The eigenvalues of $\Kbar$ can be determined immediately from Proposition~\ref{P:eigenvalues} (cf.\ \cite[Theorem~2.2]{QS} and references thereafter).

\begin{proposition}
Let  $\Qmap$ be as in Theorem~\ref{QS-theorem}. For any $n\ge 0$, the eigenvalues of $\Kbar$ are the power sum symmetric functions $p_\mu(r_1, r_2, \ldots)$ where $\mu$ ranges over all partitions of $n$. The multiplicity of $p_\mu(r_1, r_2, \ldots)$ is the number of different compositions obtainable by rearranging the parts of the partition $\mu$.
\end{proposition}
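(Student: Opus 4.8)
The plan is to combine Proposition~\ref{P:eigenvalues}, which says the eigenvalues of $\Qmap_n$ (counted with multiplicity) are exactly the products $\eps_\alpha = \eps_{a_1}\cdots\eps_{a_k}$ over $\alpha = (a_1,\ldots,a_k)\in\Comp(n)$, with the explicit identification of the $\eps_m$ coming from the evaluation character. Since $\Kbar^\Qmap_n$ is the transpose of $\frac{1}{\eps^n}\Qmap_n$ and here $\eps = 1$ (as computed in the proof of Theorem~\ref{QS-theorem}), the eigenvalues of $\Kbar^\Qmap_n$ are precisely the same multiset $\{\eps_\alpha : \alpha\in\Comp(n)\}$.

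The key computation is to show $\eps_m = p_m(r_1,r_2,\ldots)$, the $m$th power sum. By definition $\eps_m = \pair{X^\Qmap}{M_m}$, and by \eqref{E:evaluation} this equals $M_m(r_1,r_2,\ldots) = \sum_{i\ge 1} r_i^m = p_m(r_1,r_2,\ldots)$. Consequently, for $\alpha = (a_1,\ldots,a_k)$ we get $\eps_\alpha = \prod_{j=1}^k p_{a_j}(r_1,r_2,\ldots) = p_{\mu}(r_1,r_2,\ldots)$, where $\mu$ is the partition obtained by sorting the parts of $\alpha$ into weakly decreasing order. So each eigenvalue of $\Kbar^\Qmap_n$ is of the form $p_\mu(r_1,r_2,\ldots)$ with $\mu\vdash n$.

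To finish, I would address the multiplicities. Two compositions $\alpha,\alpha'$ yield the same eigenvalue-contribution $\eps_\alpha = \eps_{\alpha'}$ whenever they are rearrangements of each other, so grouping the multiset $\{\eps_\alpha\}_{\alpha\in\Comp(n)}$ by underlying partition, the eigenvalue $p_\mu(r_1,r_2,\ldots)$ occurs with multiplicity at least the number of distinct compositions obtainable by permuting the parts of $\mu$. To get equality one must rule out ``accidental'' coincidences $p_\mu(r_1,r_2,\ldots) = p_\nu(r_1,r_2,\ldots)$ for $\mu\ne\nu$; since the statement is about the eigenvalues as symmetric functions in the formal variables $r_1, r_2,\ldots$ (equivalently, for generic choices of the $r_i$), the power sums $p_\mu$ for distinct partitions $\mu\vdash n$ are linearly independent and in particular pairwise distinct, so no such coincidence occurs and the multiplicity is exactly as claimed.

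The main obstacle is essentially bookkeeping: one must be careful about whether the claimed multiplicity is a statement about eigenvalues as elements of the polynomial ring $\Q[r_1,r_2,\ldots]$ (where it holds on the nose by algebraic independence of power sums) versus as specific real numbers for a fixed sequence $(r_i)$ summing to $1$ (where degenerate specializations could merge distinct $p_\mu$). I would state the result at the level of formal symmetric functions, matching the phrasing of \cite[Theorem~2.2]{QS}; with that reading there is no real difficulty beyond invoking Proposition~\ref{P:eigenvalues} and the identity $\eps_m = p_m$.
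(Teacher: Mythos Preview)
Your proposal is correct and follows exactly the approach the paper indicates: the proposition is stated without a written proof, only the remark that the eigenvalues ``can be determined immediately from Proposition~\ref{P:eigenvalues}.'' Your argument fills in precisely those details---computing $\eps_m = M_m(r_1,r_2,\ldots) = p_m(r_1,r_2,\ldots)$ via \eqref{E:evaluation}, noting $\eps=1$, and grouping compositions by underlying partition---and your caveat about reading the multiplicity statement at the level of formal symmetric functions is both appropriate and matches the paper's parenthetical reference to \cite[Theorem~2.2]{QS}.
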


Bidigare, Hanlon, and Rockmore (BHR) introduced a class of random walks on chambers of central hyperplane arrangements \cite{BHR}, generalizing many shuffling and sorting schemes like the Tsetlin Library. Their work was developed further by Brown and Diaconis \cite{BD} and Brown \cite{Brown, Brown2}.  In some cases a BHR random walk is isomorphic to a right random walk on a finite reflection group. For instance Stanley showed that the transpose of the right random walk on $\perm_n$ driven by the $QS$-distribution is an instance of a BHR random walk on the chambers of the braid arrangement \cite{QS}. More generally, Theorem~8 of \cite{Brown} implies that if $W = \sum_{\sigma\in\perm_n} W(\sigma) \sigma$ is a probability distribution on $\perm_n$ such that $W\in \Sol_n$ and the expansion of $W$ in the basis $\{X_\alpha\}$ has nonnegative coefficients, then right multiplication by $W$ is isomorphic to a BHR random walk. Such a distribution $W$ will be called a {\it BHR-distribution.}

Not every $QS^*$-distribution is a BHR-distribution. The $\Theta$-map provides one family examples: for instance when $n=3$,
\[ (X^\Theta)_3 = 2^3 \cdot \sum_{\sigma \in \perm_3} \QSf_\Theta(\sigma)\sigma = 2 ( Y_3 + Y_{12} + Y_{111}) = 2 (X_3 - X_{21} + X_{111}).\]
The endomorphism $\Qmap$ considered in Example~\ref{E:construct-endo} provides another example:
\[ (X^\Qmap)_3 = 2^3 \cdot \left[\frac{5}{16} Y_3 + \frac{1}{16} Y_{12} + \frac{3}{16} (Y_{21} + Y_{111})\right] = 2 X_3 - X_{12} + \frac{3}{2} X_{111}. \]

Note that BHR random walks on $\perm_n$ are right random walks, whereas in this paper we are dealing with left random walks (encoded by $K$) and their lumped versions (encoded by $\Kbar$).  
This distinction might not matter much in practice,  since there is no difference between left and right random walks if we start at the identity permutation. Also, all of the associated matrices--left or right, lumped or not--that arise from a BHR-distribution on $\perm_n$ have the same eigenvalues, thanks to \cite[Theorem~8]{Brown} (cf.\ \cite[Theorem~3.12]{NC2}).

The references \cite{BHR, BrD, Brown} all give bounds on rates of convergence to the stationary distribution. Here we will not pursue the problem of estimating the convergence rates of $K$ or $\Kbar$,  although it would be interesting to see whether any techniques from those papers could be adapted to our setting.

\section{Connections to $a$-shuffles and the Tchebyshev operator}\label{a-section}

\subsection{a-shuffles}\label{ashuffle-section} Let $a$ be a positive integer. An {\it $a$-shuffle} of a deck of cards involves cutting the deck into $a$ (possibly empty)
packets according to the multinomial distribution and then letting cards fall one at a time from the bottoms of the packets into new pile, where the probability that the bottom card from a particular packet falls is proportional to the current size of the packet. When $a = 2$ this is just the standard Gilbert-Shannon-Reeds (GSR) model of riffle shuffling. See \cite{BD} details. 

A well-known formula due to Bayer and Diaconis \cite{BD} for the probability that the deck is in arrangement $\pi$ after an $a$-shuffle is
\begin{equation}\label{E:BD}
  \prob_a(\pi) = \binom{n+a-d(\pi^{-1}) - 1}{n}/a^n,
 \end{equation}
where $d(\pi)$ is the number of descents in $\pi$. 

 Let $\Psi_a$ denote the unique endomorphism of $\QSym$ such that $X^{\Psi_a} =  X \star \cdots \star X$ ($a$ terms), where $X$ is the universal character introduced in Section
 ~\ref{S:random-walks}. 
 This convolution character is discussed in \cite[Example~4.7]{ABS}, where an explicit formula for $\Psi_a$ is given.

\begin{proposition}\label{P:ashuffle}
  For every permutations $\pi$, 
\begin{equation}
\prob_{\Psi_a}(\pi) = \prob_{a}(\pi^{-1}) = \prob_{QS}(\pi^{-1}),
\end{equation} 
where the $QS$-distribution has $r_1 = r_2 = \cdots = r_a = \frac{1}{a}$ and $r_i = 0$ for $i>a$.
\end{proposition}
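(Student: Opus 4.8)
The plan is to compute the character $X^{\Psi_a}$ explicitly and then read off the associated $QS^*$-distribution via the machinery of Section~\ref{S:random-walks}, finally comparing the answer with the Bayer--Diaconis formula \eqref{E:BD} and, separately, with the $QS$-distribution. The point is that $X^{\Psi_a}$ is the evaluation character at the point whose first $a$ coordinates equal $1$ and whose remaining coordinates are $0$; granting this, the rest is bookkeeping with the formula $\QSf_\Qmap(\pi)=c_{\Des(\pi),n}\big/\sum_\sigma c_{\Des(\sigma),n}$ and the relation $c_{\alpha,n}=\pair{X^\Qmap}{F_\alpha}$ noted just before \eqref{E:Xphi-c}.

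First I would evaluate $X^{\Psi_a}=X\star\cdots\star X$ ($a$ factors) on the monomial basis. Since $\star$ on $\widehat{\Sol}$ is dual to the coproduct $\Delta_\calQ$, iterating the coproduct gives
\[ \pair{X^{\Psi_a}}{M_\alpha}=\sum_{\alpha=\beta_1\cdots\beta_a}\pair{X}{M_{\beta_1}}\cdots\pair{X}{M_{\beta_a}}, \]
the sum ranging over all ways to write $\alpha$ as a concatenation of $a$ (possibly empty) compositions. Because $\pair{X}{M_\beta}$ is $1$ when $\beta$ is empty or has a single part and $0$ otherwise, this sum counts the ways of placing the $\ell(\alpha)$ parts of $\alpha$, in order, into $a$ ordered slots with at most one part per slot; that count is $\binom{a}{\ell(\alpha)}$. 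Hence $\pair{X^{\Psi_a}}{M_\alpha}=\binom{a}{\ell(\alpha)}=M_\alpha(\underbrace{1,\ldots,1}_{a},0,0,\ldots)$, and therefore $c_{\alpha,n}=\pair{X^{\Psi_a}}{F_\alpha}=\sum_{\beta\ge\alpha}\binom{a}{\ell(\beta)}=F_\alpha(\underbrace{1,\ldots,1}_{a},0,0,\ldots)$ for $\alpha\in\Comp(n)$. These numbers are nonnegative and not all zero (for instance $c_{(n),n}=\sum_{\beta\in\Comp(n)}\binom{a}{\ell(\beta)}\ge a>0$), so Theorem~\ref{T:end-markov} applies; moreover $\eps=\pair{X^{\Psi_a}}{M_1}=\binom{a}{1}=a$, so $\sum_\sigma c_{\Des(\sigma),n}=\eps^n=a^n$ by the proof of Theorem~\ref{T:end-markov}. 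Consequently
\[ \QSf_{\Psi_a}(\pi)=\frac{1}{a^n}\,F_{\Des(\pi)}(\underbrace{1,\ldots,1}_{a},0,0,\ldots). \]

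Next I would identify this quantity with the two target probabilities. On one hand, $F_{\Des(\pi)}$ evaluated at $a$ ones counts the weakly increasing sequences $1\le i_1\le\cdots\le i_n\le a$ that strictly increase at each position of $S_{\Des(\pi)}=\Dset(\pi)$; the substitution $i'_k=i_k-\#\{s\in\Dset(\pi):s<k\}$ converts this into the number of weakly increasing length-$n$ sequences with values in $\{1,\ldots,a-d(\pi)\}$, namely $\binom{n+a-d(\pi)-1}{n}$, where $d(\pi)=|\Dset(\pi)|$. Thus $\QSf_{\Psi_a}(\pi)=\binom{n+a-d(\pi)-1}{n}/a^n$, which by \eqref{E:BD} applied to $\pi^{-1}$ (note $d((\pi^{-1})^{-1})=d(\pi)$) is exactly $\prob_a(\pi^{-1})$. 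On the other hand, with $(r_1,r_2,\ldots)$ as in the statement, homogeneity of $F_{\Des(\pi)}$ in degree $n$ yields
\[ \prob_{QS}(\pi^{-1})=F_{\Des(\pi)}\!\left(\tfrac1a,\ldots,\tfrac1a,0,\ldots\right)=\frac{1}{a^n}\,F_{\Des(\pi)}(\underbrace{1,\ldots,1}_{a},0,\ldots)=\QSf_{\Psi_a}(\pi); \]
equivalently, one may invoke Theorem~\ref{QS-theorem} together with the fact that the $QS^*$-distribution is invariant under rescaling the evaluation point, since such a rescaling multiplies every $c_{\alpha,n}$ and $\eps^n$ by the same scalar.

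The one step that is not purely formal is the stars-and-bars identity $F_{\Des(\pi)}(\underbrace{1,\ldots,1}_{a},0,\ldots)=\binom{n+a-d(\pi)-1}{n}$, and even that is routine. Two small things to watch: that the $\star$-product on $\widehat{\Sol}$ appearing in the definition of $\Psi_a$ is the one dual to the coproduct of $\QSym$ (so the opening computation of $\pair{X^{\Psi_a}}{M_\alpha}$ is valid), and that $|S_{\Des(\pi)}|$ really is the number of descents of $\pi$ so that it lines up with $d(\pi)$ in \eqref{E:BD}.
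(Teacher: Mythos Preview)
Your proof is correct and follows essentially the same approach as the paper: compute $\pair{X^{\Psi_a}}{M_\alpha}=\binom{a}{\ell(\alpha)}$ via the iterated coproduct, extract $c_{\alpha,n}$, and match against the Bayer--Diaconis formula. The paper is terser---it cites Stanley for the second equality rather than deriving it from homogeneity as you do, and it states the binomial formula for $\Kbar(\beta,n)$ without spelling out the stars-and-bars argument---but the substance is the same.
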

\begin{proof}
The second equality was shown by Stanley \cite{QS}. Using \eqref{E:univ} it is easy to obtain a formula for $\Psi_a$ in the monomial basis (or see \cite[(4.5)]{ABS}), from which it follows that 
\[\Kbar^{\Psi_a}_n(\beta,n) = \binom{n + a - \ell(\beta)}{n}/ a^n.\]
This is equivalent to the formula of Bayer and Diaconis, as $\ell(\Des(\pi)) = d(\pi) + 1.$
\end{proof}

Proposition~\ref{P:ashuffle} can also be inferred from \cite[\S3.6]{NC6}.

The eigenvalues and diagonalizability of operators associated with $a$-shuffles are well known; see, for instance, \cite{BD, BHR, Brown}. Noting that  $\pair{X^{\Psi_a}}{M_n} = a$, the results of Section~\ref{recursive-section} imply the following:

\begin{proposition}\label{P:Psi-eigenval}
For all $n\ge 0$, $(\Psi_a)_n$ is diagonalizable. Its eigenvalues, with multiplicities, are $(a^{\ell(\alpha)})_{\alpha\in\Comp(n)}$.
\end{proposition}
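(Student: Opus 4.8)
The plan is to apply the spectral machinery of Section~\ref{recursive-section} directly to the endomorphism $\Psi_a$, after computing the relevant scalars $\eps_m = \pair{X^{\Psi_a}}{M_m}$. Since $X^{\Psi_a} = X \star \cdots \star X$ ($a$ terms) and $\pair{X}{M_m} = 1$ for every $m \ge 1$ while $\pair{X}{M_\beta} = 0$ whenever $\ell(\beta) \ge 2$, one uses Proposition~\ref{P:grouplike} (equivalently, the group-like property and the pairing between $\star$ on $\Sol$ and $\Delta_\calQ$ on $\QSym$) to pair $X^{\Psi_a}$ against $M_m$: expanding $\Delta_\calQ^{a}(M_m) = \sum M_{\gamma_1} \otimes \cdots \otimes M_{\gamma_a}$ over all ways of writing $m$ (as a one-part composition) as a concatenation of $a$ possibly empty compositions $\gamma_1 \cdots \gamma_a$, only the terms in which every $\gamma_i$ is either empty or a single part $(m_i)$ contribute a nonzero pairing, and these contribute $1$ each. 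Counting such terms gives $\eps_m = \pair{X^{\Psi_a}}{M_m} = a$ for all $m \ge 1$ (the number of ways to place the single block $m$ into one of $a$ slots, with the rest empty). Consequently $\eps_\alpha = a^{\ell(\alpha)}$ for every composition $\alpha$.

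Next I would verify the two hypotheses of Theorem~\ref{T:eigen}. For the eigenvalue-separation condition \eqref{E:eigen-cond}: since $\eps_m = a \ge 1 > 0$ for all $m$, we need $\eps_m = a \ne \eps_\beta = a^{\ell(\beta)}$ for every $\beta \in \Comp(m) \setminus \{(m)\}$, i.e.\ for every $\beta$ with $\ell(\beta) \ge 2$. This holds precisely because $a \ge 1$ forces $a^{\ell(\beta)} = a$ only when $\ell(\beta) = 1$ — here one must handle $a = 1$ separately and carefully, observing that for $a=1$ we have $\Psi_1 = \mathrm{id}$, all eigenvalues are $1$, and $(\Psi_1)_n$ is trivially diagonalizable (the identity), so the general argument below only needs $a \ge 2$, where $a^{\ell(\beta)} > a$ strictly for $\ell(\beta) \ge 2$. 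For the rank condition: since every $\eps_\alpha = a^{\ell(\alpha)} \ne 0$, we need $\rank((\Psi_a)_n) = \#\Comp(n) = 2^{n-1}$, i.e.\ that $(\Psi_a)_n$ is invertible. This follows from Remark~\ref{R:triangular}: $\Psi_a$ is lower triangular in the monomial basis with diagonal entries $\eps_\alpha = a^{\ell(\alpha)}$, all nonzero, so its determinant on $\QSym_n$ is nonzero. With both hypotheses checked, Theorem~\ref{T:eigen} yields that $(\Psi_a)_n$ is diagonalizable, and Proposition~\ref{P:eigenvalues} identifies its eigenvalues, with multiplicity, as $(\eps_\alpha)_{\alpha \in \Comp(n)} = (a^{\ell(\alpha)})_{\alpha \in \Comp(n)}$.

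The main obstacle I anticipate is the bookkeeping in the computation $\pair{X^{\Psi_a}}{M_m} = a$: one must be precise about iterated coproducts $\Delta_\calQ^{a}$, about which tensor factors the pairing $\pair{X}{\cdot}$ kills, and about the convention for empty compositions, to avoid an off-by-one error (e.g.\ confusing $a$ with $a+1$ or with $\binom{a}{1}$ in a misleading way). An alternative and perhaps cleaner route is to invoke the explicit monomial-basis formula for $\Psi_a$ from \cite[Example~4.7]{ABS} (already cited in the proof of Proposition~\ref{P:ashuffle}), read off the coefficient of $M_m$ in $\Psi_a(M_m)$ directly, and then quote Theorem~\ref{T:eigen} and Proposition~\ref{P:eigenvalues} as above; this sidesteps the coproduct manipulation entirely. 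Either way, once $\eps_m = a$ is established the remaining verifications are routine, and the degenerate case $a = 1$ should be dispatched with a one-line remark that $\Psi_1$ is the identity.
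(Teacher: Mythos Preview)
Your proposal is correct and follows essentially the same route as the paper: compute $\eps_m = \pair{X^{\Psi_a}}{M_m} = a$, then invoke Proposition~\ref{P:eigenvalues} and Theorem~\ref{T:eigen}. You are in fact more thorough than the paper's one-line proof, since you explicitly verify the hypotheses of Theorem~\ref{T:eigen} and correctly observe that the case $a=1$ (where $\Psi_1$ is the identity and condition~\eqref{E:eigen-cond} fails) must be handled separately.
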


It is also possible to determine an explicit formula for the eigenvectors:

\begin{proposition}\label{P:Psi-eigenvec}
For every $n\ge 0$, the unique eigenvector $Z_n$ such that $X^{\Psi_a} \cdot Z_n = a Z_n$ and $\pair{Z_n}{M_n} = 1$ is given by 
\begin{equation} \label{E:shuffle-Z}
Z_{n} = \sum_{\beta\in\Comp(n)} \frac{(-1)^{\ell(\beta) -1}}{\ell(\beta)} X_{\beta}.
\end{equation}
\end{proposition}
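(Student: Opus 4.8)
The plan is to verify directly that the element $Z_n$ defined by \eqref{E:shuffle-Z} satisfies $X^{\Psi_a}\cdot Z_n = a\,Z_n$ and that $\pair{Z_n}{M_n}=1$, and then invoke the uniqueness statement (the proposition immediately preceding Example~4.10 on uniqueness of eigenvectors, i.e.\ the result that says any $Z'_m$ with $X^\Qmap\cdot Z'_m = \eps_m Z'_m$ and $\pair{Z'_m}{M_m}\neq 0$ is a scalar multiple of $Z_m$). Since $\eps_m = \pair{X^{\Psi_a}}{M_m} = a \neq 0$ for all $m$, and the eigenvalue condition \eqref{E:eigen-cond} holds for $\Psi_a$ (because $\eps_\beta = a^{\ell(\beta)} \neq a$ for $\beta$ of length $\geq 2$), the recursively defined $Z_n$ exists and is unique up to normalization; so it suffices to exhibit \emph{one} eigenvector with the right normalization.

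The normalization $\pair{Z_n}{M_n}=1$ is immediate from \eqref{E:shuffle-Z}, since the only composition $\beta$ with $\pair{X_\beta}{M_n}\neq 0$ is $\beta=(n)$, which contributes $\frac{(-1)^0}{1}=1$. For the eigenvalue equation, I would compute $X^{\Psi_a}\cdot X_\beta$. Using Proposition~\ref{P:leftmult}, $X^{\Psi_a}\cdot X_\beta = (X_\beta)^{\Psi_a}$, so $\pair{X^{\Psi_a}\cdot X_\beta}{M_\gamma} = \pair{X_\beta}{\Psi_a(M_\gamma)}$, which by \eqref{E:univ} (applied with the character $X^{\Psi_a}$) equals the coefficient of $M_\beta$ in $\Psi_a(M_\gamma)$; concretely, $\pair{X^{\Psi_a}}{M_{\gamma_1}}\cdots\pair{X^{\Psi_a}}{M_{\gamma_j}}$ summed over factorizations $\gamma=\gamma_1\cdots\gamma_j$ with $(|\gamma_1|,\dots,|\gamma_j|)=\beta$. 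Since $X^{\Psi_a} = X\star\cdots\star X$ ($a$ factors), one has $\pair{X^{\Psi_a}}{M_m} = \binom{a}{1}\cdot[\text{something}]$... more precisely $\pair{X^{\Psi_a}}{M_\delta} = \binom{a}{\ell(\delta)}$ (the number of ways to distribute the parts of $\delta$ into the $a$ slots in order, each slot being a single part or empty — since $X$ only pairs nontrivially with $M_{()}$ and $M_{(k)}$). Substituting $X_\beta = \sum_{\gamma\le\beta}\pair{Z_n}{M_\beta}X_\beta$-type expansions then reduces the claim to a binomial identity on compositions.

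Concretely, writing $z_\beta = \frac{(-1)^{\ell(\beta)-1}}{\ell(\beta)}$, the equation $X^{\Psi_a}\cdot Z_n = aZ_n$ amounts, after pairing against each $M_\gamma$, to
\[
\sum_{\beta\,\le\,\gamma} z_\beta \binom{a}{\ell(\gamma)\ \text{viewed via the refinement }\gamma\ \text{of}\ \beta} \ =\ a\, z_\gamma ,
\]
i.e.\ an identity of the shape $\sum_{\beta\le\gamma} z_\beta\,(\text{product of }\binom{a}{\cdot}\text{ over blocks}) = a z_\gamma$ that depends only on the integer $\ell(\gamma)$. I expect the cleanest route is generating functions: encode $Z_n$ via the ordered-generating-function identity $\sum_{n\ge1} Z_n t^n \leftrightarrow -\log\bigl(\sum_{n\ge0}(-1)^n X_n t^n\bigr)^{-1}$-style relation in $\dhat$ under $\star$ (the coefficients $\frac{(-1)^{\ell-1}}{\ell}$ are exactly the expansion of a logarithm), and observe that $X^{\Psi_a}$ acts on the $\star$-algebra as the $a$-th $\star$-power of $X$; the logarithm of an $a$-th power is $a$ times the logarithm, which is precisely the eigenvalue relation. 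This is the step I expect to be the main obstacle: making the ``$Z_n$ is the logarithm of $X$'' statement precise enough that ``$\log(X^{\star a}) = a\log X$'' is a legitimate computation in the completed algebra $\dhat$, rather than just a formal analogy. Once that is set up, both the normalization and the eigenvalue equation fall out, and uniqueness finishes the proof.
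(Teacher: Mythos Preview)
Your proposal is correct and essentially matches the paper's proof. The paper carries out exactly your first route: it uses $\pair{X^{\Psi_a}}{M_\delta}=\binom{a}{\ell(\delta)}$ to expand $X^{\Psi_a}\cdot X_\beta$ in the $X$-basis, observes that the resulting eigenvalue equation depends only on $\ell(\gamma)$ so one may take $\gamma=1^n$, and reduces to the identity
\[
\sum_{(c_1,\ldots,c_h)\comp n}\frac{(-1)^{h-1}}{h}\binom{a}{c_1}\cdots\binom{a}{c_h}=a\,\frac{(-1)^{n-1}}{n},
\]
which it proves by comparing coefficients of $x^n$ in $\ln\bigl(1+((1+x)^a-1)\bigr)=a\ln(1+x)$. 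That logarithm identity is exactly the commutative shadow of your preferred ``$\log(X^{\star a})=a\log X$'' argument, so the two routes are really the same computation at different levels.

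One small correction to your second route, which also resolves what you call the main obstacle: the operator $W\mapsto X^{\Psi_a}\cdot W$ is \emph{not} ``raise to the $a$-th $\star$-power''; it is the $\star$-algebra endomorphism of $\dhat$ (being dual to the Hopf endomorphism $\Psi_a$) that sends the grouplike series $X=\sum_n X_n$ to $X^{\star a}$. Since $\sum_{n\ge1} Z_n=\log_\star(X)$ by the very form of the coefficients $\frac{(-1)^{\ell(\beta)-1}}{\ell(\beta)}$, applying this endomorphism gives $\log_\star(X^{\star a})=a\log_\star(X)$, and taking degree-$n$ parts yields $X^{\Psi_a}\cdot Z_n=aZ_n$. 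With that phrasing the argument is rigorous and no further work is needed.
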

\begin{proof}
For compositions $\alpha\le \beta$ as above, let
\[g(\beta,\alpha) = \binom{a}{\ell(\beta_1)} \cdots \binom{a}{\ell(\beta_k)}\]
where $\beta = \beta_1 \cdots \beta_k$ and $\alpha = (|\beta_1|, \ldots, |\beta_k|)$. It follows from \eqref{E:univ} that $\pair{X^{\Psi_a} \cdot X_\beta}{M_\alpha} = g(\beta,\alpha)$. If \eqref{E:shuffle-Z} holds then
\[
X^{\Psi_a}\cdot Z_n = \sum_{\beta \in\Comp(n)} \frac{(-1)^{\ell(\beta)-1}}{\ell(\beta)} 
\sum_{\gamma\ge\beta} g(\gamma,\beta) X_\gamma 
\nonumber
= \sum_{\gamma \in\Comp(n)} X_\gamma \left( \sum_{\beta\le\gamma} 
 \frac{(-1)^{\ell(\gamma)-1}}{\ell(\gamma)} g(\gamma,\beta)\right).
\]
So, to conclude that $X^{\Psi_a}\cdot Z_n = a Z_n$ it suffices to prove
\begin{equation}\label{E:Psidual3}
 \sum_{\beta\le\gamma} 
 \frac{(-1)^{\ell(\gamma)-1}}{\ell(\gamma)} g(\gamma,\beta) =  a \, \frac{(-1)^{\ell(\gamma)-1}}{\ell(\gamma)}.
\end{equation}
We may assume without loss of generality that  $\gamma = 1^n.$ In this case \eqref{E:Psidual3} becomes
\[\sum_{(c_1,\ldots, c_h) \comp n} \frac{(-1)^{h-1}}{h} \binom{a}{c_1} \cdots \binom{a}{c_h} =  a \, \frac{(-1)^{n-1}}{n},\]
which is verified by equating the coefficients of $x^n$ in
\[\ln(1+((1+x)^a -1)) = a \ln(1+x).\]
\end{proof}

\begin{remark} Under the isomorphism between $(\Sol, \star, \Delta_\Sol)$ and the Hopf algebra of noncommutative symmetric functions, $n\cdot Z_n$ goes to the noncommutative power sum symmetric function of the second kind indexed by $n$ (see \cite{NC1} for definitions).
\end{remark}

Let $\{P_\alpha\}\subseteq \QSym_n$ be the dual basis of $\{Z_\alpha\}$. Thus, $\pair{Z_\alpha}{P_\beta} = \delta_{\alpha,\beta}$ and
\[\Psi_a(P_\beta) = a^{\ell(\beta)} P_\beta.\] This basis was introduced in \cite{MR}. An explicit formula for $P_\alpha$ is obtained as follows.
 For compositions $\alpha = (a_1, \ldots, a_k)$ and $\beta = (b_1,\ldots, b_\ell)$ such that $\alpha\le\beta$, let $\beta_1, \beta_2, \ldots, \beta_k$ be the sequence of compositions such that 
$\beta = \beta_1\cdots \beta_k$ and $|\beta_i| = a_i$ for all $i$. Let $f(\beta,\alpha) =  \ell(\beta_1)! \cdots \ell(\beta_k)!.$
Then we have
\begin{equation}\label{E:PM}
P_\beta = \sum_{\alpha\le \beta} \frac{1}{f(\beta,\alpha)} M_\alpha.
\end{equation}
This is \cite[Formula~(2.12)]{MR}. 

\subsection{Tchebyshev operator of the second kind}

The Tchebyshev operator (of the second kind) was introduced by Hetyei \cite{Het} as an operator on $ab$-words that encodes how the flag $f$-vector of a  graded poset changes when the poset undergoes a certain combinatorial transformation. The Tchebyshev polynomials of the second kind can be obtained by a suitable specialization of the operator. Ehrenborg and Readdy \cite{ER} introduced an equivalent operator $U$ on $\QSym$ and showed that it is a Hopf endomorphism. They showed that $U$ is diagonalizable and determined the eigenvalues and eigenvectors. Here we explain how the spectra can be deduced from our results and how $U$ encodes riffle shuffles.

Given a graded poset $P$ of rank $n\ge 0$ with unique minimal and maximal elements $\hat{0}$ and $\hat{1}$, respectively, define $F(P) \in \QSym_n$ by $F(P) = 1$ if $n=0$ and
\[ F(P) = \sum_{\hat{0} = t_0 < t_1 < \cdots < t_k = \hat{1}} M_{(\rho(t_0,t_1), \rho(t_1,t_2),\ldots,\rho(t_{k-1},t_k))}\]
if $n\ge 1$, where the sum is over all chains in $P$ from $\hat{0}$ to $\hat{1}$. The definition of $F(P)$ is due to Ehrenborg \cite{E96}. See \cite{EC1} for background on posets. The {\it Tchebyshev operator of the second kind} \cite{ER} is the linear operator $U$ on $\QSym$ satisfying
\[
 U(F(P)) = \sum_{\hat{0} = t_0 < t_1 < \cdots < t_k = \hat{1}} \pair{G}{F({[t_0,t_1]})} \cdots \pair{G}{F({[t_{k-1},t_k]})} \cdot M_{(\rho(t_0, t_1), \ldots, \rho(t_{k-1}, t_k))}
\]
for every graded poset $P$, where $G\in\dhat$ is the character given by $\pair{G}{F(P)} = \# P$. 
\begin{proposition}
We have
\begin{equation}
 U = \Psi_2.
 \end{equation}
\end{proposition}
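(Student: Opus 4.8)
The plan is to show that the two endomorphisms $U$ and $\Psi_2$ of $\QSym$ coincide by checking that they induce the same character of $\QSym$, and then invoking the universality of the map $\Qmap\mapsto X^\Qmap$ from Proposition~\ref{P:universal}. Concretely, by Proposition~\ref{P:universal} it suffices to prove $X^U = X^{\Psi_2}$, i.e.\ that $\pair{X^U}{F} = \pair{X^{\Psi_2}}{F}$ for all $F\in\QSym$, and since both sides are algebra maps $\QSym\to\field$ and $\QSym$ is generated by the $M_\alpha$ (indeed by the $M_\alpha$ with $\alpha$ Lyndon, by Theorem~\ref{T:matrix}), it is in fact enough to check equality on the homogeneous pieces, or even just on the elements $M_n$.

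First I would compute $\pair{X^{\Psi_2}}{M_n}$. By definition $X^{\Psi_2} = X\star X$, and applying $\pair{\cdot}{M_n}$ together with the duality between $\star$ and $\Delta_\calQ$ gives $\pair{X\star X}{M_n} = \pair{X\otimes X}{\Delta_\calQ(M_n)} = \sum_{i+j=n}\pair{X}{M_{(i)}}\pair{X}{M_{(j)}}$ (with the empty composition contributing $\pair{X}{1}=1$), and since $\pair{X}{M_\alpha}$ is $1$ when $\alpha$ is empty or a single part and $0$ otherwise, this sum equals $2$ for every $n\ge 1$ (the terms $(n,0)$ and $(0,n)$). So $\pair{X^{\Psi_2}}{M_n}=2$ for all $n\ge 1$ and $\pair{X^{\Psi_2}}{1}=1$.

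Next I would compute $\pair{X^U}{M_n}$, which by definition of $X^U$ is $\pair{X}{U(M_n)}$; recall $\pair{X}{\cdot}$ is evaluation at $(1,0,0,\dots)$, so this picks out the coefficient of $M_{(1)}$-type data, or more usefully one can apply $U$ to $F(P)$ for a rank-$n$ poset realizing $M_n$. Take $P$ to be the chain (totally ordered poset) of rank $n$: then $F(P) = M_{(n)}$ if we sum only over the trivial chain $\hat 0<\hat 1$, but in general $F(\text{chain of rank }n) = \sum_{\alpha\comp n} M_\alpha = F_{(n)}$ is too coarse, so instead I would take $P$ to be a rank-$n$ poset with $F(P) = M_{(n)}$, e.g.\ one with no strictly-between elements when restricted appropriately; actually the cleanest route is to observe that $U$ is determined on all of $\QSym$ by the formula, and that $U(M_{(n)})$ can be read off from the $k=1$ chain contribution against the expansion, giving $\pair{X^U}{M_n} = \#(\text{rank-}n\text{ interval})$ summed against the single-step term, which for the $2$-shuffle character $G$ with $\pair{G}{F(P)} = \#P$ evaluates on a rank-$n$ interval (two points of a rank-$n$ poset with nothing between, i.e.\ $\#P = 2$) to $2$. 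Thus $\pair{X^U}{M_n} = 2 = \pair{X^{\Psi_2}}{M_n}$ for all $n\ge 1$, and trivially $\pair{X^U}{1}=1$.

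Finally, since both $X^U$ and $X^{\Psi_2}$ are characters of $\QSym$ (Proposition~\ref{P:grouplike} applies, or directly: $\Psi_2\in\End(\QSym)$ by construction, and $U\in\End(\QSym)$ by Ehrenborg--Readdy) that agree on the algebra generators $M_n$ hence on all of $\QSym$, Proposition~\ref{P:universal} forces $U = \Psi_2$. The main obstacle I anticipate is pinning down precisely which graded poset (or which $F(P)$) to feed into the defining formula for $U$ in order to isolate $\pair{X^U}{M_n}$ cleanly; once the bookkeeping for the flag $f$-vector formula $F(P)$ on a rank-$n$ interval is set up correctly, the computation that the relevant character value is $2$ is immediate, matching the $a=2$ specialization of the convolution character $X^{\Psi_a}$ whose degree-$n$ value is $\pair{X^{\Psi_a}}{M_n} = a$ (established in the discussion preceding Proposition~\ref{P:Psi-eigenval}).
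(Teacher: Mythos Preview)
Your high-level strategy---show that the characters $X^U$ and $X^{\Psi_2}$ coincide and then invoke the bijection of Proposition~\ref{P:universal}---is exactly the paper's approach. However, there is a genuine gap in the execution. You assert that it suffices to check $\pair{X^U}{M_n} = \pair{X^{\Psi_2}}{M_n}$ for all $n$ because characters are algebra maps and $\QSym$ is generated by the $M_\alpha$. But the $M_n$ alone do \emph{not} generate $\QSym$: the free polynomial generators are the $M_\alpha$ with $\alpha$ Lyndon, and these include $M_{12}, M_{13}, M_{112},\ldots$, not just the single-part $M_n$. The subalgebra generated by $\{M_n : n\ge 1\}$ is only $\Sym\subsetneq\QSym$, and two distinct characters of $\QSym$ can certainly agree on $\Sym$ while differing on, say, $M_{12}$. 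So agreement on the $M_n$ is not enough to conclude $X^U = X^{\Psi_2}$.

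The fix is to verify equality of characters on all of $\QSym$, and the natural way---which is what the paper does---is to evaluate on $F(P)$ for an \emph{arbitrary} graded poset $P$, since these span $\QSym$. From the $k=1$ term in the defining formula for $U$ one reads off $\pair{X^U}{F(P)} = \pair{G}{F(P)} = \#P$, so $X^U = G$. On the other side, the key ingredient (which your proposal does not use) is that $P\mapsto F(P)$ is a Hopf algebra homomorphism, so $\Delta_\calQ F(P) = \sum_{t\in P} F([\hat 0,t])\otimes F([t,\hat 1])$; together with $\pair{X}{F(Q)} = 1$ for every graded poset $Q$, this gives $\pair{X\star X}{F(P)} = \sum_{t\in P} 1 = \#P$. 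Hence $G = X\star X = X^{\Psi_2}$ on all of $\QSym$. Your two-element rank-$n$ poset correctly handles the special case $F(P)=M_n$, but by itself that case does not pin down the character.
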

\begin{proof}
It is straightforward to check that  $G = X \star X$, using the fact that $P\mapsto F(P)$ defines a Hopf algebra homomorphism from the Hopf algebra of graded posets to $\QSym$ \cite{E96}  and that $\QSym$ is spanned by the $F(P)$ as $P$ ranges over all graded posets \cite{BL}. Comparing \eqref{E:univ} with the definition of $U$, we get $U\circ F = {\Psi_2} \circ F$, which implies $U = \Psi_2$ since the $F(P)$ span $\QSym$.
\end{proof}

Formulas for the eigenvalues and eigenvectors are given in Proposition~\ref{P:Psi-eigenval} and \eqref{E:PM} (cf.\ \cite[Theorem~10.10]{ER}). 
Also, Proposition~\ref{P:ashuffle} yields the following probabilistic interpretation:
\begin{corollary}
The coefficient of $F_\alpha$ in the fundamental-basis expansion of $\frac{1}{2^{nk}} U^k(F_n)$ is the probability of ending up with some permutation with descent composition $\alpha\in\Comp(n)$ after performing $k$ riffle shuffles.
\end{corollary}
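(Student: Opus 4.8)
The plan is to combine the identification $U=\Psi_2$ just obtained with the Markov-chain machinery of Section~\ref{S:random-walks} and the shuffling interpretation of $\Psi_a$ from Proposition~\ref{P:ashuffle}. Since $U=\Psi_2$ and $\pair{X^{\Psi_2}}{M_1}=2>0$, the endomorphism $\Psi_2$ satisfies the hypotheses of Theorem~\ref{T:end-markov} with $\eps=2$; directly from the definitions $c_{\alpha,\beta}=\pair{Y_\beta}{\Qmap(F_\alpha)}$ and $\Kbar(\alpha,\beta)=\eps^{-n}c_{\alpha,\beta}$, the stochastic matrix $\Kbar^{\Psi_2}_n$ of that theorem satisfies $\frac{1}{2^n}\Psi_2(F_\gamma)=\sum_{\delta\in\Comp(n)}\Kbar^{\Psi_2}_n(\gamma,\delta)\,F_\delta$ for all $\gamma$. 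Iterating this identity $k$ times (and using $F_{(n)}=M_{(n)}$) gives
\[
\frac{1}{2^{nk}}\,U^k(F_n)=\sum_{\alpha\in\Comp(n)}(\Kbar^{\Psi_2}_n)^{k}((n),\alpha)\,F_\alpha ,
\]
so the number to be computed is the $k$-step transition probability $(\Kbar^{\Psi_2}_n)^{k}((n),\alpha)$ of the lumped chain, started from the state $(n)$.

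The second step is to recognize this $k$-step probability as the riffle-shuffle quantity in the statement. By Theorem~\ref{T:end-markov}, $\Kbar^{\Psi_2}_n$ lumps $K^{\Psi_2}_n$ by descent sets, and lumping is preserved under taking powers (a one-line induction on \eqref{E:lump-def}; see also Kemeny--Snell~\cite{KS}), so $(\Kbar^{\Psi_2}_n)^{k}$ lumps $(K^{\Psi_2}_n)^{k}$. As the identity permutation $e$ is the unique element of $\perm_n$ with descent composition $(n)$, this yields
\[
(\Kbar^{\Psi_2}_n)^{k}((n),\alpha)=\sum_{\sigma\in\perm_n:\ \Des(\sigma)=\alpha}(K^{\Psi_2}_n)^{k}(e,\sigma).
\]
Now $K^{\Psi_2}_n(\pi,\sigma\pi)=\prob_{\Psi_2}(\sigma^{-1})$, and by Proposition~\ref{P:ashuffle} $\prob_{\Psi_2}(\sigma^{-1})=\prob_2(\sigma)$, the GSR riffle-shuffle probability of \eqref{E:BD}; equivalently, $W:=\sum_{\sigma}\prob_{\Psi_2}(\sigma)\,\sigma$ is the image of the riffle-shuffle element $\sum_\sigma\prob_2(\sigma)\,\sigma$ under the algebra anti-automorphism $\sigma\mapsto\sigma^{-1}$ of $\field[\perm_n]$, whence $W^{*k}(\mu)=\prob_2^{*k}(\mu^{-1})$ for all $\mu$. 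Tracing the left random walk started at $e$ (exactly as in the derivation of \eqref{E:Kbar-convolve}) gives $(K^{\Psi_2}_n)^{k}(e,\sigma)=W^{*k}(\sigma^{-1})=\prob_2^{*k}(\sigma)$, the probability that the deck is in arrangement $\sigma$ after $k$ independent riffle shuffles. Summing over $\sigma$ with $\Des(\sigma)=\alpha$ produces precisely the probability of ending with descent composition $\alpha$ after $k$ riffle shuffles, completing the proof.

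Alternatively one can avoid iterating $\Kbar$ by first proving $\Psi_2^{\,k}=\Psi_{2^k}$: by Proposition~\ref{P:transpose} (applied with $W=X^{\Psi_2}$) one has $X^{\Psi_2\circ\Psi_2}=X^{\Psi_2}\cdot X^{\Psi_2}=(X\star X)\cdot(X\star X)$, and expanding $\Delta_\Sol$ via Proposition~\ref{P:compat} together with the group-likeness of $X$ and of $X^{\Psi_a}=X^{\star a}$ (Proposition~\ref{P:grouplike})—using $G_{(i)}\cdot X=G_{(i)}$ for homogeneous $G_{(i)}$—shows $(X^{\star a})\cdot(X^{\star b})=X^{\star ab}$, hence $\Psi_a\circ\Psi_b=\Psi_{ab}$. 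Then $\frac{1}{2^{nk}}U^k(F_n)=\frac{1}{(2^k)^n}\Psi_{2^k}(F_n)$, whose $F_\alpha$-coefficient is $\Kbar^{\Psi_{2^k}}_n((n),\alpha)=\sum_{\Des(\sigma)=\alpha}\prob_{2^k}(\sigma)$ by the $k=1$ case of the computation above (with $a=2^k$, using $\prob_{\Psi_{2^k}}(\sigma^{-1})=\prob_{2^k}(\sigma)$ from Proposition~\ref{P:ashuffle}), and one invokes the classical fact that a single $2^k$-shuffle has the same distribution as $k$ successive riffle shuffles~\cite{BD}. Either way, the only genuine obstacle is bookkeeping: one must keep the left/right-action and inverse conventions straight so that $k$ steps of the \emph{left} random walk $K^{\Psi_2}_n$ from the identity honestly realize $k$ consecutive riffle shuffles; once this is pinned down, every remaining step is a direct unwinding of definitions and of results already proved.
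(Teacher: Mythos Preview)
Your proof is correct and follows the route the paper intends: the paper merely writes ``Proposition~\ref{P:ashuffle} yields the following probabilistic interpretation'' and states the corollary with no further argument, so your write-up is in fact a careful unpacking of exactly that one-line justification (combine $U=\Psi_2$ with Theorem~\ref{T:end-markov} and Proposition~\ref{P:ashuffle}, keeping track of the inverse). Your alternative route via $\Psi_2^{\,k}=\Psi_{2^k}$ and the Bayer--Diaconis $a^k$-shuffle identity is also valid and is arguably the cleaner way to avoid the left/right bookkeeping you flag at the end.
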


A natural question is whether there is a transformation of graded posets for which the operator $\Psi_m$, $m> 2$, plays the role analogous to the Tchebyshev operator $U$.

\section{A random walk on $ab$-words}\label{S:ab}

The $cd$-index is a remarkably convenient encoding for the flag $f$-vector of any Eulerian poset (e.g. Bruhat order), namely the vector counting chains through various rank sets (e.g. flags of faces of  specified dimensions in a regular cell decomposition of a sphere).   The $ab$-index is a vector which makes  sense for all graded posets, and has as its basis all words in the two noncommuting variables $a$ and $b$.  The $cd$-index is derived from the $ab$-index by the substitutions $c=a+b$ and $d=ab+ba$.  There is an extensive literature using $cd$-index to study which  vectors may arise as flag $f$-vectors.  See e.g.\ \cite{BER} for further background.

In \cite{E04}, Ehrenborg introduced a family of linear operators $\omega_r$ which act on $ab$-words as follows.  Given an $ab$-word, replace each appearance
of $ab$ by $r(ab - (r-1)ba)$.  Now send each remaining $a$ from the
original word to $a + (r-1)b$ and send each remaining $b$ from
the original word to $b + (r-1)a$.  This operator, which generalizes the $\omega$ map of \cite{BER}, was designed to compute flag vectors of the $r$-Birkhoff transform, perhaps most notably including a family of complex regular  polytopes known as Shephard's generalized orthotopes. Ehrenborg showed that the operator $r \omega_r$ is isomorphic to a Hopf algebra endomorphism $\vartheta_r \in \End(\QSym)$ and he made some conjectures about the spectrum. This section proves Ehrenborg's conjectures. We will then show that $\vartheta_r$ is dual to the $q$-bracketing operators studied in \cite{NC2}. Finally, we give a probabilistic interpretation of $\vartheta_r$ that generalizes a theorem of Stembridge \cite{Ste}.

First let us define $\vartheta_r$. Given an $ab$-word $u=u_1 u_2 \cdots u_n$, where each $u_i$ is $a$ or $b$, we define $S_u \subseteq [n]$ by putting $i\in S_u$ if and only if $u_i=b$. For instance $S_{abaab} = \{2,5\}$. The correspondence $u\leftrightarrow S_u$ is a bijection between $ab$-words of length $n$ and subsets of $[n]$. This bijection induces an isomorphism, denoted $\gamma$, from the vector space of $ab$-words onto $\bigoplus_{i=1}^\infty \QSym_i$, given by $\gamma(u)= F_{\co(S_u)}$. Define  $\vartheta_r$ to be the linear operator on $\QSym$ given by $\vartheta_r(F) = \gamma ( r \omega_r ( \gamma^{-1} (F) ))$ if $F \in \bigoplus_{i=1}^\infty \QSym_i$ and $\vartheta_r(1) = 1$. Ehrenborg \cite{E04} proved that $\vartheta_r$ is a Hopf algebra endomorphism of $\QSym$ and that $\vartheta_2$ is  Stembridge's map $\Theta$, and he made the follow conjecture on the spectrum: 

\begin{proposition} \label{Ehrenborg-conjectures}
For all $n\ge 0$ and $r\in\R$, $(\vartheta_r)_n$ is diagonalizable and its eigenvalues, counting multiplicities, are
\[ \eps_\alpha = (1-(1-r)^{a_1})(1-(1-r)^{a_2})\cdots (1-(1-r)^{a_k})\]
for $\alpha = (a_1, a_2, \ldots, a_k)\in\Comp(n)$.
\end{proposition}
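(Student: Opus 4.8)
The plan is to use the spectral apparatus of Section~\ref{recursive-section}. Put $q=1-r$. By Proposition~\ref{P:eigenvalues} the eigenvalues of $(\vartheta_r)_n$, with multiplicity, are the products $\eps_\alpha=\eps_{a_1}\cdots\eps_{a_k}$ over $\alpha=(a_1,\dots,a_k)\in\Comp(n)$, where $\eps_m=\pair{X^{\vartheta_r}}{M_m}$; so the eigenvalue claim (and its multiplicity count, which is then automatically the number of compositions of $n$) reduces to the single identity $\eps_m=1-q^m$.

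To obtain that identity I would compute directly with $\omega_r$. By \eqref{E:univ}, $\eps_m$ is the coefficient of $M_{(m)}$ in $\vartheta_r(M_{(m)})$; and for any $G\in\QSym_m$ the coefficient of $M_{(m)}$ in its monomial expansion equals the coefficient of $F_{(m)}$ in its fundamental expansion, since $(m)$ is the minimum of $\Comp(m)$ in the refinement order. Transporting through the isomorphism $\gamma$, and using $M_{(m)}=\sum_{\beta\comp m}(-1)^{\ell(\beta)-1}F_\beta$ together with the fact that $\gamma^{-1}(F_\beta)$ is the length‑$(m-1)$ $ab$-word carrying a $b$ exactly at the positions in $S_\beta$, one finds $\gamma^{-1}(M_{(m)})=(a-b)^{m-1}$ in the free algebra on $\{a,b\}$ and $\gamma^{-1}(F_{(m)})=a^{m-1}$. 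Hence $\eps_m$ is the coefficient of the word $a^{m-1}$ in $r\,\omega_r((a-b)^{m-1})$. A length‑$(m-1)$ word can contribute a term $a^{m-1}$ under $\omega_r$ only if it contains no factor $ab$ (an $ab$-factor is replaced by a block $r(ab-(r-1)ba)$, and both $ab$ and $ba$ contain a $b$); these are exactly the words $b^i a^{m-1-i}$, and for them $\omega_r(b^i a^{m-1-i})=(b+(r-1)a)^i(a+(r-1)b)^{m-1-i}$ has $a^{m-1}$-coefficient $(r-1)^i$, while its sign in $(a-b)^{m-1}$ is $(-1)^i$. Summing,
\[
\eps_m=r\sum_{i=0}^{m-1}(-1)^i(r-1)^i=r\sum_{i=0}^{m-1}q^i=1-q^m,
\]
so $\eps_\alpha=\prod_i(1-q^{a_i})$, as stated.

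For diagonalizability I would appeal to Theorem~\ref{T:eigen}. Its rank hypothesis, $\rank((\vartheta_r)_n)=\#\{\alpha\in\Comp(n):\eps_\alpha\ne0\}$, holds for every $r$: by Remark~\ref{R:triangular} the matrix of $(\vartheta_r)_n$ in the monomial basis is triangular with diagonal $(\eps_\alpha)$, so for $r\notin\{0,2\}$ (where every $\eps_\alpha\ne0$) it is invertible and both sides equal $2^{n-1}$; for $r=0$ one has $\vartheta_0\equiv0$ on positive degrees and both sides vanish; and for $r=2$, $\vartheta_2=\Theta$ and $\rank(\Theta_n)=f_n=\#\{\alpha\comp n:\text{all parts odd}\}=\#\{\alpha:\eps_\alpha\ne0\}$ by \cite{Ste}. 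The separation hypothesis of Theorem~\ref{T:eigen} asks that $\eps_m\ne\eps_\beta$ whenever $m\le n$, $\eps_m\ne0$, and $\beta\in\Comp(m)\setminus\{(m)\}$; each such equation $1-q^m=\prod_i(1-q^{b_i})$ is a nontrivial polynomial identity in $q$ (the two sides already disagree in the coefficient of $q^{\min_i b_i}$), so it can hold only for $r$ in a finite set $E_n$. For $r\notin E_n$, Theorem~\ref{T:eigen} gives the diagonalizability of $(\vartheta_r)_n$, with the recursively built elements $Z_\alpha$ of Section~\ref{recursive-section} as an explicit eigenbasis.

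The main obstacle is the exceptional set $E_n$: for instance at $r=\tfrac32$ (so $q=-\tfrac12$) one has $\eps_3=\tfrac98=\eps_{(1,2)}\ne0$, and the recursion \eqref{E:eigen-recursion} defining $Z_3$ divides by $\eps_3-\eps_{(1,2)}=0$. For $r\in E_n$ I would instead invoke the identification of $\vartheta_r$ with the $A\mapsto(1-q)A$ transformation of noncommutative symmetric functions (the $q$-bracketing operator of \cite{NC2}, $q=1-r$): that transformation multiplies the degree‑$m$ noncommutative power sum of the second kind $\Phi_m$ by $1-q^m$, hence scales each $\Phi_\alpha$ by $\prod_i(1-q^{a_i})$, and since $\{\Phi_\alpha\}_{\alpha\comp n}$ is a basis of $\Sol_n$ the transformation — and, dually, $(\vartheta_r)_n$ — is diagonalizable. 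This last argument also reproves the eigenvalue statement, so one could bypass Theorem~\ref{T:eigen} entirely; I keep the recursive route because it is the tool developed in Section~\ref{recursive-section} and because it produces eigenvectors explicitly.
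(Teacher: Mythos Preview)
Your eigenvalue computation is essentially the paper's: both compute the coefficient of $a^{m-1}$ in $r\,\omega_r(u)$, observe that only words $u=b^ka^{m-1-k}$ contribute (with contribution $r(r-1)^k$), and sum with the signs coming from $M_{(m)}=\sum_\beta(-1)^{\ell(\beta)-1}F_\beta$ to obtain $\eps_m=1-(1-r)^m$; Proposition~\ref{P:eigenvalues} then gives the multiset of eigenvalues. Your translation $\gamma^{-1}(M_{(m)})=(a-b)^{m-1}$ is a nice repackaging of this same calculation.

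Where you diverge from the paper is in the diagonalizability argument, and here you are \emph{more} careful than the paper itself. The paper simply asserts that diagonalizability follows from Theorem~\ref{T:eigen}; you correctly note that the separation hypothesis \eqref{E:eigen-cond} can fail --- your example $r=\tfrac32$, giving $\eps_3=\eps_{(1,2)}=\tfrac98$, is genuine --- so Theorem~\ref{T:eigen} as stated does not cover all real $r$. Your remedy, invoking the identification of $(X^{\vartheta_r})_n$ with the $q$-bracketing operator $\eta_{1-r}$ and the fact from \cite{NC2} that this operator is diagonal in the noncommutative power-sum basis $\{\Phi_\alpha\}$, is valid and non-circular: the identification (Proposition~7.2 in the paper) uses only the formula \eqref{E:Kthetar} for $\Kbar(\alpha,n)$, not the diagonalizability. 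As you yourself observe, this route via \cite{NC2} already handles all $r$ uniformly and renders the appeal to Theorem~\ref{T:eigen} superfluous; the paper's recursive eigenbasis construction buys explicit eigenvectors when the separation hypothesis holds, but does not by itself establish the proposition for every $r\in\R$.
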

\begin{proof}
For any $ab$-word $u$, $r^n \Kbar(\co(S_u),n)$ is the coefficient of $a^{n-1}$ in $r \cdot \omega_r(u)$ (here $\Kbar = \Kbar^{\vartheta_r}_n$). This coefficient is straightforward to compute using the definition of $\omega_r$: it  is $r (r-1)^k$ if $u$ has the form $b^k a^{n-1-k}$ and $0$ otherwise. It follows that
\begin{equation}\label{E:Kthetar}
 \Kbar(\alpha,n) = \begin{cases} 
\frac{\cdot (r-1)^k}{r^{n-1}} & \text{if $\alpha = (1^k, n-k)$ for $0\le k \le n-1$,} \\
0 & \text{otherwise.}
\end{cases}
\end{equation}
Note that
\[
\vartheta_r(M_n) = \vartheta_r\left(\sum_{\alpha\in\Comp(n)} (-1)^{\ell(\alpha)-1} F_\alpha\right) 
= \sum_{\gamma \in\Comp(n)} M_\gamma \sum_{\alpha\in\Comp(n) \atop \beta \le \gamma} (-1)^{\ell(\alpha)-1} r^n \Kbar(\alpha,\beta).
\]
Since $\eps_n$ is the coefficient of $M_n$ in the 
monomial expansion of $\vartheta_r(M_n)$, we have
\[ \eps_n = \sum_{\alpha\in\Comp(n)} (-1)^{\ell(\alpha) -1} r^n \Kbar(\alpha,n) = \sum_{k = 0}^{n-1} (-1)^k r (r-1)^k = 1-(1-r)^n.\]
The formula for the eigenvalues now follows from Proposition~\ref{P:eigenvalues} and the diagonalizability follows from Theorem~\ref{T:eigen}. 
\end{proof}

\begin{remark}
If $r\ge 1$ and $\pi\in\perm_n$ then  $\QSf_{\vartheta_r}(\pi)$ is the probability that a deck of $n$ cards is in arrangement $\pi$ after performing an inverse $r$-weighted face-up face-down shuffle; that is, remove  a subset of cards from the deck to form a new packet, where a card is selected independently for removal with probability $1-\frac{1}{r}$ and the cards are kept in the same relative order, and then place the new packet face down on top of the pile of remaining cards.
\end{remark}

In \cite{NC2} it was show that the $A\mapsto (1-q)A$ transform on noncommutative symmetric functions is equivalent to the left action on the descent algebra $\Sol_n$ by 
\[ \eta_q = (1-q) \cdot \sum_{k=0}^{n-1} (-q)^k Y_{(1^k,n-k)}. \]
This action was shown to be diagonalizable and formulas for the eigenvalues and eigenvectors were given. The eigenvalues turn out to be the same as the eigenvalues for $\vartheta_{1-q}.$ This is explained by the following result.

\begin{proposition}
The restriction of $\vartheta_r$ to $\QSym_n$ is dual to the left action of $\eta_{1-r}$ on $\Sol_n$; that is, 
\[\pair{\eta_{1-r} \cdot W}{G} = \pair{W}{\vartheta_r(G)}\] for all $W\in \Sol_n$ and $G \in \QSym_n$. In other words, $(X^{\vartheta_r})_n = \eta_{1-r}.$
\end{proposition}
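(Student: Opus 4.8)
The plan is to prove the concrete reformulation $(X^{\vartheta_r})_n = \eta_{1-r}$; the displayed duality identity then follows immediately from Proposition~\ref{P:transpose}, since for $W\in\Sol_n$ the product $X^{\vartheta_r}\cdot W$, taken componentwise in $\dhat$, reduces to $(X^{\vartheta_r})_n\cdot W$, which again lies in $\Sol_n$, so that
\[\pair{\eta_{1-r}\cdot W}{G} \;=\; \pair{(X^{\vartheta_r})_n\cdot W}{G} \;=\; \pair{X^{\vartheta_r}\cdot W}{G} \;=\; \pair{W}{\vartheta_r(G)}\]
for every $G\in\QSym_n$.

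By \eqref{E:Xphi-c} we have $(X^{\vartheta_r})_n = \sum_{\alpha\in\Comp(n)} c_{\alpha,n}\,Y_\alpha$, so everything reduces to computing the coefficients $c_{\alpha,n} = \pair{X^{\vartheta_r}}{F_\alpha} = \pair{Y_n}{\vartheta_r(F_\alpha)}$. This pairing extracts the coefficient of $F_{(n)}$ in the fundamental-basis expansion of $\vartheta_r(F_\alpha)$. Transporting through the isomorphism $\gamma$ --- under which $F_\alpha$ corresponds to the $ab$-word $u$ with $S_u = S_\alpha$, and $F_{(n)}$ corresponds to $a^{n-1}$ --- this is precisely the coefficient of $a^{n-1}$ in $r\,\omega_r(u)$. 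But that coefficient was already computed in the proof of Proposition~\ref{Ehrenborg-conjectures}: it equals $r(r-1)^k$ when $u = b^k a^{n-1-k}$, i.e.\ when $\alpha = (1^k,n-k)$ with $0\le k\le n-1$, and it vanishes for all other $\alpha$. Hence $(X^{\vartheta_r})_n = \sum_{k=0}^{n-1} r(r-1)^k\,Y_{(1^k,n-k)}$, and this coincides with $\eta_{1-r} = (1-(1-r))\sum_{k=0}^{n-1}(-(1-r))^k\,Y_{(1^k,n-k)}$ once we use $(-(1-r))^k = (r-1)^k$.

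The argument is essentially a reassembly of earlier work, so there is no substantial obstacle; the one place that needs care is the identification $c_{\alpha,n} = \pair{Y_n}{\vartheta_r(F_\alpha)} =$ (the coefficient of $a^{n-1}$ in $r\,\omega_r(\gamma^{-1}(F_\alpha))$), where the length conventions for $ab$-words must be tracked, together with the bookkeeping $q = 1-r$, $(-q)^k = (r-1)^k$ that reconciles the two parametrizations. Alternatively one may read $c_{\alpha,n}$ off \eqref{E:Kthetar} via $c_{\alpha,n} = \eps_1^{\,n}\,\Kbar^{\vartheta_r}_n(\alpha,n)$ with $\eps_1 = 1-(1-r) = r$ when $r\ne 0$, and then extend to $r = 0$ by polynomiality in $r$; the route above sidesteps the normalization hypothesis altogether.
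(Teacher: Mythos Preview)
Your proof is correct and follows essentially the same approach as the paper: both identify $(X^{\vartheta_r})_n$ by reading off the coefficients $c_{\alpha,n}$ from the computation in the proof of Proposition~\ref{Ehrenborg-conjectures}, and then invoke Proposition~\ref{P:transpose} for the duality statement. Your version works directly with $c_{\alpha,n}$ rather than routing through $\Kbar(\alpha,n)$, which is slightly cleaner since it avoids the normalization by $\eps^n = r^n$ (and hence the side case $r=0$), but the argument is otherwise the same.
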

\begin{proof}
Using the formula for $\Kbar(\alpha,n)$ derived in the proof of Proposition~\ref{Ehrenborg-conjectures}, we have \[\eta_{1-r} = r \sum_{k=0}^{n-1} (r-1)^k Y_{(1^k, n-k)} = \sum_{\alpha\in\Comp(n)}\Kbar (\alpha,n) Y_\alpha =( X^{\vartheta_r})_n.\] Applying Proposition~\ref{P:transpose} completes the proof.  

This result may also be proven by constructing an explicit isomorphism from $\vartheta_r$ to the dual of $\eta_{1-r}$. Consider an $ab$-word upon which Ehrenborg's operator acts. Send each ``$a$'' to a descent and send each ``$b$'' to an ascent.  
Send $r$ to $1-q$.  Each ``$ab$" now corresponds to a valley, and for each set $S$, the allowable sets $T$ in the image are as follows.  Each valley must be
sent either to a valley or to a peak for a set to be allowable, but  this is the only requirement.  Now for each $T$, the coefficient $r^m(r-1)^n$ is obtained as follows.  The exponent $m$ counts valleys in $S$, just as the exponent for $1-q$ in $\eta_q$ does. The exponent $n$ counts  discrepancies between $S$ and $T$, with each valley/peak combination counted as a single discrepancy.  Thus, we obtain the following formula, which is the dual version of the formula for the operator $\eta_q$ proven in \cite[Proposition~5.41]{NC2}:
\[ 
\vartheta_{1-q} (F_\beta) = (1-q)^{hl(\beta)} \sum_{\Lambda (\beta) \subseteq S_\alpha \Delta (S_\alpha+1)}
(-q)^{b(S_\alpha,S_\beta)} F_{\alpha},\]
where $\beta$  can be written as a concatenation $\beta = \beta_1\cdots \beta_{hl(\beta )}$ of hook compositions $\beta_i = (1^k,l)$ and $b(S,T) = |(1 + (S\setminus T)) \cup (T \setminus S)|$.
\end{proof}

Given $r>1$, let $\delta_{r,1}, \delta_{r,2},\ldots$ be i.i.d.\  random variables taking on values in $\{1,-1\}$ such that
\[\prob(\delta_{r,i} = 1) = \frac{1}{r}, \;\;\;\text{ and } \;\;\;
 \prob(\delta_{r,i} = -1) = 1 - \frac{1}{r}.\]

\begin{theorem} \label{T:prob-q}
For all $\sigma,\tau \in \perm_n$ and $\beta\in\Comp(n)$, we have
\[
K^{\vartheta_r}_n(\sigma,\tau) = \prob(\st((\delta_{r,1} \sigma_1,\ldots, \delta_{r,n} \sigma_n)) = \tau)
\]
and
\[ \Kbar^{\vartheta_r}_n(\Des(\sigma),\beta) = \prob(\Des((\delta_{r,1} \sigma_1,\ldots, \delta_{r,n} \sigma_n)) = \beta). \]
\end{theorem}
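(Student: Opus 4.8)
The plan is to deduce both identities from the explicit formula \eqref{E:Kthetar} for $\Kbar^{\vartheta_r}_n(\cdot,n)$ together with a short count of sign patterns. Since $\eps=\eps_1=1-(1-r)=r>0$, equation \eqref{E:QSf-simplified} gives
\[
K^{\vartheta_r}_n(\sigma,\tau)=\QSf_{\vartheta_r}(\sigma\tau^{-1})=\Kbar^{\vartheta_r}_n(\Des(\sigma\tau^{-1}),n),
\]
so, writing $\pi:=\sigma\tau^{-1}$, \eqref{E:Kthetar} tells us that the left-hand side equals $(r-1)^k/r^{n-1}$ if $\Dset(\pi)=\{1,\dots,k\}$ (equivalently $\Des(\pi)=(1^k,n-k)$, equivalently the peak set of $\pi$ is empty), and equals $0$ otherwise. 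It thus remains to compute $\prob(\st((\delta_{r,1}\sigma_1,\dots,\delta_{r,n}\sigma_n))=\tau)$ and match.

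I would first transport the event from $\tau$ to $\pi^{-1}$, exactly as in the proof of Theorem~\ref{T:Kmatrix-ashuffle}. Rearranging the positions of the sequence $(\delta_{r,i}\sigma_i)_i$ by $\sigma^{-1}$ produces the sequence $(\delta_{r,\sigma^{-1}(v)}\,v)_v$, and standardization is equivariant under such positional rearrangements; using $\pi=\sigma\tau^{-1}$ one gets that $\st((\delta_{r,i}\sigma_i)_i)=\tau$ holds if and only if $\st((\delta_{r,\sigma^{-1}(v)}\,v)_v)=\pi^{-1}$. Since $\sigma^{-1}$ is a bijection and the $\delta_{r,i}$ are i.i.d., the vector $(\delta_{r,\sigma^{-1}(v)})_v$ has the same law as $(\delta_{r,v})_v$, so the probability in question equals $\prob(\st((\delta_{r,1},2\delta_{r,2},\dots,n\delta_{r,n}))=\pi^{-1})$.

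The core step is to describe which outcomes of $(\delta_{r,v})_v$ realize this last event. Let $A=\{v\in[n]:\delta_{r,v}=-1\}$ and $m=|A|$. Listing $[n]$ in increasing order of the key $v\,\delta_{r,v}$ gives the elements of $A$ in decreasing order followed by the elements of $[n]\setminus A$ in increasing order, and this list is $(\pi_1,\dots,\pi_n)$ precisely when $A=\{\pi_1,\dots,\pi_m\}$, $\pi_1>\cdots>\pi_m$, and $\pi_{m+1}<\cdots<\pi_n$. Unwinding these three conditions shows that a valid $m$ exists if and only if $\Dset(\pi)$ is an initial segment $\{1,\dots,k\}$ of $[n-1]$, in which case exactly the two values $m=k$ and $m=k+1$ work, corresponding to the two (distinct) sign patterns $A=\{\pi_1,\dots,\pi_k\}$ and $A=\{\pi_1,\dots,\pi_{k+1}\}$. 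Hence if the peak set of $\pi$ is nonempty the event has probability $0$, matching $\QSf_{\vartheta_r}(\pi)=0$; otherwise the event is the disjoint union of these two and
\[
\prob(\st((v\delta_{r,v})_v)=\pi^{-1})=\Bigl(1-\tfrac1r\Bigr)^{k}\Bigl(\tfrac1r\Bigr)^{n-k}+\Bigl(1-\tfrac1r\Bigr)^{k+1}\Bigl(\tfrac1r\Bigr)^{n-k-1}=\Bigl(1-\tfrac1r\Bigr)^{k}\Bigl(\tfrac1r\Bigr)^{n-k-1}=\frac{(r-1)^{k}}{r^{n-1}},
\]
which is exactly $K^{\vartheta_r}_n(\sigma,\tau)$ from the first step. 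This gives the formula for $K$. The formula for $\Kbar$ then follows by lumping: $\Kbar^{\vartheta_r}_n(\Des(\sigma),\beta)=\sum_{\tau:\,\Des(\tau)=\beta}K^{\vartheta_r}_n(\sigma,\tau)$, which by the $K$-formula is the probability that $\st((\delta_{r,i}\sigma_i)_i)$ has descent composition $\beta$, and since standardization preserves the descent set this equals $\prob(\Des((\delta_{r,1}\sigma_1,\dots,\delta_{r,n}\sigma_n))=\beta)$.

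I expect the only delicate point to be the bookkeeping in the core step: checking that a ``valley'' target $\pi$ admits exactly the two sign patterns above and that every other $\pi$ admits none. Everything else is formal, and the agreement with \eqref{E:Kthetar} reduces to the telescoping identity $(1-\tfrac1r)^{k}(\tfrac1r)^{n-k}+(1-\tfrac1r)^{k+1}(\tfrac1r)^{n-k-1}=(1-\tfrac1r)^{k}(\tfrac1r)^{n-k-1}$. Specializing to $r=2$, where each sign pattern has probability $2^{-n}$, recovers Stembridge's Theorem~3.6.
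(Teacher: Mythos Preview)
Your proof is correct and follows essentially the same route as the paper. Both arguments rest on identifying, for each target permutation, the sign patterns $(\delta_v)$ whose standardization produces it: you do this by fixing $\pi=\sigma\tau^{-1}$ and counting directly, finding exactly the two patterns $A=\{\pi_1,\dots,\pi_k\}$ and $A=\{\pi_1,\dots,\pi_{k+1}\}$ when $\Dset(\pi)=\{1,\dots,k\}$; the paper instead interprets each summand $Y_{(1^k,n-k)}$ in $(X^{\vartheta_r})_n$ as the sum over negation sets $A\subseteq[2,n]$ of size $k$, and then observes that negating the value $1$ does not change relative order, which is precisely why your two patterns (differing only in whether $\pi_{k+1}=1$ is negated) collapse to a single term. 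Your explicit telescoping $(1-\tfrac1r)^k(\tfrac1r)^{n-k}+(1-\tfrac1r)^{k+1}(\tfrac1r)^{n-k-1}=(r-1)^k/r^{n-1}$ is the probabilistic counterpart of that collapse.
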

\begin{proof}
Left multiplying $\sigma $ by $Y_{(1^k, n-k)}$ has the effect of negating all possible choices of $k-1$ values chosen from $[2,n]$ within the permutation $\sigma $, then standardizing values to obtain a permutation, since this negation reverses  relative order of the values being negated and makes them smaller than
all other values. Notice also that negating the value $1$ has no impact on the relative order of values. The sum 
\[\sum_{k=0}^{n-1} (r-1)^k Y_{(1^k, n-k)}\]
may therefore be viewed as a sum over all choices of which values in $[2,n]$ to negate, with $k$ recording the number of values other than $1$ being negated.  If each value in $[n]$ is independently negated with  probability $1- \frac{1}{r}$, then
\[\sum_{k=0}^{n-1} \frac{(r-1)^k(1)^{n-1-k}}{r^{n-1}} Y_{(1^k, n-k)}\]
is a sum over all possible subsets of $[n]$ to be negated, with each possibility  multiplied by its probability of being chosen.
\end{proof}

Setting $r=2$ in Theorem~\ref{T:prob-q} yields Theorem~3.6 of Stembridge \cite{Ste}.


\section{acknowledgments}
The authors thank Persi Diaconis and Phil Hanlon for helpful comments and suggestions.


\begin{thebibliography}{55}

\bibitem{ABN} M. Aguiar, N. Bergeron, K. Nyman, The peak algebra and descent algebras of type B and D, {\it Trans.\ Amer.\ Math.\ Soc.} {\bf 356} (2004), 2781--2824.

\bibitem{ABS} M. Aguiar, N. Bergeron, F. Sottile, Combinatorial Hopf algebras and generalized Dehn-Sommerville equations, {\it Compositio Mathematica}, {\bf 142} (2006), 1--30.

\bibitem{AM} M. Aguiar and S. Mahajan, Coxeter Groups and Hopf Algebras, {\it Fields Institute Monographs,} Volume 23 (2006), AMS, Providence, RI.

\bibitem{BD} D. Bayer and P. Diaconis, {\it Trailing the dovetail shuffle to its lair}, {\it Ann. Appl. Probab.}, {\bf 2} (1992), no.\ 2, 294--313.

\bibitem{BHR} Bidigare, P. Hanlon and D. Rockmore, A combinatorial description of the spectrum for the Tsetlin library and its generalization to hyperplane arrangements, {\it Duke Math. J.}, {\bf 99} (1999), no.\ 1, 135--174.

\bibitem{BHT} N. Bergeron, F. Hivert, J.-Y. Thibon, The peak algebra and the Hecke-Clifford algebras at $q=0$, {\it J.\  Combin.\ Theory, Series A}, {\bf 107} (2004) 1--19.

\bibitem{BER} L. J. Billera, R. Ehrenborg, and M. Readdy, The ${\bf c}$-$2{\bf d}$-index of oriented matroids, {\it J. Comb.\ Theory, Series A,} 80 (1997), 79--105.

\bibitem{BHW} L. J. Billera, S. K.  Hsiao, and S. van Willigenburg, Peak quasisymmetric functions and Eulerian enumeration, {\it Adv.\ Math.} {\bf 176} (2003), no.\ 2, 248--276.

\bibitem{BL} L. J. Billera and N. Liu, Noncommutative enumeration in graded posets, {\it J. Alg.\ Combin.} {\bf 12} (2000), no.\ 1, 7--24.

\bibitem{Brown}  K. S. Brown, Semigroups, Rings, and Markov Chains, {\it J.\ Theor.\ Prob.} {\bf 13} (2000), no.\ 3, 871--938.

\bibitem{Brown2} K. S. Brown, Semigroup and ring theoretical methods in probability, Representations of finite dimensional algebras and related topics in Lie theory and geometry, 3--26, {\it Fields Inst. Commun.}, {\bf 40}, Amer. Math. Soc., Providence, RI, 2004.

\bibitem{BrD} K. S. Brown and P. Diaconis, Random walks and hyperplane arrangements, {\it Ann.\ Probability} {\bf 26} (1998), 1813-1854.

\bibitem{CHS} A. R. Calderbank, P. Hanlon, and S. Sundaram, Representations of the symmetric group in deformations of the free Lie algebra, {\it Trans.\ Amer.\ Math.\ Soc.} {\bf 341} (1994), no.\ 1,  315--333.

\bibitem{Diaconis} P. Diaconis, Mathematical developments from the analysis of riffle shuffling, Groups, combinatorics and geometry (Durham, 2001), 73--97, World Sci. Publ., River Edge, NJ, 2003.

\bibitem{NC6} G. Duchamp, F. Hivert and J. Y. Thibon, Noncommutative symmetric functions VI: Free quasi-symmetric functions and related algebras, {\it Internat. J. Alg. Comput.} {\bf 12} (2002), no.\ 5, 671--717

\bibitem{E96} R. Ehrenborg, On posets and Hopf algebras, {\it Adv. Math.} {\bf 119} (1996), 1--25.

\bibitem{E04} R. Ehrenborg, The $r$-signed Birkhoff transform, preprint 2004.

\bibitem{ER} R. Ehrenborg and M. Readdy, The Tchebyshev  Transforms of the First and Second Kind, preprint 2007.

\bibitem{NC1} I. M. Gelfand, D. Krob, A. Lascoux, B. Leclerc, V. S. Retakh, and J.-Y. Thibon, {\it  Noncommutative symmetric functions,} Adv.\ Math.\ {\bf 112} (1995), no.\ 2, 218--348.
 
\bibitem{G} I.\ M.\ Gessel, Multipartite $P$-partitions and inner products of skew Schur functions, {\it Contemp.\ Math.} {\bf 34} (1984), 289--301.

\bibitem{Het} G. Hetyei, Tchebyshev posets, special issue of {\it Discrete comput.\ Geom.} in honor of Louis Billera's 60th birthday, {\bf 32} (2004), 493--520.

\bibitem{H} M. E. Hoffman, Quasi-shuffle products, {\it J.\ Alg.\ Combin.} {\bf 11} (2000), 49--68.

\bibitem{KS} Kemeny and Snell, {\it Finite Markov Chains, reprinting of the 1960 original}, Undergraduate texts in mathematics, Springer-Verlag, New York-Heidelberg, 1976, ix + 210 pp. 

\bibitem{NC2} D. Krob, B. Leclerc, and J.-Y. Thibon, Noncommutative symmetric functions II: Transformation of alphabets, {\it Internat. J. Alg. Comput.} {\bf 7} (1997), no.\ 2, 181--264.

\bibitem{Lo} J.-L. Loday, On the algebra of quasi-shuffles, {\it Manuscripta Math.} {\bf 123} (2007), no.\ 1, 79--93.

\bibitem{Mac} I. G. Macdonald, {\it Symmetric Functions and Hall Polynomials,} Second Edition, Oxford University Press, Oxford, 1995.

\bibitem{MR} C. Malvenuto and C. Reutenauer, Duality between quasi-symmetric functions and the Solomon descent algebra, {\it J. Alg.} {\bf 177} (1995), no.\ 3, 967--982.

\bibitem{N} K.L. Nyman, The peak algebra of the symmetric group, J. Algebraic Combin.\ {\bf 17} (3) (2003) 309Ð322.

\bibitem{R} C. Reutenauer, Free Lie Algebras, Oxford University Press, Oxford, 1993.

\bibitem{Sch} M. Schocker, The peak algebra of the symmetric group revisited, {\it Adv.\ Math.} {\bf 192} (2004),  259--309.

\bibitem{Sol} L. Solomon, A Mackey formula in the group ring of a Coxeter group, {\it J. Algebra} {\bf 41} (1976), no. 2, 255--264.

\bibitem{EC1} R. Stanley, {\it Enumerative Combinatorics, Vol.\ 1}, Cambridge Studies in Advanced Mathematics, Vol.\ 49, Cambridge University Press, Cambridge, UK, 1997.

\bibitem{EC2} R. Stanley, {\it Enumerative Combinatorics, Vol.\ 2}, Cambridge Studies in Advanced Mathematics, Vol.\ 62, Cambridge University Press, Cambridge, UK, 1999.

\bibitem{QS} R. Stanley, Generalized riffle shuffles and quasisymmetric functions, {\it Annals of Combinatorics}, {\bf 5} (2001), 479--491.

\bibitem{Ste} J. Stembridge, Enriched $P$-partitions, {\it Trans.\ Amer.\ Math.\ Soc.} {\bf 349} (1997), no.\ 2, 763--788.

\bibitem{Swe} M. Sweedler, {\it Hopf Algebra}, Mathematics Lecture Notes Series, 
W.A. Benjamin, Inc., New York 1969 vii+336 pp.

\end{thebibliography}
 \end{document}